\newtheorem{definition}{Definition}
\newtheorem{notation}[definition]{Notation}
\newtheorem{exa}[definition]{Example}
\newtheorem{lemma}[definition]{Lemma}
\newtheorem{proposition}[definition]{Proposition}
\newtheorem{theorem}[definition]{Theorem}
\newtheorem{corollary}[definition]{Corollary}
\newtheorem{conjecture}[definition]{Conjecture}
\newtheorem{remark}[definition]{Remark}
\numberwithin{equation}{section}
\newcommand\blfootnote[1]{%
  \begingroup
  \renewcommand\thefootnote{}\footnote{#1}%
  \addtocounter{footnote}{-1}%
  \endgroup
}
\def\iddots{\mathinner{\mkern1mu\raise\p@
\vbox{\kern7\p@\hbox{.}}\mkern2mu
\raise4\p@\hbox{.}\mkern2mu\raise7\p@\hbox{.}\mkern1mu}}
\def\cF{\mathcal{F}}
\def\cG{\mathcal{G}}
\def\cR{\mathcal{R}}
\def\fB{\mathfrak{B}}
\def\A{\mathbb{A}}
\def\C{\mathbb{C}}
\def\F{\mathbb{F}}
\def\K{\mathbb{K}}
\def\L{\mathbb{L}}
\def\N{\mathbb{N}}
\def\Q{\mathbb{Q}}
\def\Z{\mathbb{Z}}
\def\l{\left}
\def\r{\right}
\def\p{\prime}
\def\wtilde{\widetilde}
\def\ol{\overline}
\def\a{\alpha}
\def\Ga{\Gamma}
\def\Dt{D}
\def\GL{{\rm GL}}
\def\Aut{{\rm Aut}}
\DeclareMathOperator{\trdeg}{tr.deg}
\begin{document}

\begin{frontmatter}

\title{Differential transcendence of Bell numbers and relatives:    \\
a Galois theoretic approach\\
}

\author{Alin~Bostan}
\address{Inria, Universit\'e Paris-Saclay, \\ 1 rue Honor\'e d'Estienne d'Orves, 91120 Palaiseau, France}
\ead{alin.bostan@inria.fr}
\ead[url]{http://specfun.inria.fr/bostan}

\author{Lucia~Di Vizio}
\address{CNRS, Universit\'e Paris-Saclay, UVSQ, Laboratoire de mathématiques de Versailles, \\ 45 avenue des États-Unis, 78000 Versailles, France}
\ead{lucia.di-vizio@uvsq.fr}
\ead[url]{https://divizio.joomla.com}

\author{Kilian~Raschel}
\address{CNRS, Universit\'e Angers, Laboratoire Angevin de Recherche en Math\'ematiques,\\ 2 Boulevard Lavoisier, 49000 Angers, France}
\ead{raschel@math.cnrs.fr}
\ead[url]{https://raschel.perso.math.cnrs.fr/}

\begin{abstract}
\blfootnote{This project has received funding from the European Research
Council (ERC) under the European Union's Horizon 2020 research and innovation
programme under the Grant Agreement \href{http://combinepic.math.cnrs.fr/}{No
759702}, and from the ANR project
\href{https://mathexp.eu/DeRerumNatura/}{DeRerumNatura,
ANR-19-CE40-0018}.}In 2003 Klazar proved that the ordinary generating function of the sequence of Bell numbers is differentially transcendental over the field $\C(\{t\})$ of meromorphic functions at $0$.
We show that Klazar's result is an instance of a general phenomenon that
can be proven in a compact way using difference Galois theory.
We present the main principles of this theory in order to prove a general result about 
differential transcendence over $\C(\{t\})$, that we apply to many other (infinite classes
of) examples of generating functions, including as very special cases the ones
considered by~Klazar. 
Most of our examples belong to Sheffer's class, well studied notably in umbral calculus. 
They all bring concrete evidence in
support to the Pak-Yeliussizov conjecture, according to which a sequence whose
both ordinary and exponential generating functions satisfy nonlinear
differential equations with polynomial coefficients necessarily satisfies a
\emph{linear} recurrence with polynomial coefficients.
\end{abstract}

\begin{keyword} Combinatorial power series, differential transcendence, Galois
theory, Sheffer sequences, umbral calculus, Bell numbers, Bernoulli numbers,
Euler numbers, Genocchi numbers.

\bigskip \MSC[2010] 
11B68 
\sep  
05A40 
\sep  
12H05 
\sep  
33B15 
\sep  
33C45 
\sep
39A10  	
\sep  
30D30 
\end{keyword}
\date{\today}

\end{frontmatter}



\section{Introduction}\label{sec:intro}

In 
this paper we deal with what we will call 
the \emph{strong differential transcendence} of some power series
in~$\C[[t]]$, 
i.e., with their differential transcendence
over the field $\C(\{t\})$
of germs of meromorphic functions at~$0$. More precisely, we prove that the
solutions $f\in\C[[t]]$ of some first-order linear 
functional equations 
must be either rational, i.e., in $\C(t)$, or differentially transcendental over
$\C(\{t\})$, i.e., for any non-negative integer~$n$ and any polynomial
$P\in\C(\{t\})[X_0,X_1,\dots,X_n]$ we must have $P(f,f^\p,\dots,f^{(n)})\neq
0$, where $f^{(k)}$ denotes the $k$-th derivative of~$f$ with respect to $t$. 
We conclude in this way that many well-known power series in $\C[[t]]$ 
with a combinatorial origin are strongly differentially transcendental. 

\subsection*{State of the art}
We have come to consider this problem, influenced by three earlier works, by
Klazar~\cite{Klazar03}, by Pak~\cite{Pak18} and by Adamczewski, Dreyfus and
Hardouin~\cite{Adamczewski-Dreyfus-Hardouin}.

\smallskip 
First of all, in \cite{Klazar03}, Klazar considers the ordinary generating
function (OGF) of the Bell numbers $\phi(t)\coloneqq 1+\sum_{n\geq 1}\phi_nt^n$, where $\phi_n$
is the number of partitions of a set of cardinality $n\geq 1$, and proves that
$\phi(t)$ is differentially transcendental over $\C(\{t\})$.
To do so, he uses a functional equation satisfied by $\phi(t)$, namely:
\begin{equation}\label{eq:Bell-intro}
\phi\l(\frac{t}{1+t}\r)=t\phi(t)+1.
\end{equation}
A classical and important property of the Bell numbers is that their
\emph{exponential generating function} (EGF) 
\[\hat{\phi}(t) \coloneqq   1+\sum_{n\geq 1}\frac{\phi_n}{n!}t^n\] 
satisfies
\[
\hat{\phi}(t) =\exp(\exp t-1).
\]
As a consequence, $\hat{\phi}(t)$ is D-algebraic, meaning that it satisfies an algebraic differential equation over~$\Q(t)$
(or equivalently over $\C(t)$).
However, $\hat{\phi}(t)$ is not D-finite, that is, it does not satisfy any 
\emph{linear} differential equation with coefficients in~$\Q(t)$.             
This can be seen either analytically, using the asymptotics of $\phi_n$~\cite[Eq.~(5.47)]{Odlyzko95}, or algebraically, using~\cite{Singer86}
and the fact that the power series $\exp(t)$ is not algebraic.

\smallskip Secondly, starting from the example of the Bell numbers, Pak and Yeliussizov formulated the following ambitious conjecture as an ``advanced generalization of Klazar's theorem'':

\begin{conjecture}[{\cite[Open Problem 2.4]{Pak18}}]\label{conj:PY}
If for a sequence of rational numbers $(a_n)_{n \geq 0}$ both ordinary and
exponential generating functions
$\sum_{n\geq 0} a_n t^n$ and
$\sum_{n\geq 0} a_n \frac{t^n}{n!}$
are D-algebraic, then both are D-finite
(equivalently, $(a_n)_{n \geq 0}$ satisfies a linear recurrence with
polynomial coefficients in~$\Q[n]$).
\end{conjecture}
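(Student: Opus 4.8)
Since Conjecture~\ref{conj:PY} is open, the following is a strategy rather than a proof --- indeed it is the strategy this paper carries out in the restricted setting of first-order linear functional equations. \textbf{Step 1: reduce to a one-sided hypertranscendence statement.} The sequence $(a_n)_{n\ge 0}$ is P-recursive if and only if its OGF $F(t)=\sum_{n\ge 0}a_nt^n$ is D-finite, if and only if its EGF $\hat F(t)=\sum_{n\ge 0}a_nt^n/n!$ is D-finite: the substitution $a_n=n!\,b_n$ carries a polynomial recurrence for $(a_n)$ into one for $(b_n)$, because the factor $(n+k)!/n!$ that appears is a polynomial in $n$. Hence the conjecture is equivalent to the implication ``$\hat F$ D-algebraic and not D-finite $\Rightarrow$ $F$ not D-algebraic'', together with its mirror image ``$F$ D-algebraic and not D-finite $\Rightarrow$ $\hat F$ not D-algebraic''. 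The Bell numbers are the prototype: $\hat B(t)=\exp(\exp t-1)$ is D-algebraic and, $\exp t$ being transcendental, not D-finite, so here the conjecture is essentially Klazar's theorem~\cite{Klazar03}, which the method below even upgrades to the assertion that $B(t)$ is \SDT{}.

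\textbf{Step 2: produce a functional equation and run difference Galois theory.} On the D-algebraic side one tries to exhibit a first-order inhomogeneous linear functional equation $\phi\big(\sigma(f)\big)=a\,f+b$ with $a,b\in\C(\{t\})$, for $f\in\{F,\hat F\}$ and a suitable operator $\sigma$ on $\C(\{t\})$: a shift, a Mahler operator $t\mapsto t^{p}$, a $q$-dilation, or --- as for the Bell numbers with $\sigma\colon t\mapsto t/(1+t)$ --- a conjugate of one of these by a parabolic Möbius transformation. One then builds the \PV ring of this $\sigma$-equation over $\C(\{t\})$ and analyses its parametrised difference Galois group $G$, a linear differential algebraic group. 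An Adamczewski--Dreyfus--Hardouin-type dichotomy~\cite{Adamczewski-Dreyfus-Hardouin} then applies: either $G$ is ``small'', which forces the $\sigma$-equation to possess a rational solution and hence $f\in\C(t)$, or $G$ is ``large'', in which case the Galois correspondence rules out every nontrivial relation $P(f,f',\dots,f^{(n)})=0$ over $\C(\{t\})$, i.e.\ $f$ is \SDT{}. Concretely this requires bounding $\trdeg$ of the differential field generated by $f$ and verifying that the obstruction to a nonlinear algebraic differential equation --- essentially, whether $b$ (or a period/residue attached to it) is a $\sigma$-coboundary over $\C(\{t\})$ --- does not vanish; this non-vanishing is exactly what the transcendence of the coefficient $a$, inherited from a D-algebraic-but-not-D-finite partner, guarantees.

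\textbf{Step 3: the main obstacle.} Step~2 reaches only those $F$, $\hat F$ for which such a functional equation can actually be written down, and this is where the plan is incomplete. A generic $(a_n)$ whose OGF \emph{and} EGF are both D-algebraic need not satisfy any first-order --- or even any finite-order --- linear functional equation in any of the admissible operators, so there is no obvious entry point into the Galois machinery. Closing this gap seems to demand a genuine structure theory for D-algebraic power series whose Borel/Laplace partner is again D-algebraic: one would like to show that the \emph{double} D-algebraicity constraint confines $F$ to a controlled tower of exponential, logarithmic and algebraic extensions of $\C(t)$ from whose shape a linear functional equation --- hence Step~2 --- can be read off. Analytic input alone will not suffice, since each hypothesis separately yields only that $a_n$ has finite Gevrey order (a Maillet-type bound $|a_n|\le C^{\,n}(n!)^{s}$), which is far from P-recursiveness. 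The realistic intermediate target, and the one attained in this paper, is therefore to settle the conjecture for large structured families --- notably Sheffer sequences and their relatives, whose EGF is built from exponentials of algebraic functions (the umbral shape $A(t)\exp(H(t))$) and whose OGF does satisfy a Möbius-type functional equation amenable to Step~2 --- thereby supplying the ``concrete evidence'' announced in the abstract while leaving the general statement open.
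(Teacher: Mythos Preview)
The statement is a conjecture, and the paper does not prove it; you correctly flag this and present a strategy rather than a proof. Your three-step outline broadly matches the paper's actual programme: exhibit a first-order $\tau$-equation for the OGF whenever the EGF has an ``exponential'' shape (\S\ref{subsec:framework}), then invoke difference Galois theory (Theorem~\ref{thm:main-intro}) to obtain strong D-transcendence, and acknowledge that this only covers structured families such as the Sheffer examples. In that sense your proposal is an accurate summary of the paper's contribution and its limitations.

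A few technical points in your Step~2 are inaccurate and would mislead a reader. First, the functional equation the paper uses has coefficients $a,b\in\C(t)$, not merely in $\C(\{t\})$; rationality of the inhomogeneous term is essential to the pole-chasing argument in Lemma~\ref{lemma:inhomogenouus-ADH++}. Second, your sentence ``this non-vanishing is exactly what the transcendence of the coefficient $a$, inherited from a D-algebraic-but-not-D-finite partner, guarantees'' does not reflect the actual mechanism: $a$ is rational, and the obstruction is that $\sum_i\alpha_i\partial^i(\partial a/a)=\tau(g)-g$ has no solution $g\in\C(\{t\})$, which is proved by tracking $\tau$-orbits of poles and using that a convergent $g$ cannot have singularities accumulating at~$0$. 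The link to the EGF being D-algebraic-but-not-D-finite is indirect: that property is what produces the $\tau$-equation in the first place (via Lemma~\ref{lemma:Borel}), not what certifies the Galois-theoretic obstruction. Finally, in Step~1 the ``mirror image'' implication is not an additional requirement: since D-finiteness of the OGF and EGF are equivalent, either implication alone is equivalent to the conjecture.
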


Thirdly, a very recent work by Adamczewski, Dreyfus and Hardouin in difference Galois theory establishes the following general statement: 

\begin{theorem}[{\cite[Thm.~1.2]{Adamczewski-Dreyfus-Hardouin}}]
\label{thm:ADH}
Let $f\in\C((t))$ be a Laurent series satisfying a linear functional equation of the form
\begin{equation*}
   \alpha_0 y+\alpha_1\tau(y)+\dots+\alpha_n\tau^n(y)=0,
\end{equation*}
where $\alpha_i\in\C(t)$, not all zero, and $\tau$ is one of the following operators:
\begin{itemize}
  \item $\tau(f(t))=f\l(\frac{t}{1+t}\r)$;
  \item $\tau(f(t))=f\l(qt\r)$ for some $q\in\C^\ast$, not a root of unity;
  \item $\tau(f(t))=f\l(t^m\r)$ for some positive integer $m$.
\end{itemize}
Then  either $f\in\C(t^{1/r})$ for some positive integer $r$, or $f$ is D-transcendental over $\C(t)$.
Moreover, in the case of the first operator, $r$ is necessarily equal to $1$.
\end{theorem}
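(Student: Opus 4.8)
The plan is to recognise Theorem~\ref{thm:ADH} as an application of the \emph{parametrized} (or \emph{differential}) Galois theory of linear difference equations. The first step is to put the three operators on a common footing by choosing, in each case, a derivation of $\C(t)$ compatible with $\tau$: for $\tau(f(t))=f(t/(t+1))$ the substitution $u=1/t$ conjugates $\tau$ to the shift $u\mapsto u+1$, which commutes with $\partial=\frac{d}{du}=-t^{2}\frac{d}{dt}$; for $\tau(f(t))=f(qt)$ the Euler operator $\partial=t\frac{d}{dt}$ commutes with $\tau$; for $\tau(f(t))=f(t^{m})$ the same Euler operator satisfies instead the twisted relation $\partial\circ\tau=m\,(\tau\circ\partial)$. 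In all three cases the $\tau$-constants of $\C(t)$ form the field $\C$ (for the shift and for $\sigma_q$ with $q$ not a root of unity this is classical; for $t\mapsto t^{m}$ it is immediate), on which $\partial$ acts trivially, so $(\C(t),\tau,\partial)$ is a $\partial$-difference field with constant $\partial$-field $(\C,0)$ --- directly in the first two cases, and in the Mahler case after absorbing the factor $m$ or in the formalism of Dreyfus--Hardouin--Roques--Singer. Since $\partial$ and $\frac{d}{dt}$ differ by a unit of $\C(t)$, D-transcendence over $\C(t)$ is the same for both derivations, so it is harmless to work with $\partial$.

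The second step is the Galois-theoretic machinery. Reducing to the minimal equation one may assume $\alpha_0\alpha_n\neq 0$, form the companion system $\tau(Y)=AY$ with $A\in\GL_n(\C(t))$, and attach to it a $\partial$-Picard--Vessiot ring; its $\partial$-Galois group $G$ is a linear differential algebraic subgroup of $\GL_n$ over $(\C,0)$. The decisive input is the Hardouin--Singer dictionary: the solution $f$ is D-algebraic over $\C(t)$ if and only if the sub-quotient of $G$ that carries the coordinate of $f$ has differential dimension $0$ --- i.e.\ $G$ is ``$\partial$-small'' in the relevant direction. Assuming from now on that $f$ is D-algebraic, we record that $G$ is $\partial$-small.

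The heart of the proof is the classification of the possible $\partial$-small groups for these particular operators. Using the structure theory of differential algebraic subgroups of $\mathbb{G}_a$ and $\mathbb{G}_m$ over a $\partial$-field (Cassidy, Cassidy--Singer), $\partial$-smallness of $G$ forces the companion system to be, after possibly extending scalars to $\C(t^{1/r})$, an iterated extension of rank-one systems that are \emph{telescoping-trivial}. This reduces the whole question to elementary statements about $K=\C(t)$ and its finite extensions, namely: for which $r\in K^{\times}$ (resp.\ $r\in K$) are the equations $\tau(g)/g=r$ (resp.\ $\tau(g)-g=r$) solvable with $g\in\overline{K}$, together with the iterated refinements needed to peel off successive unipotent layers. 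For the shift operator these are residue and partial-fraction computations in $\C(u)$, and a key point is that the shift preserves no ramified extension $\C(u^{1/r})$ with $r>1$, which is exactly why $r=1$ in that case; for $\sigma_q$ and for $t\mapsto t^{m}$ the analogous computations in $\C(t)$ do involve ramification, whence the $\C(t^{1/r})$. Unwinding this, a D-algebraic $f$ has trivial $\partial$-Galois group over $\C(t^{1/r})$, so $f\in\C(t^{1/r})$ (with $r=1$ for the first operator); contrapositively, $f$ is either in $\C(t^{1/r})$ or D-transcendental over $\C(t)$, as claimed.

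The step I expect to be the main obstacle is the last one: turning the abstract hypothesis ``$G$ is $\partial$-small'' into the concrete conclusion ``$f$ is rational'' requires (i) controlling higher-order equations, not merely $\tau(y)=ay+b$, which brings in the unipotent radical of $G$ and the \emph{recursive} solvability of the (twisted) telescoping equations above, and (ii) dealing with the Mahler case, where the non-commutation $\partial\circ\tau=m\,(\tau\circ\partial)$ blocks a direct use of the $\partial$-difference formalism and must be circumvented (for instance via the Galois $D$-groupoid, or by rescaling $\partial$ over a ramified extension as in Dreyfus--Hardouin--Roques--Singer). The remaining ingredients --- the Picard--Vessiot construction, the Hardouin--Singer criterion and the residue computations --- are by now standard.
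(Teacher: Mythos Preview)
The paper does not contain a proof of this theorem. Theorem~\ref{thm:ADH} is quoted verbatim from \cite[Thm.~1.2]{Adamczewski-Dreyfus-Hardouin} as an external result, in the ``State of the art'' part of the introduction; the authors neither reprove it nor sketch its proof. What the paper \emph{does} prove is the distinct (and in one direction stronger) Theorem~\ref{thm:ADH++}: for the single operator $\tau(f)=f(t/(1+t))$ and for \emph{first-order} inhomogeneous equations only, the conclusion is upgraded from D-transcendence over $\C(t)$ to D-transcendence over $\C(\{t\})$. So there is no ``paper's own proof'' to compare your proposal against.

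That said, your sketch is broadly the right shape for the Adamczewski--Dreyfus--Hardouin argument (parametrized Picard--Vessiot theory, Hardouin--Singer type criteria, reduction to telescoping obstructions), and you correctly flag the two genuine difficulties: the passage from order~$1$ to arbitrary order, and the Mahler case where $\partial$ and $\tau$ only skew-commute. But as written it is a roadmap rather than a proof: the sentence ``$\partial$-smallness of $G$ forces the companion system to be \ldots\ an iterated extension of rank-one systems that are telescoping-trivial'' is precisely the content of the theorem and is not something that follows from Cassidy's classification alone---in \cite{Adamczewski-Dreyfus-Hardouin} this step requires a substantial new ingredient (a parametrized purity/specialization argument) that goes well beyond the order-$1$ telescoper criteria recalled in \S\ref{subsec:DiffTransc} of the present paper. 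If you intend this as a self-contained proof, that step is a genuine gap; if you intend it as a summary of \cite{Adamczewski-Dreyfus-Hardouin}, it is reasonable but should be labelled as such.
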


The juxtaposition of the three works above raises immediately three remarks.
The first one is that Klazar is concerned with the \emph{strong} differential
transcendence, i.e.\ over $\C(\{t\})$, while Pak and Yeliussizov's conjecture and
the main theorem of \cite{Adamczewski-Dreyfus-Hardouin} are concerned with the
differential transcendence over the field of rational functions. 
The second one is that Theorem~\ref{thm:ADH} would prove
Conjecture~\ref{conj:PY}
if we were able to rephrase the 
differential properties of the series $\sum_{n\geq 0} a_n \frac{t^n}{n!}$ in terms of a difference operator
acting on $\sum_{n\geq 0} a_n t^n$. 
The third one is
that we could not find in the literature any another functional equation of
the form \eqref{eq:Bell-intro}, satisfied by other generating functions of
combinatorial interest, with the exception of the generating function of the
Bernoulli numbers, considered in~\cite{Zagier98}, and of Fubini numbers, as in~\cite{Klazar-ICM06}. In this paper, we address
these three issues.

\subsection*{Combinatorial examples for the Pak-Yeliussizov conjecture}

We consider families of polynomials $(P_n(x))_{n \ge 0}$ in $\C[x]$,
with $\deg(P_n)=n$, and
whose exponential generating functions (EGFs) 
are defined in terms of
power series $u,v,f,g,h \in \C[[t]]$ by
\begin{equation}\label{eq:expGF}
	\sum_{n \ge 0} P_n(x) \frac{t^n}{n!} =u(x)\, v(t)\,
f\big(g(x)h(t)\big).
\end{equation}
A surprisingly large number of 
{classical families of polynomials fit}
into the framework of
Eq.~\eqref{eq:expGF}.
An important special case is provided by \emph{Sheffer sequences}, very
classical in umbral calculus~\cite[Ch.~2]{Roman1984}, whose EGFs have the form
\begin{equation}\label{eq:Sheffer}
\sum_{n \geq 0} {P_n(x)} \frac{t^n}{n!} =
v(t) \, {\rm e}^{x \, h(t)}.
\end{equation}

\begin{table*}[!!!t]
\[
\begin{array}{c|l|c|c}
&\text{polynomial $P_n(x)$}
& v(t) &  h(t) \\
\hline
\hline
&\textcolor{blue}{\text{Laguerre}\ L_n^\alpha(x)}
&
\textcolor{blue}{(1-t)^{-1-\alpha}} &
\textcolor{blue}{-t(1-t)^{-1}}\\

&\textcolor{blue}{\text{Hermite}\ H_n(x)}
&
\textcolor{blue}{{\exp(-t^2)}} &
\textcolor{blue}{2t}\\

\text{D-finite}
&\textcolor{blue}{\text{Mott}\ M_n(x)}
&
\textcolor{blue}{1} &
\textcolor{blue}{{({\sqrt{1-t^2}-1})/t}}  \\

\text{examples}
&\textcolor{blue}{\text{Bessel}\ p_n(x)}
&
\textcolor{blue}{1} &
\textcolor{blue}{1-\sqrt{1-2t}} \\

&\textcolor{blue}{\text{Falling factorial}\ (x)_n}
&
\textcolor{blue}{1} &
\textcolor{blue}{\log(1+t)} \\

\hline

&\textcolor{red}{\text{Euler}\  E_n^{(\alpha)}(x)}
&
\textcolor{red}{{2^\alpha ({e^t+1})^{-\alpha}}} & \textcolor{red}{t} \\

&\textcolor{red}{\text{Bernoulli}\  B_n^{(\alpha)}(x)}
&
\textcolor{red}{t^\alpha(e^t-1)^{-\alpha}} & \textcolor{red}{t} \\

\text{exponential}
&\textcolor{red}{\text{Bell-Touchard}\ \phi_n(x)}
 & \textcolor{red}{1} & \textcolor{red}{\exp(t)-1} \\

\text{functions}
&\textcolor{red}{\text{Mahler}\ s_n(x)}
 & \textcolor{red}{1} & \textcolor{red}{1+t-\exp(t)} \\

&\textcolor{red}{\text{Actuarial}\ a_n^{(\beta)}(x)}
& \textcolor{red}{\exp(\beta t)} & \textcolor{red}{1-\exp(t)}\\

\hline
&\text{Bernoulli, 2nd kind}\ b_n(x)
&{{t}/{\log(1+t)}} & \log(1+t)\\

&\text{Poisson-Charlier}\ c_n(x;a)
& \exp(-t) & \log(1+t/a)\\

&\text{Narumi}\ N^{(a)}_n(x)
& {t}^a{\log(1+t)}^{-a} & \log(1+t)\\

\text{logarithmic}
&\text{Peters}\ P^{(\lambda,\mu)}_n(x)
&  {(1+(1+t)^\lambda)^{-\mu}}& \log(1+t)\\

\text{functions}
&\text{Meixner-Pollaczek}\  P_n^{(\lambda)}(x;\phi){~~}
&
(1+t^2-2 t \cos \phi)^{-\lambda}
& i\, \log \left( \frac{1-t e^{i \phi}}{1-t e^{-i \phi}} \right)\\

&\text{Meixner}\ m_n(x;\beta,c)
& {(1-t)^{-\beta}} &  \log \left( \frac{1-t/c}{1-t} \right)\\

&\text{Krawtchouk}\ K_n(x;p,N)
& (1+t)^N &  \log \left( \frac{p-(1-p)t}{p(1+t)} \right)\\
\end{array}
\]
\vskip-0ex\caption{Examples of various families of polynomials $P_n(x)$ of the Sheffer type~\cite[Ch.~2]{Roman1984}, with the corresponding $v,h$
as in~\eqref{eq:Sheffer}.
The first set of entries corresponds to D-finite examples.
We focus on the second set of entries, also called \emph{exponential functions}~\citep[\S19.7]{ErMaObIr1955}.
The last entries are sometimes called \emph{logarithmic functions}, and are not covered by our methods.}
\label{Fig:M1}
\end{table*}

\noindent An important subclass is that of \emph{Appell polynomials}~\cite{Appell1880}, for which $h(t)=t$.
Other interesting examples (see Table~\ref{Fig:M1}) include some families of
classical orthogonal polynomials (Hermite, Laguerre, Bessel, \ldots), for
which both ordinary and exponential generating functions are D-algebraic,
since they are even D-finite (always over~$\C(t)$ unless we clearly state otherwise).
Even more interesting examples, for our purpose, are those for which D-finiteness does not hold: for
instance the Bell-Touchard and the Bernoulli polynomials.
This is a consequence of the fact that their EGFs possess an infinite number of complex singularities, which is incompatible with D-finiteness.
In these cases, a
natural question is whether the corresponding OGF
\[F(x,t) = \sum_{n \geq 0} {P_n(x)} t^n\]
can still be D-algebraic, at least when evaluated at special values $x_0\in\C$ of~$x$.

As Klazar in his D-transcendence proof
for the OGF of the Bell numbers~\cite{Klazar03}
(which correspond to the evaluation at $x=1$ of the Bell-Touchard polynomials
$\phi_n(x)$ in Table~\ref{Fig:M1}), we focus on the case
where
$F(x,t)$ satisfies a functional equation
of the form
\begin{equation}\label{eq:tau}
F\left(x,\frac{t}{1+t}\right) = R(x,t) \cdot F(x,t) + S(x,t),
\end{equation}
where $R$ and $S$ are non-zero rational functions in $\C(x,t)$.
In \S\ref{subsec:framework}, we explain a \emph{recipe} to deduce a functional
equation \emph{\`a la Klazar} from the closed form of the EGFs in
Table~\ref{Fig:M2} (more precisely, from the differential equations
with exponential coefficients that they satisfy).
Therefore, all our examples bring further evidence and
reinforce Conjecture~\ref{conj:PY}, thanks to Theorem~\ref{thm:ADH}. 
This is the first contribution of our paper.
To our knowledge, Klazar's examples of the Bell and of the (related)
Uppuluri-Carpenter numbers were the only known combinatorial examples on which
Conjecture~\ref{conj:PY} was proved prior to our work.

\subsection*{Main Galois theoretic result}

Our aim in this article is to demonstrate, using difference Galois theory,
that Klazar's result is a very particular instance of a general phenomenon.
To do so, we equip~$\C$ with the usual absolute value so that it makes sense to consider the field $\C(\{t\})$,
which coincides with the field of fractions of the ring of convergent series at $0$ with coefficients in~$\C$.
We can finally state the second contribution of this paper (see \S\ref{sec:main} below),
which generalizes (from D-transcendence to strong D-transcendence) the first instance of Theorem~\ref{thm:ADH}, in the case of first-order inhomogeneous difference equations:

\begin{theorem}\label{thm:main-intro}
Let $\alpha, \beta, \gamma, \delta \in \C$ such that 
$\alpha\delta-\beta\gamma\neq 0$ and
$(\delta-\alpha)^2+4\gamma\beta=0$.
Let $f\in \C((t))\setminus \C(t)$ be any solution of
\[
f \left(\frac{\beta + \alpha t}{\delta + \gamma t}\right) = a(t)  f(t) + r(t),
\]
where $a, r \in \C(t)\setminus \{ 0 \}$.
Then $f$ is differentially transcendental over $\C(\{t\})$.
\end{theorem}

This result will be proven in \S\ref{sec:main} in the particular case 
$\alpha = \gamma = \delta=1$ and $\beta = 0$, which corresponds to the map $\tau$ in \cref{thm:ADH}.
Note that the hypotheses on $\alpha,\beta,\gamma,\delta$ are equivalent to 
assuming that the homography $t\mapsto\frac{\beta + \alpha t}{\delta + \gamma t}$ has only one fixed point, 
therefore Theorem~\ref{thm:main-intro} 
is an easy consequence of this particular case, by a rational change of variable, as any homography with only one fixed point is conjugated to $\tau$. See Corollary~\ref{cor:ADH++} below.

The theorem above applied to the OGFs of
Table~\ref{Fig:M2}, including all the families obtained after appropriately specializing
$x$ and~$\lambda $, allows to immediately obtain their strong differential transcendence:

\begin{corollary}\label{coro:main-intro}
Let $f(t)$ be any $F(x,t)$ in Table~\ref{Fig:M2}
evaluated at some $x\in \C\setminus \{ 0 \}$ and $\lambda \in \C$. 
Then $f(t)$ is differentially transcendental over $\mathbb{C}(\{t\})$.
\end{corollary}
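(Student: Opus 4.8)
The plan is to derive, for each family $F(x,t)$ appearing in Table~\ref{Fig:M2}, a functional equation of the exact shape required by Theorem~\ref{thm:main-intro}, namely
\[
w\!\left(\frac{\alpha t}{1+\beta t}\right)=a(t)\,w(t)+f(t),\qquad a,f\in\C(t)\setminus\{0\},
\]
and then to simply invoke that theorem. So the corollary reduces entirely to producing, case by case, the right rational data $\alpha,\beta,a,f$. First I would recall (or set up in \S\ref{subsec:framework}) the recipe announced in the introduction: each EGF in the red/exponential block of Table~\ref{Fig:M1} is of Sheffer type $v(t)\,e^{x h(t)}$ with $h(t)$ built from $\exp$ (and $v(t)$ from $\exp$ as well), so the EGF satisfies a first-order linear ODE with coefficients that are polynomials in $e^{t}$; substituting $t\mapsto$ the appropriate Möbius transform conjugating the shift $e^{t}\mapsto e^{t}\cdot(\text{const})$ — equivalently using the substitution $t/(1+t)$ which on the OGF side implements the shift coming from $n\mapsto n\pm 1$ — turns that ODE into a difference equation of the stated first-order inhomogeneous form. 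For the Bell--Touchard case this is exactly Klazar's equation $B(t/(1+t))=tB(t)+1$, i.e.\ $\alpha=\beta=1$, $a(t)=t$, $f(t)=1$ after evaluating $x=1$; for general $x$ one gets $a(t)=x t$ or a similar rational modification, still nonzero.

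The key steps, in order, are: (1) fix notation for Table~\ref{Fig:M2}, listing for each family the OGF $F(x,t)=\sum_n P_n(x)t^n$ and the evaluation parameters $x,\gamma\in\C\setminus\{0\}$; (2) for each family, write down the functional equation \eqref{eq:tau} with explicit $R(x,t),S(x,t)\in\C(x,t)$ — this is the computational heart, but it is the ``recipe'' already advertised, so I would present the equations and defer verification to a short direct check (differentiate/shift the closed form of the EGF and compare); (3) evaluate at the chosen $x,\gamma$, checking that after this evaluation $R$ and $S$ specialize to \emph{nonzero} rational functions of $t$ alone (the nonvanishing is why we must exclude $x=0$, $\gamma=0$); (4) observe that $w(t):=F(x,t)\in\C[[t]]\subset\C((t))$ and that $w\notin\C(t)$ — this last point needs a word: $w$ rational would force the OGF to be D-finite, but the EGFs in question have infinitely many complex singularities hence are not D-finite, and (modulo the standard transfer between OGF and EGF, or directly from the growth of $P_n(x)$) the OGF is not rational either; (5) apply Theorem~\ref{thm:main-intro} to conclude that each such $w$ is differentially transcendental over $\C(\{t\})$.

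The main obstacle is step (2)–(3): producing the functional equation uniformly and verifying that the specialization keeps $R,S$ nonzero. The passage from ``EGF satisfies a linear ODE with $\exp$-coefficients'' to ``OGF satisfies \eqref{eq:tau}'' is the one genuinely non-formal ingredient — it is essentially a Borel/Laplace-type correspondence between $t\partial_t$ on the OGF and multiplication by $t$ combined with the substitution $t\mapsto t/(1+t)$ — and it must be carried out correctly for each of the families (Euler $E_n^{(\alpha)}$, Bernoulli $B_n^{(\alpha)}$, Bell--Touchard $\phi_n$, Mahler $s_n$, actuarial $a_n^{(\beta)}$, and whatever further entries Table~\ref{Fig:M2} adds). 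A secondary, lighter obstacle is the ``$w\notin\C(t)$'' verification in step (4): one should make sure the chosen evaluation $x,\gamma$ does not accidentally trivialize $P_n(x)$ to a polynomially-recursive sequence; excluding $0$ suffices in all listed cases, and this can be seen from the location of the singularities of the EGF $v(t)e^{xh(t)}$, which for $x\neq0$ inherits the infinitely many singularities of $h$ (or of $v$), ruling out D-finiteness and a fortiori rationality of the OGF.
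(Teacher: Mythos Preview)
Your approach is exactly the paper's: there is no explicit proof of this corollary in the paper, which simply asserts it follows from Theorem~\ref{thm:main-intro} applied to the functional equations already tabulated in Table~\ref{Fig:M2} (the columns $R$ and $S$ there are precisely your step~(2), produced via the recipe of \S\ref{subsec:framework}). Your steps~(3)--(4) make explicit two verifications the paper leaves entirely implicit.

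One refinement on step~(4): your singularities argument for $w\notin\C(t)$ does not cover the Bell--Touchard, Mahler, and actuarial rows, whose EGFs are \emph{entire} (both $v$ and $h$ are entire there, so nothing is ``inherited''). For those families you must use your parenthetical growth alternative: for $x\neq 0$ the coefficients $P_n(x)$ grow super-exponentially, so the OGF has radius of convergence~$0$ and cannot lie in $\C(t)$, nor even in $\C(\{t\})$. Also, in step~(3), be aware that excluding $x,\gamma=0$ is not always quite sufficient: e.g.\ the Fubini entry has $R,S$ with denominator $x+1$, and the Carlitz entry has $S$ with numerator $1-\gamma$; at such degenerate specializations the functional equation collapses (and indeed the OGF can become rational, as Fubini at $x=-1$ gives $e^{-t}$ for the EGF). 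The paper's table and corollary implicitly exclude these isolated values.
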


Note that the assumption $x\in \C\setminus \{ 0 \}$ is only necessary for the last four entries 
of \cref{Fig:M2}.
Indeed, in the other cases the EGF is non D-finite for any $x \in \mathbb{C}$, hence the OGF as well, and therefore the OGF is strongly D-transcendental by \cref{thm:main-intro}.

\begin{table*}[!!!t]
\[
\renewcommand{\arraystretch}{1.5}
\begin{array}{l|c|c|c|c}
 \text{polynomial} \ P_n(x) &\text{EGF} \ \sum_{n\geq 0}P_n(x) \frac{t^n}{n!}
&  R & S &  \text{ref.} \\

\hline
\hline
{\text{Bernoulli}\  B_n(x)} &
\frac{t}{{\rm e}^{t}-1} \cdot \exp \left( x t\right)
& t+1 &  -{\frac { t \, \left( t+1 \right) }{ \left( t+1-xt \right) ^{2}}} 
& \text{\cite{Apostol08}}
\\     

{\text{Glaisher}\  U_n(x)} &
\frac{t}{{\rm e}^{t}+1} \cdot \exp \left( x t\right)
& -(t+1) &  {\frac { t \, \left( t+1 \right) }{ \left( t+1-xt  \right) ^{2}}} 
& \text{\cite[\S230,\S234]{Glaisher1898}}
\\

{\text{Apostol-Bernoulli}\  A_n^{(\lambda)}(x)} &
\frac{t}{\lambda \,{\rm e}^{t}-1} \cdot \exp \left( x t\right)
& \lambda \, (t+1) &  -{\frac { t \, \left( t+1 \right) }{ \left( t+1-xt  \right) ^{2}}} 
& \text{\cite{Apostol51}}
\\     

{\text{Imschenetzky}\  S_n(x)} &
\frac{t}{{\rm e}^{t}-1} \cdot \left(\exp(xt)-1\right)
& t+1 &  {\frac {x{t}^{2} \left( xt-2\,t-2 \right) }{ \left( t+1 \right)  \left( t+1-xt \right) ^{2}}} 
& \text{\cite{Imschenetzky1883},} 
\\
&&&& \text{\cite[\rm{p.~}254, (38)]{ErMaObIr1955}} 
\\

\hline

{\text{Euler}\  E_n(x)} & \frac{2}{{\rm e}^{t}+1} \cdot \exp \left( x t\right)
& -(t+1) & {\frac {2(t+1)}{t+1-xt}}
& \text{\cite{Carlitz58}} 
\\  

{\text{Genocchi}\  G_n(x)} & \frac{2 \, t}{{\rm e}^{t}+1} \cdot \exp \left( x t\right)
& -(t+1) & {\frac {2\, t\, \left( t+1 \right) }{ \left( t+1-xt \right) ^{2}}}
& \text{\cite{Horadam91}}
\\  
   
{\text{Carlitz}\ C_n^{(\lambda)}(x)}  & \frac{1-\lambda}{1-\lambda \, {\rm e}^{t}} \cdot \exp \left( x t\right)    
& \lambda \, (t+1) & {\frac {(1-\lambda)(t+1)}{t+1-xt}}
& \text{\cite{Carlitz53, Carlitz62}}
\\

{\text{Fubini}\ F_n(x)} & 1/(1-x({\rm e}^t-1))
& {\frac {x }{x+1}} \cdot \left( t+1 \right) & \frac{1}{x+1}
& \text{\cite{Tanny75}} 
\\

\hline
{\text{Bell-Touchard}\ \phi_n(x)} & \exp \left( x ( {\rm e}^{t}-1 )\right)
& xt & 1
& \text{\cite{Bell34,Touchard56}} 
\\

\hline

{\text{Mahler}\ s_n(x)} & \exp \left( x ( t + 1 - {\rm e}^{t} )\right)
& {\frac {x t \left( t+1 \right) }{xt-t-1}} & {\frac {t+1}{t+1-xt}}
& \text{\cite{Mahler32},}
\\
&&&& \text{\cite[\rm{p.~}254, (40)]{ErMaObIr1955}}
\\

{\text{Toscano's actuarial}\ a_n^{(\lambda )}(x)} &  \exp \left( {-x{{\rm e}^{t}}+\lambda  t+x}\right) 
&  {\frac {x t \left( t+1 \right) }{\lambda  t-t-1}}  & {\frac {t+1}{t+1-\lambda  t}}
& \text{\cite{Toscano50}, \cite[p.~42]{BoBu64}}
\\
\end{array}
\]
\vskip-0ex\caption{
Examples of power series $F(x,t) = \sum_{n\geq 0}P_n(x) {t^n}\in \C [x][[t]]$ with D-algebraic exponential generating functions (EGF) and satisfying first-order difference equations of the form 
$F \bigl(x, {\frac {t}{t+1}} \bigr) = R(x,t) \cdot F(x,t) + S(x,t)$, for some 
rational functions $R,S \in \C (x,t)$.
In the cases of Apostol-Bernoulli, Carlitz and Toscano, $\lambda $ is assumed to be a fixed parameter in $\C$.}
\label{Fig:M2}
\end{table*}                   
                   
\smallskip In particular, we deduce the strong D-transcendence of interesting combinatorial OGFs, among which the two main examples in Klazar's paper
(see \S\ref{subsec:combinat} for a few other combinatorial examples): 

\begin{itemize}[topsep=0pt,itemsep=-1ex,partopsep=1ex,parsep=1ex]
  \item the OGF of the \emph{Bell numbers}~\cite[Prop.~3.3]{Klazar03}  
(\href{https://oeis.org/A000110}{A000110})
\[
\sum_{n\geq 0} \phi_n(1) t^n = 
1+t+2\,{t}^{2}+5\,{t}^{3}+15\,{t}^{4}+52\,{t}^{5}+203\,{t}^{6}+\cdots
\]
   
 \item the OGF of the
\emph{Uppuluri-Carpenter numbers}~\cite[Thm.~3.5]{Klazar03}
(\href{https://oeis.org/A000587}{A000587})
\[
\sum_{n\geq 0} \phi_n(-1) t^n = 
1-t+{t}^{3}+{t}^{4}-2\,{t}^{5}-9\,{t}^{6}-9\,{t}^{7}+50\,{t}^{8}+\cdots
\]  

 \item the OGF of the
\emph{bicolored partitions}~\cite[Tab.~2]{BaBoDeFlGaGo02}     
(\href{https://oeis.org/A001861}{A001861}) 
\[
\sum_{n\geq 0} \phi_n(2) t^n = 
1+2\,t+6\,{t}^{2}+22\,{t}^{3}+94\,{t}^{4}+454\,{t}^{5}+
\cdots
\]

  \item the OGF  of the
number of \emph{set partitions without singletons} (\href{https://oeis.org/A000296}{A000296})
\[ 
\sum_{n\geq 0} s_n(-1) t^n = 
1+{t}^{2}+{t}^{3}+4\,{t}^{4}+11\,{t}^{5}+41\,{t}^{6}+162\,{t}^{7}+715\,{t}^{8}+\cdots
\]
         
 \item the OGF of the
\emph{Genocchi numbers}~\cite{Dumont74}
(\href{https://oeis.org/A001469}{A001469})
\[
\sum_{n\geq 0} G_n(1) t^n = 
t+{t}^{2}-{t}^{4}+3\,{t}^{6}-17\,{t}^{8}+155\,{t}^{10}-\cdots
\]  

\item \label{surj-nbs} the OGF of the {\emph{surjection numbers}} (also, \emph{preferential arrangements})~\cite[p.~109]{FlSe09} (\href{https://oeis.org/A000670}{A000670})  
\[
\sum_{n\geq 0} F_n(1) t^n = 
1+{t}+3\, {t}^{2}+13\,{t}^{3}+75\,{t}^{4}+541\,{t}^{5}+\cdots
\]                  
\end{itemize}

While Theorem~\ref{thm:main-intro} and Corollary~\ref{coro:main-intro} demonstrate that Klazar's theorem is an instance of a general phenomenon, they raise more questions than they answer. 
First, it is striking that so many concrete combinatorial objects are enumerated by strongly differentially transcendental functions.
Secondly, although there is a huge gap between the D-transcendental and the strongly differentially transcendental classes, Theorem~\ref{thm:main-intro}
and Corollary~\ref{coro:main-intro} show that their intersections with solutions of difference equations of order 1 coincide.
Thirdly, it is natural to inquire whether an extension of Conjecture~\ref{conj:PY} might hold with \emph{differentially algebraic
over~$\Q(t)$} replaced by \emph{differentially algebraic over~$\C(\{t\})$}.
We feel that this paper should provide a motivation to look
further into these questions.
\par
We should also point out that the (strong) D-transcendence of very natural combinatorial examples
such as 
the OGF of labeled rooted trees $\sum_{n\geq 1} n^{n-1} t^n$,
or the OGF of the logarithmic functions in Table~\ref{Fig:M1},
does not fit into our framework, and actually escapes for the moment any other attempt of proof. 
Three other challenging examples are presented in~\S\ref{ssec:challenges}.
\par
Finally, let us mention that there are several interesting examples satisfying higher order linear $\tau$-equations or linear $q$-difference equations, some of which arise from combinatorics of lattice walks~\cite{BeBoRa16,BeBoRa17,DHRS0,DHRS1,HaSo20}. We plan to consider them in subsequent publications under the viewpoint of the Pak-Yeliussizov conjecture, together with the differential transcendence with respect to the parameters $x$ and~$\lambda $ of the examples in Table~\ref{Fig:M2}.
We expect that similar techniques will allow us to obtain more general results, such as the algebraic-differential independence of
the families of power series in Table~\ref{Fig:M2}.

\subsection*{Content of the paper}

In Section~\ref{sec:setting}, we explain how to deduce a functional 
equation from a closed form of an exponential type such as the EGFs in Table~\ref{Fig:M2}. 
As a corollary, we find functional equations satisfied by 
several combinatorial examples, on which Theorem~\ref{thm:main-intro} 
will be applied. 
\par
Section~\ref{sec:DtransGalois}, and in particular \S\ref{subsec:Galois}, may be considered as a
quick and gentle
introduction to the parameterized Galois theory of difference equations,
starting from the point of view of the usual Galois theory of difference
equations.
In \S\ref{subsec:DiffTransc} we have included some proofs, since we have stated some results in the exact form
needed to prove Theorem~\ref{thm:main-intro}. 
They are quite similar to some statements in 
\cite{HardouinSinger}, but under a weaker assumption on the field of 
constants, necessary in our applications.
Besides, we have tried to provide a user-friendly exposition, 
avoiding as much as possible the use of the more sophisticated parameterized Galois theory, since we do not want to restrict the audience of the paper to specialists.
\par
Theorem~\ref{thm:main-intro} is our
main technical result. 
Its proof is given in Section~\ref{sec:main}, and it
relies on results from Section~\ref{sec:DtransGalois}.

\subsection*{Acknowledgments}
We thank Marni Mishna and Michael Singer 
for having suggested some additional bibliographic pointers.
We are grateful to Michael Singer for the enthusiasm that he showed for 
the questions tackled in this paper: this 
is encouraging for us to continue further in this direction. We also thank Christopher Chiu, Martin Klazar and Sergey Yurkevich for interesting comments on the first version of the manuscript.
We warmly thank the two anonymous referees for their detailed reports, their very interesting 
comments and their generous suggestions.


\section{From exponential generating functions to difference equations: setting and examples}
\label{sec:setting}

\subsection{From exponential generating functions to \texorpdfstring{$\tau$}{tau}-equations}
\label{subsec:framework}

Let $C$ be a field of characteristic zero. 
As before, we consider the substitution map $\tau:f(t)\mapsto f\l(\frac{t}{1+t}\r)$, with compositional inverse $\tau^{-1}:f(t)\mapsto f\l(\frac{t}{1-t}\r)$,
and 
the derivation  
$\Dt\coloneqq  \frac{d}{dt} : f(t) \mapsto f'(t)$. 
It is not difficult to see that $\tau$ 
defines an automorphism of $C((t))$, $C(\{t\})$ and $C(t)$.
We will informally call \emph{difference $\tau$-equation}, or simply \emph{$\tau$-equation}, a linear functional equation with respect to $\tau$.
\par
We want to tackle the relation between the closed forms of the EGFs of Table~\ref{Fig:M2} and the existence of a difference $\tau$-equation for the
corresponding OGFs.
First of all, we recall the definition of the formal Borel transform $\fB:C[[t]]\to C[[t]]$:
\[
g\coloneqq  \sum_{n\geq 0} g_n t^n \; \mapsto \; \hat g\coloneqq  \fB(g)=\sum_{n\geq 0} g_n \frac{t^n}{n!}.
\]
Moreover, we consider the Euler-type transform  $\Phi  : C[[t]]
\rightarrow C[[t]]$ defined by:
\[
f(t) \mapsto (\Phi  f)(t) \coloneqq  
\frac{1}{1-t}\cdot f\left( \frac{t}{1-t}\right).
\]
Note for later use that, for all $j \in \Z$, 
\begin{equation} \label{eq:iter}
	\Phi^j(f(t)) = \frac{1}{1-jt}\cdot f\left( \frac{t}{1-jt}\right)
	\qquad \text{and} \qquad
	\tau^j(f(t)) = f\left( \frac{t}{1+jt}\right).
\end{equation}

The maps $\fB$ and $\Phi $ intertwine in an interesting way, described in \cref{lemma:Borel} below,
reminiscent of the formal Fourier transform.
Although not stated in this generality, (variants of) this result can be found in various papers from various areas of mathematics and computer science, such as~\cite{Gould90,FlRi92,Schmid92,Prodinger94,FlSe95}. For instance, part \ref{it:(a)} is a consequence of Theorems~1 and 2 in \cite{Gould90}, which relate the coefficients of $f$ and those of $\Phi(f)$ via a binomial (Euler-type) transform. Section~1 in~\cite{FlSe95} gives a slight variation of~\ref{it:(a)}, while~\cite{Prodinger94} gives a generalization of~\ref{it:(a)} to transforms of the form $f(t) \mapsto \frac{1}{1-bt} \cdot f\left(\frac{ct}{1-bt} \right)$. The (inverse) Borel transform is used in~\cite{FlRi92,Schmid92} for the analysis of some ``digital trees'' and ``tree algorithms'', in order to solve different types of functional equations similar to ours (see Lemmas~1 and 2 in~\cite{FlRi92} and Eqs. (2.5) and (2.8) in~\cite{Schmid92}).

\begin{lemma}    \label{lemma:Borel}
For any $f,g\in C[[t]]$, we have:
\begin{enumerate}[label={\rm(\alph{*})},ref=(\alph{*})]
  \item\label{it:(a)}$\Phi (f)=g$ if and only if 
   $\fB(g) =  \fB(f) \cdot e^t$;
  \item\label{it:(b)}$\frac{d}{dt}(\fB(f))=\fB\l(\frac{f(t)-f(0)}{t}\r)$;
  \item\label{it:(c)}
  for any $j,k\in \N$, the following commutation rule holds
  \[
  e^{jt} \circ D^k \circ \fB = \fB \circ \Phi^j \circ \Delta^k,
  \]
where $\Delta : C[[t]] \rightarrow C[[t]]$ is the divided difference operator 
$ f(t) \mapsto (\Delta  f)(t) \coloneqq  
\frac{f(t) - f(0)}{t}.$
\end{enumerate}
\end{lemma}

\begin{proof}
Note that \ref{it:(a)} is equivalent to \ref{it:(c)} for the particular choice $(j,k) = (1,0)$ and that \ref{it:(b)} is equivalent to~\ref{it:(c)} for the particular choice $(j,k) = (0,1)$. Moreover, an easy induction on $j$ and $k$ shows that \ref{it:(c)} follows from an iteration of \ref{it:(a)} and \ref{it:(b)}.
It is therefore sufficient to prove~\ref{it:(a)} and \ref{it:(b)}. 
By linearity, it is enough to show that, for any $m\geq 0$, we have
  \begin{equation} \label{eq:tm}
  e^{t} \cdot (\fB (t^m)) = \fB(\Phi(t^m)) \quad \text{and} \quad
  D (\fB (t^m)) = \fB(\Delta(t^m)).
  \end{equation}
The second part of \cref{eq:tm} is straightforward, since both terms are clearly equal to $t^{m-1}/(m-1)!$. 
It remains to prove the first part of \cref{eq:tm}. Using the binomial theorem, its right-hand side is equal to
\[
\fB\left( \frac{1}{1-t} \cdot \left( \frac{t}{1-t}\right)^{m} \right)
=
\fB\left( t^{m} \cdot \sum_{n \geq 0} \binom{m+n}{n} t^n \right)
=
\sum_{n \geq 0} \frac{ t^{m+n}}{n! m!}
=
e^{t} \cdot \frac{t^{m}}{m!},
\]
and it is thus equal to its left-hand side.
This finishes the proof of the lemma.
\end{proof}

Note that \cref{lemma:Borel}\ref{it:(c)} also holds for non-integer values of $j$, by taking~\cref{eq:iter} as definition of $\Phi^j$. This simple remark will be used in some examples of~\cref{subsec:combinat}.

\cref{lemma:Borel} will be our main tool to transform any linear differential equation with polynomial coefficients in $\exp(t)$ satisfied by $\hat{f}=\fB(f)$ into a linear $\Phi$-difference equation satisfied by $f$.
Then, using the rule (consequence of~\eqref{eq:iter})
\begin{equation} \label{eq:comm_Phi_tau}
\tau^d \circ \Phi^j = \frac{1+dt}{1+(d-j)t} \circ \tau^{d-j}, \quad \text{for all} \; j, d \in \Z
\end{equation}
will allow us to transform the latter $\Phi$-difference equation into a $\tau$-difference equation satisfied by $f$.
Let us first illustrate this process on an example.

\begin{exa}
The case of Bell numbers, considered by Klazar and mentioned in \eqref{eq:Bell-intro}, coincides with the Bell-Touchard polynomials $\phi_n(x)$ in Table~\ref{Fig:M2} evaluated at $x=1$.
The associated EGF is $\hat{\phi}(t) = \exp({\rm e}^t -1)$, which satisfies
\[
\Dt(\hat{\phi})-e^t \cdot \hat{\phi}=0,\
\] 
thus $\fB \left( \frac{\phi-1}{t} \right) = \fB \left( \Phi (\phi) \right)$, by~\cref{lemma:Borel}\ref{it:(a)} and \cref{lemma:Borel}\ref{it:(b)}. 
Therefore, $\phi$ satisfies the first-order $\Phi$-equation
\[ \Phi (\phi) = \frac{\phi-1}{t}  . \]
Now, applying $\tau$ to both sides of this equation, and using \cref{eq:comm_Phi_tau} with $d=j=1$, we find
\[  (t+1) \phi(t) = \tau \left( \frac{\phi-1}{t} \right)  . \]
Since on the other hand
$\tau \left( \frac{\phi-1}{t} \right) =  \tau \left( \frac{1}{t} \right) \cdot \tau (\phi - 1) =
\frac{t+1}{t}  \cdot  \left( \tau (\phi) - 1 \right)$,
we finally conclude that $\phi$ satisfies the first-order $\tau$-equation
$\tau(\phi) = t\phi+1$, as expected. 
This provides an alternative proof to~\cite[Prop.~2.1]{Klazar03}.
\end{exa}

In general, an iteration of the same argument allows to show: 

\begin{proposition}\label{prop:diffeqtotau}
Let $f\in C[[t]]$. If $\hat{f} = \fB(f)$ satisfies a linear differential equation of order $r$, of the form
\begin{equation}\label{eq:deqexp}
a_0(e^t)\hat f+a_1(e^t)\Dt(\hat{f})+\dots+a_r(e^t)\Dt^r(\hat{f}) = 0,
\end{equation}
with $a_0,\dots,a_r\in C[t]$ of degree at most $d$, 
then $f$ satisfies a linear inhomogeneous difference $\tau$-equation of order
at most~$d$, with coefficients in~$C[t]$ of degree at most $d+r$.
\end{proposition}
\begin{proof}
Setting $a_i(t) = \sum_{j=0}^d a_{i,j} t^j$ with $a_{i,j} \in C$, \cref{eq:deqexp} writes 
$
\sum_{i=0}^r \sum_{j=0}^d a_{i,j} e^{jt} D^i (\fB(f)) = 0.
$
By \cref{lemma:Borel}, this implies 
$
\sum_{i=0}^r \sum_{j=0}^d a_{i,j} \Phi^j (\Delta^i(f)) = 0.
$
Applying $\tau^d$ to this last equation, and using~\eqref{eq:comm_Phi_tau}, yields
\begin{equation}\label{eq:tau-delta}
\sum_{j=0}^d  \sum_{i=0}^r  a_{i,d-j} \cdot \frac{\tau^j (\Delta^i(f))}{1+jt} = 0.
\end{equation}
Now, for $i>0$, each term $\tau^j (\Delta^i(f))$ in the above sum is equal to
\[
\Delta^i(f) \left(\frac{t}{1+jt} \right)
=
 \frac{f \left(\frac{t}{1+jt} \right) - R_{i,j} \left(\frac{t}{1+jt} \right)}{\left(\frac{t}{1+jt} \right)^i}
=
\frac{(1+jt)^{i}}{t^i} \cdot \left({ \tau^j(f) - R_{i,j} \left(\frac{t}{1+jt} \right)} \right),
\]
where $R_{i,j}\in C[t]$ is a polynomial of degree less than $i$.
Therefore, multiplying~\eqref{eq:tau-delta} by~$t^r \prod_{j=1}^d (1+jt)$ and arranging terms  yields an inhomogeneous $\tau$-equation of order at most $d$ with polynomial coefficients of degree at most $d+r$.
\end{proof}

\begin{remark}\label{remark:inhom_exppol}
\Cref{prop:diffeqtotau} generalizes without difficulty when \cref{eq:deqexp} has a non-zero right-hand side, e.g.\ an exponential polynomial $P \in C[t,e^t]$. In this case, the (equivalent) $\tau$-equation \eqref{eq:tau-delta} has a right-hand side equal to $\frac{\tau^d(\fB^{-1}(P))}{1+dt}$. For instance if $P\in C[t]$, then instead of multiplying~\eqref{eq:tau-delta} by just~$t^r \prod_{j=1}^d (1+jt)$, we multiply this time by $t^r (1+dt)^{d_P} \prod_{j=1}^d (1+jt)$, where $d_P \coloneqq \deg P$; this produces a $\tau$-equation of order at most $d$ and coefficients in $C[t]$ of degree at most $d_P + d + r$.
In the general case $P = \sum_{i=0}^{d_P} \sum_{j=0}^{e_P} p_{i,j} t^i e^{jt}$ with $p_{i,j} \in C$, we use that $\fB^{-1}(t^i e^{jt}) = i!t^i/(1-jt)^{i+1}$ to deduce, after multiplying this time the $\tau$-equation \eqref{eq:tau-delta} by $t^r (1+dt)^{\deg P}  \prod_{\ell=d-e_P}^{d-1} (1+\ell t)^{d_P+1} \prod_{j=1}^d (1+jt)$, a $\tau$-equation of order at most $d$ and with coefficients in $C[t]$ of degree at most $d_P + r+d + (d_P+1)e_P$.
\end{remark}

Before deducing from the proposition above the functional equations satisfied by the OGF in Table~\ref{Fig:M2}, 
let us just illustrate that one can obtain 
higher order difference equations in some other interesting cases:

\begin{exa}\label{exa:graphs}
We consider the EGF $\hat{f} = \exp(({\rm e}^t -1)^2/2)$. 
The associated OGF is $f = 
1+{t}^{2}+3\,{t}^{3}+10\,{t}^{4}+45\,{t}^{5}+\cdots$, 
the generating function of the numbers of simple labeled graphs on $n$ nodes in which each component is a complete bipartite graph, see \href{https://oeis.org/A060311}{A060311}.
Then
$\Dt(\hat{f})= \hat{f} \cdot {\rm e}^t ({\rm e}^t-1)$, 
thus $\fB(\frac{f-1}{t})= \fB(\Phi ^2(f) -  \Phi (f))$ by \cref{lemma:Borel}.
Therefore, $f$ satisfies the second-order $\Phi$-equation
$\Phi ^2(f) -  \Phi (f) = \frac{f-1}{t}$. Applying $\tau^2$ to both sides of this equation yields, using \eqref{eq:comm_Phi_tau}, the second-order $\tau$-equation
$(2t+1) f - \frac{2t+1}{t+1} \tau (f) = \frac{2t+1}{t} \cdot (\tau^2(f)-1)$.  
Rearranging terms, we deduce that $f$ satisfies the second-order $\tau$-equation
\end{exa}

\subsection{Other combinatorial examples} \label{subsec:combinat}

As promised in the introduction, we finish this section 
by considering the series in Table \ref{Fig:M2} plus a few more examples of $\tau$-equations arising from combinatorial generating functions. 

As far as Table \ref{Fig:M2} is concerned, we will only detail the case of the family of Bernoulli polynomials  
$B_n(x) = B_n^{(0)}(x)$ (see \cite{Apostol08}), defined by their exponential generating function as follows:
\[
\frac{t}{{\rm e}^{t}-1} \cdot \exp \left( x t\right) = \sum_{n \geq 0} B_n(x) \frac{t^n}{n!}.
\]
We apply our recipe to find a linear $\tau$-equation for
$B(x,t)$.

\begin{lemma}      \label{lem:Bernoulli}
The OGF of the Bernoulli polynomials $B(x,t)\coloneqq   \sum_{n \geq 0} B_n(x) t^n$ satisfies the functional equation 
\begin{equation}\label{eq:EqBernoulli}
\tau (B)=(t+1) \cdot B-\frac{t(t+1)}{(t+1-tx)^2}.
\end{equation}
\end{lemma}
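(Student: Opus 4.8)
The plan is to apply the ``recipe'' encapsulated in Lemma~\ref{lemma:Borel} and the Proposition following it, specialized to the case $r=1$. First I would identify the differential equation with exponential coefficients satisfied by the EGF $\hat B(x,t) = \frac{t}{{\rm e}^t-1}\,{\rm e}^{xt}$. Writing $u := {\rm e}^t$, one has $\hat B(x,t) = \frac{\log u}{u-1}\,u^{x}$ as a function, but it is cleaner to proceed directly: compute $\Dt(\hat B)$ and look for a first-order relation $a_0({\rm e}^t)\hat B + a_1({\rm e}^t)\Dt(\hat B) = P(t)$ with $a_0,a_1,P$ polynomials of degree at most $1$ in their respective arguments. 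A short computation gives $\Dt(\hat B) = \hat B\cdot\bigl(x - \frac{1}{t} + \frac{{\rm e}^t}{{\rm e}^t-1}\cdot(-1)\bigr)$; multiplying through by $t({\rm e}^t-1)$ and simplifying using $t\,{\rm e}^{xt}\cdot\frac{t}{{\rm e}^t-1}\cdot(\text{pieces})$, one should land on a relation of the shape $({\rm e}^t-1)\Dt(\hat B) - (x-1)({\rm e}^t-1)\hat B + {\rm e}^t\hat B = P(t)$ for a polynomial $P$ coming from the $-\frac1t$ term and the constant adjustment; I would carry this calculation carefully, since getting $P(t)$ exactly right is what feeds the inhomogeneous term of the $\tau$-equation.

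Next, following the proof of the Proposition, I would translate this differential equation into a $\tau^{-1}$-equation for $B(x,t)$ via the two intertwining identities of Lemma~\ref{lemma:Borel}: identity~(1), $\Phi_\tau(f)=g \iff \hat g = \hat f\cdot{\rm e}^t$, handles each factor of ${\rm e}^t$, while identity~(2), $\Dt(\fB(f)) = \fB\bigl(\frac{f(t)-f(0)}{t}\bigr)$, handles the derivative $\Dt(\hat B)$. Applying $\fB^{-1}$ to the differential equation thus converts ${\rm e}^t$-multiplication into $\Phi_\tau$ and $\Dt$ into the shift-and-divide operator on the OGF side; collecting terms yields a first-order equation relating $\Phi_\tau^k(B)$ for $k=0,1$. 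Then, as the Proposition's proof says, I apply the appropriate power of $\tau$ (here $\tau$ itself, to clear the single $\Phi_\tau$) to turn the $\Phi_\tau$'s into honest values of $B\bigl(x,\frac{t}{1+t}\bigr)$ and $B(x,t)$, arriving at an equation of the form $\tau(B) = R(x,t)B + S(x,t)$.

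Finally I would check that the resulting $R$ and $S$ match the claimed values $R = 1+t$ and $S = -\frac{t(1+t)}{(1+t-tx)^2}$ — this amounts to routine algebraic simplification, in particular computing $\fB^{-1}$ of the polynomial $P(t)$ (which introduces factorials that must be reabsorbed) and simplifying the rational function in $t$ and $x$; the factor $(1+t-tx)^{-2}$ should emerge from $\frac{1}{1-t}f\bigl(\frac{t}{1-t}\bigr)$ applied twice (or from one application to a term already carrying a $(1-t)^{-1}$), consistent with the shape of $\Phi_\tau$. The main obstacle is bookkeeping: there are several places where a sign, a factor of $\frac{1}{1+t}$ from $\Phi_\tau$, or an off-by-one in the index of $\tau$ versus $\tau^{-1}$ can slip in, and the inhomogeneous term $S$ in particular must be tracked through the Borel transform of $P(t)$ and through the final application of $\tau$. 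An alternative, arguably cleaner route that I would use as a cross-check is to \emph{verify} \eqref{eq:EqBernoulli} directly: substitute $t\mapsto \frac{t}{1+t}$ into the generating-function identity $\sum_n B_n(x)t^n = B(x,t)$ using the closed form of the EGF only implicitly, or better, compare the first several Taylor coefficients in $t$ of both sides of \eqref{eq:EqBernoulli} for symbolic $x$; this catches any transcription error without settling the conceptual derivation.
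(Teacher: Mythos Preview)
Your plan has a subtle but concrete gap. You aim to find a relation
\[
a_0({\rm e}^t)\,\hat B + a_1({\rm e}^t)\,\Dt(\hat B) = P(t)
\]
with $a_0,a_1$ of degree $\le 1$ in ${\rm e}^t$ and $P\in C[t]$, and you later rely on ``computing $\fB^{-1}$ of the polynomial $P(t)$''. But such a polynomial $P$ does not exist for generic $x$. Carrying out your own computation, one differentiates $({\rm e}^t-1)\hat B = t\,{\rm e}^{xt}$ to get ${\rm e}^t\hat B + ({\rm e}^t-1)\Dt(\hat B)=(1+xt){\rm e}^{xt}$, and combining with the original relation yields exactly your proposed shape
\[
({\rm e}^t-1)\Dt(\hat B) - (x-1)({\rm e}^t-1)\hat B + {\rm e}^t\hat B \;=\; (1+t)\,{\rm e}^{xt},
\]
whose right-hand side is \emph{not} a polynomial in~$t$. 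So the hypothesis of the Proposition you invoke is not met, and your bookkeeping plan for $\fB^{-1}(P)$ as stated does not apply. The repair is easy---$\fB^{-1}\bigl((1+t){\rm e}^{xt}\bigr)$ is still a rational function of $t$---but once you allow that, the whole detour through $\Dt(\hat B)$ becomes superfluous.

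The paper's proof exploits this directly and uses no derivative at all. It starts from the order-$0$ identity
\[
{\rm e}^t\cdot\hat B - \hat B \;=\; t\,{\rm e}^{xt},
\]
applies only part~(1) of Lemma~\ref{lemma:Borel} (so ${\rm e}^t\hat B = \fB(\Phi_\tau(B))$), and observes that $\fB^{-1}(t\,{\rm e}^{xt}) = t/(1-xt)^2$. This immediately gives $\Phi_\tau(B) - B = t/(xt-1)^2$, i.e.\ $B\bigl(\tfrac{t}{1-t}\bigr) = (1-t)B + \tfrac{t(1-t)}{(xt-1)^2}$, and the substitution $t\mapsto t/(1+t)$ yields \eqref{eq:EqBernoulli}. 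The moral: for the Bernoulli case the defining relation of $\hat B$ is already an ``${\rm e}^t$-linear'' identity, so differentiating only reintroduces the same ${\rm e}^{xt}$ you then have to undo.
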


\begin{proof} 
We start with the inhomogeneous differential equation (of order 0)
$ \hat{B} \cdot \exp(t) -  \hat{B} = t \cdot \exp(xt)$,
and follow the procedure described in the proof of \cref{prop:diffeqtotau}
and in \cref{remark:inhom_exppol}. 
We deduce $\fB(\phi(B) - B) = \fB(t/(xt-1)^2)$, 
and by applying $\tau \circ \fB^{-1}$ we get
$ (t+1) B - \tau(B) = t(t+1)/(t+1-xt)^2$.
\end{proof}

As a consequence of \cref{lem:Bernoulli}, \cref{thm:main-intro} implies \cref{coro:main-intro} for the first entry of \cref{Fig:M2}; in particular, this proves that the OGF $B(0,t)$ of the sequence of \emph{Bernoulli numbers} $B_n = B_n(0)$, classical in number theory, is strongly D-transcendental.
All the other functional equations for the OGFs in Table \ref{Fig:M2} are calculated in the same way,
by using the procedure described in the proof of \cref{prop:diffeqtotau}. In each case, \cref{thm:main-intro} implies \cref{coro:main-intro} for the corresponding entry of \cref{Fig:M2}.

\medskip We complete our list with a few more examples from combinatorics.

\subsubsection{Tangent numbers}\label{ssec:tangent}
The integer sequence of the so-called ``tangent numbers'' $(f_n)_{n \geq 0} = (1, 2, 16, 272, 7936, 353792,\ldots)$ (\href{https://oeis.org/A000182}{A000182}) appears in the expansion of the
(D-algebraic) tangent function as an EGF:
\[
\tan(t) =
 1 \, \frac{t}{1!}
+  2 \, \frac{t^3}{3!}
+  16 \, \frac{t^5}{5!} 
+  272 \, \frac{t^7}{7!}  
+  7936 \, \frac{t^9}{9!} 
+  353792 \, \frac{t^{11}}{11!} + 
\cdots .
\]

A slight variation of Lemma~\ref{lemma:Borel},
based on the exponential form of the tangent function, proves that the corresponding OGF  
$
F(t) = t+2\,{t}^{3}+16\,{t}^{5}+272\,{t}^{7}+7936\,{t}^{9}+   
\cdots
$
satisfies the difference equation
\begin{equation} \label{tau_eq_tan}
F \left( {\frac {t}{1-2\,it}} \right) + \left( 1-2\,it \right) F \left( t \right) =2\,t,
\end{equation}                                                       and Theorem~\ref{thm:main-intro} implies that $F(t)$
is strongly  D-transcendental.

Indeed, as pointed out earlier, \cref{lemma:Borel}\ref{it:(c)} also holds for non-integer values of $j$, by taking~\cref{eq:iter} as definition of~$\Phi^j$.
Thus, from the equality
\[
 \tan(t) = - i \, \frac{e^{2it}-1}{e^{2it}+1}
\]
we deduce that 
$
(e^{2it} + 1) \fB(F) = i (1 - e^{2it})
$, and hence
\[
\fB(\Phi^{2i}(F) + F) = \fB \left( i - \frac{i}{1-2it} \right),
\]
which in turn implies~\cref{tau_eq_tan}.

As a side remark, note that one can deduce from the D-transcendence of $F(t)$ an alternative proof for the  D-transcendence of the OGF of the Bernoulli numbers via the classical relation $f_n = 2^{2n+1} (2^{2n+2}-1) \frac{(-1)^n}{n+1} B_{2n+2}$.

\subsubsection{Alternating permutations}
A permutation $\sigma = \sigma_1 \cdots \sigma_n$ of the set $\{ 1,2,\ldots, n \} $ is called \emph{alternating} if $\sigma_1 < \sigma_2 > \sigma_3 < \cdots$.
Alternating permutations are counted by the sequence $(a_n)_{n\geq 0} = (1, 1, 1, 2, 5, 16, 61, 272,\ldots)$ (\href{https://oeis.org/A000111}{A000111}).   
By a famous result due to Andr\'e~\cite{Andre1881}, its EGF is known to be $\tan(t) + \sec(t)$, which is clearly D-algebraic.
{It is natural to inquire about the nature of the corresponding OGF, $A(t) \coloneqq   \sum_{n \geq 0} a_n t^n$.
Using the identity 
\[
\tan(t) + \sec(t) = \frac{1-i e^{it}}{e^{it}-i}
\]
we deduce as in \S\ref{ssec:tangent} that 
\[
\fB(\Phi^{i}(A) - A) = (e^{it} - i) \fB(A) = 1 - i \, e^{it} = \fB(1- i/(1-it)),
\]
and therefore 
$A(t)$  satisfies the difference equation
\[
A \left( {\frac {t}{1 - it}} \right) = \left( t + i \right) A \left( t \right) + 1-i-it.
\]                                                         
Theorem~\ref{thm:main-intro} then implies that $A(t)$ is strongly  D-transcendental.
               
\subsubsection{Springer numbers}
The sequence $(s_n)_{n \geq 0} = (1, 1, 3, 11, 57, 361, 2763, \ldots)$    
(\href{https://oeis.org/A001586}{A001586}) of the Springer numbers~\cite{Springer71}
bears several combinatorial interpretations; for instance, it counts        
the topological types of odd functions with $2n$ critical values~\cite{Arnold92,Arnold92b}. 
By a result due to Glaisher~\cite{Glaisher1898},
its EGF is $1/(\cos(t) - \sin(t))$, which  is D-algebraic.
As in \S\ref{ssec:tangent} one can prove that 
the corresponding OGF, 
$S(t) = \sum_{n\geq 0} s_n t^n$,  satisfies the difference equation
\[
S \left( {\frac {t}{1 - 2it}} \right) = \left( 2 \, t + i \right) S \left( t \right) + {\frac { \left( 1-i \right)  \left( 2\,t+i \right) }{t+i}},
\]    
and Theorem~\ref{thm:main-intro} implies that $S(t)$
is strongly D-transcendental.

An alternative way to deduce this fact is to use the relations~\cite[\S253]{Glaisher1898} 
\[s_{2n} =  (-1)^n \frac{4^{2n+1}}{4n+2} \cdot U_{2n+1}(1/4), \quad 
 s_{2n-1} =  (-1)^n \frac{4^{2n}}{4n} \cdot U_{2n}(1/4) \]
between the Springer numbers                              
and the values at $x=1/4$ of the Glaisher polynomials $U_n(x)$ in
Table~\ref{Fig:M2}.

\subsubsection{Barred preferential arrangements}

For any $m \in \N$, the sequence $(r_{m, n})_{n \geq 0} = 
(1, m+1, (m+1)(m+3), (m+1)(m^2+8m+13), (m+1)(m^3+15m^2+63m+75), \ldots)$
(\href{https://oeis.org/A226513}{A226513}) 
counting the number of ``barred preferential arrangements'' of $n$ elements with $m$ bars
was studied in~\cite{AhUsPi13}. When $m=0$, this coincides with the sequence of surjection numbers mentioned on page~\pageref{surj-nbs}.
It was shown in~\cite[Thm.~4]{AhUsPi13} that the EGF of $(r_{m, n})_{n\geq 0}$ is equal to $1/(2-e^t)^{m+1}$. 
As in \S\ref{ssec:tangent} one can prove that 
the corresponding OGF, 
$R_m(t) = \sum_{n\geq 0} r_{m,n} t^n$,  satisfies the difference equation
\[
\tau (R_m) = \frac{(m+1)t + 1}{2}R_m + \frac12 
\]
and Theorem~\ref{thm:main-intro} allows to conclude that $R_m(t)$
is strongly D-transcendental.

\subsubsection{Various other sequences}
Many other examples of D-algebraic EGFs, whose corresponding OGFs can be shown to satisfy $\tau$-equations,
may be found in various references such as the book \cite{ErMaObIr1955}
(e.g.\ on page 252 for generalized Bernoulli polynomials;
on page 253 for generalized Euler polynomials; 
on page 254 for generalized Imschenetzky polynomials),
and the articles~\cite{Glaisher1898, 
Glaisher1899, 
Frobenius1910, 
Carlitz59, 
BoBu64, 
KaTh75, 
Luo06, 
OzSiSr10, 
Ozarslan11}, that we will comment in detail in the sequel.
Note however that these references 
never mention explicitly the corresponding
$\tau$-equations for the OGFs.
For all these examples, the OGF is D-transcendental by \cref{thm:ADH},
and even strongly D-transcendental by \cref{thm:main-intro} 
when the order of the $\tau$-equation is $1$.

\paragraph{Karande--Thakare polynomials}
In \cite{KaTh75} Karande and Thakare introduced the family of polynomials $D_n(x; a,k)$, with $a\neq 0$ and $k\in \N$, defined by
\begin{equation} \label{Karande}
\frac{2 \left( \frac{t}{2} \right)^k}{e^t-a} \cdot \exp(xt) = \sum_{n \geq 0} D_n(x; a,k) \frac{t^n}{n!}.
\end{equation}
This is a simultaneous generalization of several families from \cref{Fig:M2}, 
since $B_n(x) = D_n(x;1,1)$,
$E_n(x) = D_n(x;-1,0)$,
$G_n(x) = D_n(x;-1,1)$
and $C_n^{(\lambda )}(x) = \frac{\lambda -1}{2\lambda } D_n(x;1/\lambda ,0)$ for $\lambda  \neq 0$.
Using \cref{prop:diffeqtotau} one can prove that the OGF 
$F(t) \coloneqq \sum_{n \geq 0} D_n(x; a,k) {t^n}$ satisfies the $\tau$-equation
\[
\tau (F) - \frac{t+1}{a} F = - \frac{k! t^k}{2^{k-1} a} \cdot \frac{t+1}{(t+1-tx)^{k+1}}.
\]
The OGF $F(t)$ cannot be D-finite, since otherwise $\sum_{n \geq 0} D_n(x; a,k) \frac{t^n}{n!}$ would be D-finite too, hence  $1/(e^t-a)$ would be D-finite as well by~\eqref{Karande}, a contradiction with the fact that $1/(e^t-a)$ has infinitely many complex singularities. By \cref{thm:main-intro}, $F$ is therefore strongly D-transcendental.

\paragraph{Frobenius--Carlitz rational functions}
Carlitz considered in \cite{Carlitz59} the ``Eulerian'' rational functions $R_n(x)$ defined by
\[
\frac{1-x}{e^t-x} = \sum_{n \geq 0} R_n(x) \frac{t^n}{n!},
\]
and already studied by Frobenius~\cite{Frobenius1910}.
Using \cref{prop:diffeqtotau} one can prove that the OGF 
$F(t,x) \coloneqq \sum_{n \geq 0} R_n(x) {t^n}$ satisfies the $\tau$-equation
\[
\tau (F) - \frac{t+1}{x} F = \frac{x-1}{x}.
\]
Again, by \cref{thm:main-intro}, $F(t,x_0)$ is strongly D-transcendental for any $x_0 \neq 0$.

\paragraph{Generalized Bernoulli, Euler and Carlitz polynomials}
Extensions of the classical Bernoulli, Euler and Carlitz polynomials 
are defined by
(\cite[\S32, p.~185]{Norlund1922}, \cite[\S77, p.~145]{Norlund1924}, \cite[p.~252]{ErMaObIr1953}, \cite[p.~30]{BoBu64})
\begin{align*}
\sum_{n\geq 0}B_n^{(\alpha)}(x) \frac{t^n}{n!}
&\coloneqq \left(\frac{t}{{\rm e}^{t}-1} \right)^{\alpha}
\cdot e^{xt},
\\
\sum_{n\geq 0}E_n^{(\alpha)}(x) \frac{t^n}{n!}
&\coloneqq \left(\frac{2t}{{\rm e}^{t}+1} \right)^{\alpha} \cdot e^{xt},
\\
\sum_{n\geq 0}C_n^{(\lambda , \alpha)}(x) \frac{t^n}{n!}
&\coloneqq \left(\frac{1-\lambda }{1-\lambda {\rm e}^{t}} \right)^{\alpha} \cdot e^{xt} .
\end{align*}
For $\alpha\in\N$, the corresponding OGFs satisfy $\tau$-equations of order $\alpha$, 
hence they are D-transcendental. We conjecture that they are even strongly D-transcendental.
For instance, $F(t) = \sum_{n\geq 0} B_n^{(2)}(x) {t^n}$ satisfies
\[
\tau^2(F) 
=
\frac{4 t +2}{t +1} \cdot \tau(F)
- (2 t + 1) F
-\frac{2 \left(2 t +1\right) t^{2}}{\left(t x -2 t -1\right)^{3}} \, ,
\]
and since $F$ is not D-finite, it is D-transcendental by \cref{thm:ADH}.
Similar conclusions can be drawn for other families of generalized polynomials, such as the 
generalized Apostol-Euler polynomials~\cite{Luo06} and the further unification of the Apostol–Bernoulli, Euler and Genocchi polynomials~\cite{OzSiSr10}, and its generalization to higher orders~\cite{Ozarslan11}.

\paragraph{Glaisher's sequences}
Glaisher introduced in~\cite{Glaisher1898,Glaisher1899} several interesting sequences, that are nowadays 
called Glaisher's $I$, $J$, $H$, $P$, $Q$, $R$ and $T$ numbers.
For instance, the $R$-numbers $1, 7, 305, 33367, \ldots$ (\href{https://oeis.org/A002437}{A002437}) are the coefficients~$R_n$ in the expansion $\cosh(t)/(2\cosh(2t)-1) 
= (1+\cosh(2t))/(2 \cosh(3t))
= \sum_{n \geq 0} (-1)^n R_n t^{2n}/(2n)!$~\cite[\S132, p.~70]{Glaisher1898}.
The corresponding OGF $R(t) \coloneqq \sum_{n \geq 0} (-1)^n R_n t^{2n}$ satisfies the difference equation
\[
R\left(\frac{t}{1+6t} \right) + (6t+1) R(t) -\frac{2 \left(6 t +1\right) \left(7 t^{2}+6 t +1\right)}{\left(5 t +1\right) \left(3 t +1\right) \left(t +1\right)} =0.
\]

Similarly, the $T$-numbers $1, 23, 1681, 257543, \ldots$ (\href{https://oeis.org/A002439}{A002439}) are the coefficients $T_n$ in the expansion $\sinh(t)/(2\cosh(2t)-1) 
= \sinh(2t)/(2 \cosh(3t))
= \sum_{n \geq 0} (-1)^n T_n t^{2n+1}/(2n+1)!$~\cite[\S143, p.~75]{Glaisher1898}.
The corresponding OGF $T(t) \coloneqq \sum_{n \geq 0} (-1)^n T_n t^{2n+1}$ satisfies the difference equation
\[
T\left(\frac{t}{1+6t} \right) + (6t+1) T(t) -\frac{2t \left(6 t +1\right) }{\left(t +1\right) \left(5 t +1\right)}
 =0.
\]
More generally, one can prove using \cref{prop:diffeqtotau} that, for any complex numbers $a,b,c,u,v$, the OGF $F(t)$ associated to the EGF
of $(u \sinh \! \left(a t \right)+v \cosh \! \left(a t \right))/(c \cosh \! \left(b t \right))$
satisfies the difference equation
\[
F\left(\frac{t}{1+2bt} \right) + (2bt+1) F(t) 
+ \frac{2 \left(2 b t +1\right) \left( (a u + b v)t + v \right)}{c \left( (a - b)  t -1\right) 
\left( (a + b) t +1\right)}
 =0.
\]
By \cref{thm:main-intro}, we conclude that $T(t)$, and hence also $\sum_{n \geq 0} T_n t^n$, are (strongly) D-transcendental.

The following is a generalization of Glaisher's  $I$-numbers and $J$-numbers: one starts with 
\[
\sum_{n \geq 0} f_n \frac{t^n}{n!}
\coloneqq
\frac{3}{2} \cdot \frac{{\mathrm e}^{a t}+{\mathrm e}^{-a t}+c}{{\mathrm e}^{b t}+{\mathrm e}^{-b t}+1}
=
1+\frac{c}{2}+\left(a^{2}-\frac{2}{3} b^{2}-\frac{1}{3} b^{2} c \right) t^{2}+\left(b^{4} c +a^{4}-4 b^{2} a^{2}+2 b^{4}\right) t^{4}+\cdots .\]
Then, the corresponding OGF $F(t) = \sum_{n \geq 0} f_n t^n$ satisfies the difference equation
\[
F \left( \frac{t}{1+3bt} \right) -  (3 b t + 1) F(t) + r(t) = 0,
\]
where $r(t)$ is the rational function
\begin{small}
\[
\frac{3 b t \left(3 b t +1\right) \left(\left(a t -b t -1\right) \left(a t +b t +1\right) \left(a t -2 b t -1\right) \left(a t +2 b t +1\right) c +2 \left(b t +1\right) \left(2 b t +1\right) \left(a^{2} t^{2}+2 b^{2} t^{2}+3 b t +1\right)\right)}{2 \left(b t +1\right) \left(2 b t +1\right) \left(a t -b t -1\right) \left(a t +b t +1\right) \left(a t -2 b t -1\right) \left(a t +2 b t +1\right)} .
\]\end{small}
\noindent Glaisher's $I$-numbers~\cite[\S57, p.~35]{Glaisher1898} correspond to the particular choice $(a,b,c) = (0, 1, 0)$ and 
the $J$-numbers~\cite[\S75, p.~44]{Glaisher1898} correspond to the particular choice $(a,b,c) = (1, 2, 0)$.
For any $b\neq 0$, the EGF admits infinitely many singularities, hence it cannot be D-finite; thus $F(t)$ cannot be D-finite either, and by \cref{thm:main-intro} $F(t)$ is necessarily strongly D-transcendental.

\subsubsection{Three more challenging examples}\label{ssec:challenges}
Here we describe three interesting functional equations from the literature where we cannot conclude differential transcendence using the methods of this article.

\paragraph{Fishburn-Stoimenow numbers}
The sequence $(p_n)_{n \geq 0} = (1, 1, 2, 5, 15, 53, 217, 1014, \ldots)$ (\href{https://oeis.org/A022493}{A022493})
with OGF
\[
P(t) = \sum_{n \geq 0} \prod_{i=1}^n \left( 1 - (1-t)^i \right)
\]
is known to count several interesting combinatorial objects.
For instance, Zagier proved in~\cite[Thm.~1]{Zagier01} that $p_n$ is equal to the number of 
certain involutions on $2n$ points, called \emph{regular linearized chord diagrams}: these are involutions $\pi$ in $S_{2n}$ with no fixed points and such that if $\pi_i > \pi_{i+1}$, then $\pi_i > i \geq \pi_{i+1}$.
On the other hand, 
Bousquet-M\'{e}lou et al.~\cite[Thm.~13]{BoClDuKi10} proved that  $p_n$ is the number of unlabeled  posets of size $n$ that do not contain any induced subposet isomorphic to the union of two disjoint 2-element chains (these are called \emph{$(2 + 2)$-free posets}).
Zagier proved~\cite[Thm.~4]{Zagier01} that $p_n/n! \sim \kappa \cdot (6/\pi^2)^n \cdot n^{1/2}$ with $\kappa \approx 2.7$, which implies that $P(t)$ is not D-finite.
He also proved~\cite[Thm.~3]{Zagier01} that 
\[
P(1-e^{-24t}) = e^t \cdot \sum_{n \geq 0} \frac{T_n}{n!} t^n,
\]
where $(T_n)_{n \geq 0} = (1, 23, 1681, 257543, \ldots)$ is the sequence of Glaisher's $T$-numbers.
This implies that $P(t)$ is D-transcendental if and only if the EGF of Glaisher's $T$-numbers is D-transcendental. Note that $\sum_{n \geq 0} T_{n} t^{2n+1}/ (2n+1)!$ is equal to $\sin(2t)/(2 \cos(3t))$, hence it is D-algebraic. Recall that we proved that the OGF $\sum_{n\geq 0} T_n t^{n}
$ is (strongly) D-transcendental. However, none of these results allows to conclude whether the EGF $\sum_{n \geq 0} \frac{T_n}{n!} t^n$ (equivalently, $P(t)$) is D-transcendental or not. We leave this as an open question.
Note that it was proved in \cite[\S6.2]{BoClDuKi10} that $P(t)$ is equal to $F(t,1)$
where $F(t,u) \in \Q[u][[t]] \cap \Q(t)[[u]]$ starts
\[
1+t +\left(u +1\right) t^{2}+\left(u^{2}+3 u +1\right) t^{3}+\left(u^{3}+7 u^{2}+6 u +1\right) t^{4}+  \cdots 
=
\frac{1}{1-t}+\frac{t^{2} u}{\left(1-t \right)^{3}}+\frac{t^{3} \left(1+t -t^{2}\right) u^{2}}{\left(1-t \right)^{6}} + \cdots 
\]
and satisfies the functional equation
\[
F \! \left(t , u\right) 
= 
\frac{\left(1-u \right) \left(1-t \right)}{\left(t u -t +1\right)^{2}}+\frac{u}{\left(t u -t +1\right)^{2}} \cdot F \! \left(t , \frac{u}{t u -t +1}\right).
\]
From the second part of~\cref{thm:ADH}, we can deduce that for any $t_0 \in \C \setminus \{ 0 \}$ with $|t_0|<1$, we have that $F(t_0,u) \in \C[[u]]$ is D-transcendental in~$u$ (this is because the homography $u/(t_0 u -t_0 +1)$ has two fixed points, hence it is conjugated to a dilation).
But, once again, this does not give any information about the nature of $F(t,1) = P(t)$.
%
Another example related to the previous one is the following. 
An unlabeled poset is called \emph{$(3 + 1)$-free} if it does not contain the disjoint union of chains of lengths 3 and 1 as an induced subposet.
The sequence $(q_n)_{n \geq 0} = (1, 2, 5, 15, 49, 173, 639, 2469, \ldots)$ (\href{https://oeis.org/A079146}{A079146}), counting
$(3 + 1)$-free posets with $n$ unlabelled vertices,
was studied by Guay-Paquet, Morales and Rowland in~\cite{GuMoRo14}.
They showed that $q_n \sim 2^{n^2/4- \kappa n \log n + O(n)}$ for some $\kappa>0$, see \cite[Thm.~4.4 and Table 1]{GuMoRo14}, a result which implies that the OGF $\sum_{n \geq 0} q_n t^n$ is not D-finite.
As for $(2 + 2)$-free posets, it is natural to ask whether $\sum_{n \geq 0} q_n t^n$ is D-transcendental, see Question 1 in \href{https://sites.google.com/view/ahmorales/research/conjectures}{Morales' compilation of combinatorial conjectures}.

\paragraph{Schmid's equation}
The following functional equation, with $q\in\N$ and $a,c\in (0,1)$,
was considered in~\cite[Eq.~(2.8)]{Schmid92}:
\[
f(t) = q f(t/q) - \frac{aq}{1+t} \cdot f\left(\frac{ct/q}{1+(1-c)t} \right) + \frac{qt^2}{1+t^2} .
\]
His analysis in Section 3 shows that $f$ is not a rational function.
It would be interesting to prove that $f$ is (strongly) D-transcendental, by using an extension of the methods in this paper.

\paragraph{Generalized digital trees}
Flajolet and Richmond~\cite[Lem.~1]{FlRi92} solved the following difference-differential equation, with 
{$b\in\N$:}
\[
f^{(b)}(t) = 2 e^{t/2} \cdot f\left(\frac{t}{2} \right) + e^t .
\]
Here $f$ is the EGF of the expected number $f_n$ of nonempty nodes in a random tree built from $n$ elements,
{in a tree partitioning process depending on some fixed parameter $b$ (see the introduction of \cite{FlRi92} for more details).}
They showed (\cite[Lem.~2]{FlRi92}) that if $G(t)$ denotes the inverse Borel transform $\fB^{-1}(e^{-t} \cdot f(t))$, then
\[
(1+t)^b \cdot G(t) = 2 t^b \cdot G\left(\frac{t}{2} \right) + t (1+t)^{b-1} .
\]
They proved that $G$ has infinitely many singularities, and that $f_n$ has asymptotic behavior incompatible with D-finiteness. 
Since $b$ is an integer, $G$ is D-transcendental by Theorem~\ref{thm:ADH} with $q=1/2$.
It would be nice to complete the study of the D-algebraic nature of $f$.

\subsection{An elementary treatment of one example: Bernoulli polynomials}  
\label{subsec:Bernoulli}

We give here an elementary proof of the D-transcendence of $B(x,t)$, based on two main steps: first, a
strong link between the expansion of $B(x,t)$ at $t=\infty$ and the Euler gamma function $\Gamma$,
see~\eqref{eq:asymptotic-dev-psi}; second, Hölder's theorem on the D-transcendence of the
$\Gamma$-function~\cite{Holder1887}.

Beyond the case of the OGF $B(x,t)$ of Bernoulli polynomials, {thinking of the}
two-step approach mentioned above, a result of Praagman \cite[Thm.~1]{Praagman} ensures that solutions to general $\tau$-equations are meromorphic at $\infty$; however, there is no hope in general to express such expansions at $\infty$ using special functions, nor to prove that these expansions are D-transcendental. As we shall see in Section~\ref{subsec:difficulties}, Galois theory comes 
{into play} 
to solve this problem, focusing on the equation itself rather than on its solutions.
({Praagman's result}
applies to the meromorphic behavior on $\mathbb C$ of solutions to difference equations with the shift $t\mapsto t+1$, which translates into a similar result on $\mathbb C\cup\{\infty\}\setminus \{0\}$ after our change of variable $t\mapsto \frac{1}{t}$.)

\begin{proposition}\label{prop:exampleBernoulli}
The ordinary generating function $B(x,t)$ 
is D-transcendental over $\C(t)$ for any $x\in\C$.
\end{proposition}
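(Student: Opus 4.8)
The plan is to reduce the statement, by a chain of transformations all of which preserve differential algebraicity over the ambient field of rational functions, to Hölder's theorem~\cite{Holder1887,Bank-Kaufman-1978-Holder}, which asserts that the Euler $\Gamma$-function is D-transcendental over $\C(z)$.

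First I would turn the functional equation of Lemma~\ref{lem:Bernoulli}, $\tau(B)=(1+t)B-\frac{t(1+t)}{(1+t-tx)^2}$, into a closed form. Laplace-summing the exponential generating function $\frac{t\,{\rm e}^{xt}}{{\rm e}^t-1}$ — equivalently, iterating the $\tau$-equation and telescoping the products $\prod_{j<k}\bigl(1+\tau^j(t)\bigr)^{-1}=(1+kt)^{-1}$ — gives, for $t$ in a suitable sector at $0$ and with $\psi=\Gamma'/\Gamma$ the digamma function,
\[
B(x,t)=\sum_{k\geq 1}\frac{t}{\bigl(1+(k-x)t\bigr)^{2}}=\frac1t\,\psi'\!\Bigl(\tfrac1t+1-x\Bigr),
\]
of which the formal series $B(x,t)\in\C[x][[t]]$ is the Gevrey-$1$ asymptotic expansion as $t\to0$. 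After the substitution $u=1/t$ — which conjugates $\tau$ to the unit shift $u\mapsto u+1$ — this reads $B(x,1/u)=u\,\psi'(u+1-x)$, a meromorphic function of $u$ whose only singularities accumulating at $u=\infty$ come from the poles of $\psi'$ along the negative real axis.

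Granting the identity, suppose for contradiction that $B(x,t)$ is differentially algebraic over $\C(t)$. Differential algebraicity over $\C(t)$ is preserved by the change of variables $t\leftrightarrow 1/u$ and by affine substitutions $u\mapsto u+c$; moreover the differentially algebraic functions over $\C(u)$ form a field, and this class is closed under $g\mapsto\int g$, under $g\mapsto{\rm e}^{g}$, and under $g\mapsto\log g$. Chaining these operations, the assumption would force in turn $u\,\psi'(u+1-x)$, then $\psi'(u)$, then $\psi(u)$, and finally — since $\Gamma'/\Gamma=\psi$ — the function $\Gamma(u)$ itself to be differentially algebraic over $\C(u)$, contradicting Hölder's theorem. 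Hence $B(x,t)$ is D-transcendental over $\C(t)$ for every $x\in\C$.

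The delicate point, and the one I expect to cost the most care, is the passage from the \emph{formal} (divergent, Gevrey-$1$) series $B(x,t)$ to the \emph{analytic} function $\frac1t\psi'(\frac1t+1-x)$ that carries the $\Gamma$-function: one must know that any algebraic differential relation over $\C(t)$ satisfied by the formal series is also satisfied by this analytic sum (the converse being immediate, by taking asymptotic expansions of the relation). This is exactly the uniqueness of $1$-summability (Watson's theorem): a holomorphic function that is Gevrey-$1$ asymptotic to $0$ on a sector of opening $>\pi$ vanishes identically, and $\frac1t\psi'(\frac1t+1-x)$ is holomorphic and Gevrey-$1$ asymptotic to $B(x,t)$ on such a sector — the Borel singularities lie on the imaginary axis (poles of $\frac{s}{{\rm e}^s-1}$), the poles of $B(x,t)$ accumulate at $t=0$ only in the direction $\arg t=\pi$, and the Stokes corrections picked up across $\arg t=\pm\pi/2$ are exponentially small there. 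Everything else is formal bookkeeping together with the classical Hölder theorem.
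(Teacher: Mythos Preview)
Your approach and the paper's are the same at heart: both reduce to H\"older's theorem by linking $B(x,t)$ to the trigamma function $\Psi'$ and using that the D-algebraic class is closed under field operations, derivatives, antiderivatives, $\exp$ and rational substitutions. The organisational difference is that the paper first treats $x=0$ --- recognising $tB(0,t)+t^{2}$ as the formal asymptotic series $S(t)$ of $\Psi'(1/t)$ --- and then handles arbitrary $x$ by the purely \emph{formal} identity $B(x,t)=\tfrac1t\,S\!\bigl(\tfrac{t}{1+t-tx}\bigr)$, verified by matching the two $\tau$-equations; you instead produce the analytic function $\tfrac1t\,\Psi'\!\bigl(\tfrac1t+1-x\bigr)$ for every $x$ at once. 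Your Watson-theorem treatment of the formal-to-analytic passage is more explicit than the paper's one-word ``hence $S(t)$ is also D-transcendental'', a step the authors themselves flag as delicate in \S\ref{subsec:difficulties}. One simplification on your side: you do not actually need Borel summation or any Stokes bookkeeping, since the classical Stirling-type expansion of $\Psi'(z)$ is already Gevrey-$1$ on every sector $|\arg z|\le\pi-\delta$, so $\tfrac1t\,\Psi'\!\bigl(\tfrac1t+1-x\bigr)$ is a single analytic function Gevrey-$1$ asymptotic to $B(x,t)$ on a sector of opening $>\pi$ --- enough for Watson directly.
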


Notice that the
conclusion of Proposition~\ref{prop:exampleBernoulli} (which can alternatively be deduced by combining
Theorem~\ref{thm:ADH} and Lemma~\ref{lem:Bernoulli})\ is weaker than the one
in Theorem~\ref{thm:main-intro}.

\begin{proof}%
We first prove the result for $x=0$, that is     
for the OGF of the sequence of {Bernoulli numbers} $(B_n)_{n \geq 0}$:
\[B_0(t) = B(0,t) =
1-{\frac{1}{2}}t+{\frac{1}{6}}{t}^{2}-{\frac{1}{30}}{t}^{4}+{\frac{1}
{42}}{t}^{6}-{\frac{1}{30}}{t}^{8}+\cdots .
\]                                        

The idea is to use the ``logarithmic Stirling formula'': as $t \to \infty$,
\[ \log \Gamma(t) \sim \left(t-\frac12 \right) \log t - t + \frac{\log (2\pi)}{2} + \sum_{n \geq 1} \frac{B_{2n}}{2n(2n-1)} t^{2n-1}. \]
More precisely, we will use its consequence
on the asymptotic expansion as $t\to \infty$ of the derivative of the digamma function $\Psi(t)\coloneqq  \Gamma'(t)/\Gamma(t)$, see~\cite[Sec.~1.18]{ErMaObIr1953}:
\begin{equation}\label{eq:asymptotic-dev-psi}
    \Psi'(t) \sim_{t\rightarrow \infty} \frac{1}{t}+\frac{1}{2t^2}+\sum_{n=1}^\infty \frac{B_{2n}}{t^{2n+1}},
\end{equation}    
and the fact that $\Gamma(t)$ is D-transcendental.   
More formally, let us introduce the power series in $\C[[t]]$ defined by 
\[
S(t) \coloneqq   t + \frac{t^2}{2} +    \sum_{n=1}^\infty B_{2n}t^{2n+1}.
\]	    
As $\Gamma(t)$ is D-transcendental, it follows that 
$\Psi(t)$ is D-transcendental,
hence the asymptotic expansion \eqref{eq:asymptotic-dev-psi} of $\Psi'(t)$ at infinity is also D-transcendental (if a meromorphic function on $\mathbb C$ is D-algebraic, then its potential asymptotic expansion at infinity satisfies the same differential equations),
and finally $S(t)$ is D-transcendental.      
Since $S(t) = t B_0(t) + t^2$, it also follows that $B_0(t)$ is D-transcendental.

Now, let us treat the case of a general $x\in\C$. The key is the following 
equality, which holds in $\C[x][[t]]$:
\[
  B(x,t) = \frac{1}{t} \cdot S\left( \frac{t}{1+t-tx}\right),
\]
and which readily implies that $B(x,t)$ is D-transcendental.
To prove this last equality, it is sufficient to check that both sides
are power series in $\C[x][[t]]$ that
satisfy the same $\tau$-equation from Lemma~\ref{lem:Bernoulli} (since \eqref{eq:EqBernoulli} admits at most one power series solution).  
This is an easy consequence of the fact that $B_0$ satisfies the equation
$\tau(B_0)  =		
(t+1) \cdot B_0
- \frac {t}{1+t}
$,
hence $S$ satisfies
$\tau(S)  =		
S 
- t^2
$.
\end{proof}

\subsection{Why the situation is not so easy in general}
\label{subsec:difficulties}

We know from \cite[Thm.~1]{Praagman}
that any linear functional equation of the form
\begin{equation*}
   a_0(t)y+a_1(t)\tau(y)+\dots+a_n(t)\tau^n(y)=0, 
\end{equation*}
with $a_0(t),\dots,a_n(t)\in\C(t)$, has a basis of 
meromorphic solutions at $\infty$, i.e., 
in $\C\l(\l\{\frac{1}{t}\r\}\r)$ 
This means that the functional equation \eqref{eq:EqBernoulli}
\[
\tau (B)=(1+t) \cdot B-\frac{t(1+t)}{(1+t-tx)^2}
\]
satisfied by the OGF of the Bernoulli polynomials, has a meromorphic solution at $\infty$. As it turns out, we can find explicitly this solution at $\infty$  with some lucky and elementary manipulations, which might give the impression that the proof of Proposition~\ref{prop:exampleBernoulli} is a bit magical and pulled out of a hat. 

These manipulations are alternatively described in the next lemma.
Once the solution is found, checking its correctness simply amounts 
to using the classical identity
$\Psi'(t)-\Psi'(1+t) = \frac{1}{t^2}$.

\begin{lemma}
The meromorphic function 
$F(x,t)=\frac{1}{t}\Psi'\left(\frac{1+t-tx}{t}\right)$ at $\infty$
is a solution of \eqref{eq:EqBernoulli}, 
for any $x\in\C$. 
\end{lemma}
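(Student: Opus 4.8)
The plan is to verify directly that the claimed function solves~\eqref{eq:EqBernoulli} by applying the substitution $\tau$ and using the reflection-type identity $\Psi'(t)-\Psi'(1+t)=\frac{1}{t^2}$. First I would compute $\tau(F)(x,t)=F\!\left(x,\frac{t}{1+t}\right)$. Writing $u=\frac{t}{1+t}$, we have $\frac{1}{u}=\frac{1+t}{t}$ and, crucially, $\frac{1+u-ux}{u}=\frac{1}{u}+1-x=\frac{1+t}{t}+1-x=\frac{1+t-tx}{t}+1$, so that
\[
\tau(F)(x,t)=\frac{1+t}{t}\,\Psi'\!\left(\frac{1+t-tx}{t}+1\right).
\]
Thus the argument of $\Psi'$ after applying $\tau$ is exactly one more than the original argument $\frac{1+t-tx}{t}$, which is precisely the shape to which the recurrence $\Psi'(z)-\Psi'(z+1)=\frac{1}{z^2}$ applies.

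Next I would substitute $z=\frac{1+t-tx}{t}$ into the digamma recurrence, getting $\Psi'(z+1)=\Psi'(z)-\frac{1}{z^2}=\Psi'\!\left(\frac{1+t-tx}{t}\right)-\frac{t^2}{(1+t-tx)^2}$. Plugging this into the expression for $\tau(F)$ yields
\[
\tau(F)(x,t)=\frac{1+t}{t}\left(\Psi'\!\left(\frac{1+t-tx}{t}\right)-\frac{t^2}{(1+t-tx)^2}\right)=(1+t)\,F(x,t)-\frac{t(1+t)}{(1+t-tx)^2},
\]
which is exactly~\eqref{eq:EqBernoulli}. This reduces the whole lemma to the single classical identity for $\Psi'$, which itself follows by differentiating the functional equation $\Psi(t+1)=\Psi(t)+\frac{1}{t}$ (equivalently $\Gamma(t+1)=t\Gamma(t)$).

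The only remaining point is to check that $F(x,t)$ is genuinely a meromorphic function at $\infty$, i.e.\ an element of $\C(\{1/t\})$ (so that it is a legitimate solution in the sense required). For this I would note that as $t\to\infty$ the argument $\frac{1+t-tx}{t}=1-x+\frac{1}{t}$ tends to the finite limit $1-x$; provided $1-x$ is not a non-positive integer, $\Psi'$ is analytic there, and composing with the analytic germ $1-x+\frac{1}{t}$ and multiplying by $\frac1t$ gives a meromorphic germ at $\infty$. When $1-x$ is a non-positive integer one uses a finite number of applications of the recurrence $\Psi'(z)=\Psi'(z+1)+\frac{1}{z^2}$ to shift the argument away from the poles, at the cost of adding an explicit rational function of $t$; since~\eqref{eq:EqBernoulli} already forces any such shift to be compatible, $F$ remains a well-defined meromorphic solution at $\infty$ for every $x\in\C$.

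I do not anticipate a serious obstacle: the computation is a one-line substitution once the key observation $\frac{1+u-ux}{u}=\frac{1+t-tx}{t}+1$ under $u=\frac{t}{1+t}$ is made, and the arithmetic of $\Psi'$ is entirely classical. The mildly delicate part is purely bookkeeping — tracking the argument of $\Psi'$ through the substitution $t\mapsto\frac{t}{1+t}$ and making sure the pole structure at $\infty$ is handled correctly for the exceptional values of $x$ — but this is routine and does not affect the identity~\eqref{eq:EqBernoulli} itself.
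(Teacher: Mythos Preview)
Your verification is correct: the key observation $\frac{1+u-ux}{u}=\frac{1+t-tx}{t}+1$ under $u=\frac{t}{1+t}$ reduces everything to the single identity $\Psi'(z)-\Psi'(z+1)=1/z^2$, and your algebra checks out line by line.

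Your route differs from the paper's, though. The paper does not verify the formula directly; instead it \emph{derives} it. It first observes that $z(t)=1/t$ solves the homogeneous equation $\tau(z)=(1+t)z$, so that $G:=tF$ satisfies the simpler equation $\tau(G)=G-\bigl(\frac{t}{1+t-tx}\bigr)^2$. Iterating this $n$ times gives a telescoping sum
\[
G(x,t)-G\!\left(x,\tfrac{t}{1+nt}\right)=\sum_{k=1}^{n}\left(\frac{t}{1+kt-tx}\right)^{2},
\]
and letting $n\to\infty$ (using $\tfrac{t}{1+nt}\to 0$ and the series $\Psi'(s)=\sum_{k\ge 0}(s+k)^{-2}$) yields $G(x,t)=\Psi'\bigl(\frac{1+t-tx}{t}\bigr)$. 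Interestingly, the paper explicitly remarks just before the lemma that ``checking its correctness simply amounts to using the classical identity $\psi'(t)-\psi'(1+t)=1/t^2$''---which is precisely what you do---but then opts for the constructive argument instead. Your approach is shorter and self-contained as a proof of the stated lemma; the paper's approach has the pedagogical advantage of showing \emph{why} $\Psi'$ appears, which is the point of that whole subsection. One small remark: in the exceptional case $x\in\{1,2,3,\dots\}$ you do not actually need to shift via the recurrence---$\Psi'$ has only a double pole at each non-positive integer, so $F$ simply acquires a pole at $t=\infty$ and remains in $\C(\{1/t\})$ without further work.
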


\begin{proof}
We first remark that $z(t)=1/t$ is solution to the homogeneous equation $\tau (z)=(1+t) \cdot z$. 
Setting $G(x,t)\coloneqq  {F(x,t)}/{z(t)}=tF(x,t)$ yields a simpler functional equation, amenable to telescopic summation:
\begin{equation}
\label{eq:tau_eq_Bernoulli_G-bis}
    \tau (G) = G-\left(\frac{t}{1+t-tx}\right)^2.
\end{equation}
Iterating identity \eqref{eq:tau_eq_Bernoulli_G-bis}, we obtain that $G$ satisfies the following equation for any $n\geq1$ 
\begin{equation*}
    G-\tau^{n}(G)=G(x,t)-G\left(x,\frac{t}{1+nt}\right)=\sum_{k=0}^{n-1}\tau^k \left(\frac{t}{1+t-tx}\right)^2=
 \sum_{k=1}^{n} \left(\frac{t}{1+kt-tx}\right)^2.
\end{equation*}
We remind that {$\Psi'(t)=\sum_{k\geq 0}\frac{1}{(t+k)^2}$.}
Now, letting $n\to\infty$, we conclude that
$G(x,t) =G(x,0)+ \Psi'\left(\frac{1+t-tx}{t}\right)$.
Since {$\lim_{t\to 0^+}\Psi'(1/t)=0$}
we can choose $G(x,0)=0$, hence we have found a solution of the functional equation in Lemma~\ref{lem:Bernoulli} satisfied by $B(x,t)$, namely
 \begin{equation*}
F(x,t)=\frac{1}{t}\Psi'\left(\frac{1+t-tx}{t}\right).
 \end{equation*}
This ends the proof.
\end{proof}
               
The solution $F(x,t)$ is a ``nice'' solution, due to its link with the gamma function. Hölder's theorem implies immediately that 
it is D-transcendental over $\C(t)$, for any $x\in\C$.
One could try to use \eqref{eq:asymptotic-dev-psi} to deduce that 
$B(x,t)$ is the expansion of $F(x,t)$, but this may be a delicate procedure. 
Galois theory comes into the picture to solve this problem. 
One can even say that the solution of such a problem is the core 
of Galois theory, namely recognizing the properties of solutions that depend on the equation, and therefore being able to transfer a property from a solution to another, in spite of the fact that they live in very different 
algebras of functions. For Eq.~\eqref{eq:tau_eq_Bernoulli_G-bis}, 
the question is treated in \cref{exa:Bernoulli}, using \cref{prop:D-transcendecy-rank1-inhomo}, 
where the differential transcendence of $G(x,t)$, or of $t \,B(x,t)$, 
is proven to be equivalent to the differential properties of 
the rational function $\left({t}/{1+t-tx}\right)^2$, that is the inhomogeneous right-hand side of~\eqref{eq:tau_eq_Bernoulli_G-bis}.


\section{Differential transcendence via Galois theory}
\label{sec:DtransGalois}

\subsection{A survey of difference Galois theory}
\label{subsec:Galois}

We quickly review the basic concepts of Galois theory for difference equations.
We follow the classical book~\cite{vdPutSingerDifference}, in order to state the two main theorems of the theory:
the Galois correspondence (see Theorem~\ref{thm:Galois-correspondence}) and the theorem on the dimension of the Galois group
(see Theorem \ref{thm:tr.def=dim}).
All the criteria for the applications we have in mind are their consequences; they form the object of the
subsequent sections.

\medskip
Let us consider a field $\K$ of characteristic zero  equipped with an automorphism
$\tau:\K\to \K$. 
We call $C$, or sometimes $\mathbb K^\tau$, the subfield of $\mathbb K$ of $\tau$-invariant elements of $\mathbb K$, i.e., $C:=\{f\in\mathbb K: \tau(f)=f\}$.
The elements of $C$ are the ``constants of the theory'', therefore they are also called ``$\tau$-constants'' or simply ``constants'', when
the meaning is clear from the context.

\begin{exa}\label{exa:Klazar}
We can take for instance $\K=\C((t))$ and set
$\tau(f(t))\coloneqq  f\l(\frac{t}{t+1}\r)$,
for any $f\in \C((t))$.
We claim that the field of constants $\C((t))^\tau$  of $\C((t))$ coincides with $\C$.
Let us assume that there exists
$\sum_{n\geq -N}a_nt^n\in\C((t))$, for some positive integer $N$, with $a_{-N}\neq 0$,
which is invariant by~$\tau$.
We have
\[
\sum_{n=1}^{N} a_{-n}\l(1+\frac{1}{t}\r)^n+\sum_{n\geq 0}a_n\frac{t^n}{(t+1)^n}=\sum_{n=1}^{N} \frac{a_{-n}}{t^n}+\sum_{n\geq 0}a_nt^n.
\]
By identifying the coefficients of ${t^{1-N}}$, one sees that $Na_{-N}+a_{-N+1}=a_{-N+1}$, hence $a_{-N}=0$.
This is in contradiction with our assumptions and therefore we conclude that we must have $N\leq 0$.
A similar argument allows to exclude the case of a formal power series with $N>0$, 
and to conclude that the only $\tau$-constants
are the actual constants.
Notice that $\tau$ induces an automorphism of $\C(t)$ and of $\C(\{t\})$ as well.
Therefore we also have $\C(t)^\tau=\C(\{t\})^\tau=\C$.
\end{exa}

We consider a linear functional system
$\tau(\vec{y})=A\vec{y}$, where $A$ is an invertible square matrix of order $\nu$ with coefficients in $\K$,
$\vec{y}$ is a vector of unknowns and $\tau$ acts on vectors (and later also on matrices) componentwisely.

\paragraph*{Picard-Vessiot rings}
The Galois theory of difference equations follows the general structure of classical Galois theory. Picard-Vessiot rings play the role of the splitting fields,
where we can find abstract solutions that can be manipulated in the proofs.

\begin{definition}[{\cite[Def.~1.5]{vdPutSingerDifference}}]
A Picard-Vessiot ring for $\tau(\vec{y})=A\vec{y}$ over $\K$ is a $\K$-algebra
$R$ equipped with an automorphism extending the action of $\tau$,
that we still call $\tau$, and such that:
\begin{enumerate}
	\item $R$ does not have any non-trivial proper ideal invariant under $\tau$,
	i.e., $R$ is $\tau$-simple;
	\item there exists $Y\in\GL_\nu(R)$ such that $\tau(Y)=AY$ and
	$R$ is generated by the entries of $Y$ and the inverse of $\det Y$,
	that is $R=\K\l[Y,\det Y^{-1}\r]$. 
\end{enumerate}
\end{definition}

\begin{proposition}[{\cite[\S1.1]{vdPutSingerDifference}}]\label{prop:PVring}
A Picard-Vessiot ring always exists. If $C$ is algebraically closed, then
$R^{\tau}=C$ and $R$ is unique up to an isomorphism of 
$\K$-algebras
commuting with $\tau$.
\end{proposition}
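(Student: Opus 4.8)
The plan is to first reduce to the homogeneous matrix setting, then construct the Picard--Vessiot ring by a standard ``total ring of fractions + maximal $\tau$-invariant ideal'' argument, and finally use the algebraic closedness of $C$ to pin down $R^\tau$ and the uniqueness.

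\medskip
\emph{Existence.} First I would form the $\K$-algebra $\K[X, \det X^{-1}]$, where $X = (X_{ij})$ is a matrix of $\nu^2$ indeterminates, and extend $\tau$ to it by declaring $\tau(X) = AX$ (and $\tau(\det X^{-1}) = \det(A)^{-1}\det X^{-1}$, which makes sense since $A\in\GL_\nu(\K)$). This is a $\tau$-ring in which the defining equation has an invertible solution, but it need not be $\tau$-simple. So I would pick a maximal ideal $\fm$ among the proper $\tau$-invariant ideals of $\K[X,\det X^{-1}]$ (Zorn, using that $1\notin$ any proper ideal) and set $R := \K[X,\det X^{-1}]/\fm$. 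Because $\fm$ is maximal among $\tau$-invariant ideals, $R$ has no nontrivial proper $\tau$-invariant ideal; and the image $Y$ of $X$ lies in $\GL_\nu(R)$ with $\tau(Y)=AY$ and $R = \K[Y,\det Y^{-1}]$. Hence $R$ is a Picard--Vessiot ring.

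\medskip
\emph{The constants when $C$ is algebraically closed.} The key structural fact is that a simple $\tau$-ring which is finitely generated over $\K$ has the form $R \cong R_0 \oplus \tau(R_0) \oplus \cdots \oplus \tau^{e-1}(R_0)$ where $R_0$ is an integral domain and $\tau^e$ stabilizes $R_0$; moreover zero-divisors of $R$ are exactly the elements not invertible in the total ring of fractions, and the ring of constants is a field. Granting this (it is \cite[Cor.~1.16, Lemma~1.8]{vdPutSingerDifference}-type material), the constant field $R^\tau$ is an extension of $C$; the hard part is showing it is \emph{algebraic} over $C$, so that algebraic closedness of $C$ forces $R^\tau = C$. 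The argument: if $c\in R^\tau$ were transcendental over $C$, then $R$ would contain the polynomial ring $C[c]$, and one can produce, from a suitable denominator, a proper nontrivial $\tau$-invariant ideal of $R$ (e.g.\ using that $R$ is finitely generated over $\K$ one bounds the ``size'' of constants), contradicting simplicity. Once $R^\tau$ is algebraic over $C$ and $C$ is algebraically closed, $R^\tau = C$.

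\medskip
\emph{Uniqueness.} Given two Picard--Vessiot rings $R, R'$ with fundamental solution matrices $Y, Y'$, form $R \otimes_\K R'$ with the diagonal $\tau$-action, pick a maximal $\tau$-invariant ideal, and let $T$ be the quotient. Then $T$ is a simple $\tau$-ring containing (images of) both $R$ and $R'$, and $Y, Y'$ become two fundamental matrices over $T$; hence $Y' = Y M$ for some $M\in\GL_\nu(T^\tau)$. Because $C$ is algebraically closed, $T^\tau = C$ (same argument as above, applied to $T$), so $M\in\GL_\nu(C)$; this identifies the copies of $R$ and $R'$ inside $T$, and a symmetry/minimality argument shows both inclusions $R\hookrightarrow T \hookleftarrow R'$ are isomorphisms, yielding a $\tau$-equivariant ring isomorphism $R\cong R'$.

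\medskip
The main obstacle is the middle step: controlling $R^\tau$. Without algebraic closedness of $C$ the constants can genuinely grow, so the proof must really use that hypothesis, and the cleanest route is via the decomposition of $R$ into a finite product of domains together with a bound on transcendence degree coming from finite generation over $\K$. This is exactly the point where the theory of \cite{vdPutSingerDifference} does the work, and for the purposes of this paper I would simply cite it rather than reproduce the combinatorics of $\tau$-simple rings.
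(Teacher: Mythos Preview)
The paper does not prove this proposition at all: it is stated with a bare citation to \cite[\S1.1]{vdPutSingerDifference}, and the only related content is Remark~\ref{rmk:PVring}, which sketches exactly the existence construction you give (form $\K[X,\det X^{-1}]$ with $\tau(X)=AX$ and quotient by a maximal $\tau$-invariant ideal). Your outline is the standard van der Put--Singer argument and is correct in spirit; since the paper explicitly chooses to cite rather than prove, your final paragraph---simply invoking \cite{vdPutSingerDifference} for the constants step---is precisely what the authors do, so there is nothing to compare beyond noting that your sketch faithfully reproduces the cited source.
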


\begin{remark}\label{rmk:PVring}
We will not need the explicit construction of $R$, but it is quite simple and it may be helpful to have it in mind:
one considers the ring of polynomials in the $\nu^2$ variables $X=(x_{i,j})$ with coefficients in $\K$.
Inverting $\det X$ and setting $\tau(X)=AX$, we obtain a ring $\K[X,\det X^{-1}]$ with an automorphism $\tau$.
Any of its quotients by a maximal $\tau$-invariant ideal is a Picard-Vessiot ring of  $\tau(\vec{y})=A\vec{y}$ over $\K$.
\par
It is important to notice that the ring $R$ does not need to be a domain. If $C$ is algebraically closed, 
we can say 
{more about its structure}, namely that
it can be written as a direct sum $R_1\oplus\dots\oplus R_r$, such that:
$R_i=e_i R$, for some $e_i\in R$ with $e_i^2=e_i$;
$R_i$ is a domain; 
$\tau$ acts transitively on the $R_i$'s, i.e., 
changing the order of the $R_i$'s and identifying $\{1,\dots,r\}$ with the elements of $\Z/r\Z$, we have $\tau(R_i)\subset R_{i+1}$.
See \cite[Cor.~1.16]{vdPutSingerDifference}.
\end{remark}

\begin{exa}\label{exa:rank1-homogeneous}
Let $a\in\K$, $a\neq 0$ and $R$ be the Picard-Vessiot ring of $\tau(y)=ay$. 
Then there exists $z\in R$ such that $\tau(z)=az$ and $R=\K[z, z^{-1}]$.
\end{exa}

\begin{exa}\label{exa:rank1inhomogeneous}
Let $a,f$ be non-zero elements of $\K$ and let us consider the functional equation $\tau(y)=ay+f$.
It can be rewritten as
$\tau(\vec{y})=\begin{pmatrix}
                a & f \\
                0 & 1
              \end{pmatrix}\vec{y}$.
This system has two linearly independent solution vectors,
so that an invertible matrix of solutions has the form $Y=\begin{pmatrix}
                                                            z & w \\
                                                            0 & 1
                                                          \end{pmatrix}$,
where $w,z$ are elements of a Picard-Vessiot ring $R$, with $\tau(z)=az$ and $\tau(w)=aw+f$.
Then $R=\K[z, z^{-1},w]$.
\end{exa}

\paragraph*{The Galois group}
We suppose that the field of constants $C$ of $\K$ is algebraically closed.
The Galois group~$G$ of $\tau(\vec{y})=A\vec{y}$ over $\K$ is defined to be the group $\Aut^\tau(R/\K)$
of automorphisms of rings $\varphi:R\to R$ that commute with $\tau$ and such that $\varphi_{\vert \K}$ is the identity map.
\par
Since $\varphi\in G$ leaves $\K$ invariant, the matrix $\varphi(Y)$ is
another invertible matrix of solutions of
$\tau(\vec{y})=A\vec{y}$.
It follows that
$\tau(Y^{-1}\varphi(Y))=Y^{-1}\varphi(Y)$ and hence that $Y^{-1}\varphi(Y)\in\GL_\nu(C)$.
In other words, we have a natural group morphism $G\to\GL_\nu(C)$.
It depends on the choice of $Y$ and a different choice gives a conjugated map.
The theorem below contains two crucial pieces of information:
First of all, $G$ is a geometric object, more precisely it can be identified with the $C$-points of a linear algebraic group.
Roughly, this is another way of saying that $G$ can be identified with a subgroup of matrices of $\GL_\nu(C)$, whose entries
and their determinant are exactly the points in an algebraic variety of
the affine space $\A^{\nu^2+1}_C$.
Secondly, an algebraic relation among the entries of $Y$
exists if and only if the dimension of $G$ is ``smaller than expected''.
These ideas can be formalized as follows:

\begin{theorem}[{\cite[Thm.~1.13 and Cor.~1.18]{vdPutSingerDifference}}]
\label{thm:tr.def=dim}
The morphism $G\to \GL_\nu(C)$, $\varphi\mapsto Y^{-1}\varphi(Y)$,
represents $G$ as the group of the $C$-points of a linear algebraic subgroup of $\GL_\nu(C)$. 
Moreover, the dimension of $G$ over~$C$ as an algebraic variety is
equal
to the transcendence degree of $R$ over $\K$, i.e., $\trdeg_\K R=\dim_C G$.
\end{theorem}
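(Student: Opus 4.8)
The plan is to run the classical argument of difference Galois theory (following \cite[\S1.1]{vdPutSingerDifference}): realize $G$ as the group of $C$-points of a closed subgroup scheme $\mathbf G\subseteq\GL_{\nu,C}$ obtained from the ``quotient of two fundamental matrices'', and then read both assertions off a single structural isomorphism. Set $S:=R\otimes_\K R$, equipped with the automorphism $\tau\otimes\tau$. Since $A$ has entries in $\K$, both $Y_1:=Y\otimes 1$ and $Y_2:=1\otimes Y$ are fundamental matrices for $\tau(\vec y)=A\vec y$ inside $S$, so $Z:=Y_1^{-1}Y_2\in\GL_\nu(S)$ satisfies $\tau(Z)=Z$, whence its entries lie in $S^\tau$. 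Let $D\subseteq S^\tau$ be the $C$-subalgebra generated by the entries of $Z$ and by $(\det Z)^{-1}$.

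The technical core is the claim that the multiplication map $R\otimes_C D\to S$, $r\otimes d\mapsto(r\otimes 1)d$, is an isomorphism of $\tau$-rings; it then follows that $S^\tau=D$ and that $D$ is finitely generated over $C$. Surjectivity is immediate, because $1\otimes Y=Y_1Z$ and $1\otimes(\det Y)^{-1}$ lie in the image and generate $S$ over $R\otimes 1$. I expect \emph{injectivity} to be the main obstacle: it uses in an essential way that $R$ is a simple difference ring whose constants $R^\tau=C$ form an algebraically closed field (Proposition~\ref{prop:PVring}), through the base-change lemma asserting that for such an $R$ and any commutative $C$-algebra $B$ with trivial action the assignment $\mathfrak b\mapsto R\otimes_C\mathfrak b$ is a bijection between ideals of $B$ and $\tau$-ideals of $R\otimes_C B$, with $(R\otimes_C B)^\tau=B$. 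Everything downstream is then formal.

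Granting the isomorphism $S\cong R\otimes_C D$, I would produce the algebraic-group structure on $\mathbf G:=\mathrm{Spec}\,D$ by iterating the construction: inside $R\otimes_\K R\otimes_\K R$ with fundamental matrices $Y_1,Y_2,Y_3$ one has $Z_{13}=Z_{12}Z_{23}$, together with $Z_{11}=\mathrm{Id}$ and $Z_{21}=Z_{12}^{-1}$, which (again via the identification of constants) endow $D$ with a comultiplication $D\to D\otimes_C D$, a counit and an antipode; thus $D$ is a commutative Hopf algebra over $C$, and the surjection $C[X,(\det X)^{-1}]\twoheadrightarrow D$, $X\mapsto Z$, presents $\mathbf G$ as a closed subgroup scheme of $\GL_{\nu,C}$, i.e.\ a linear algebraic group over $C$. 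To match it with $G$: a $C$-algebra homomorphism $\chi:D\to C$, extended along $R\otimes_C D\cong S$ to a $\K$-algebra homomorphism $S\to R$ and composed with $r\mapsto 1\otimes r$, yields a $\tau$-equivariant $\K$-endomorphism $\varphi_\chi$ of $R$ with $\varphi_\chi(Y)=Y\cdot\chi(Z)$; running the same with $\chi^{-1}$ in the group $\mathbf G(C)$ shows $\varphi_\chi\in G$. Conversely, for $\varphi\in G$ one already knows $g:=Y^{-1}\varphi(Y)\in\GL_\nu(C)$ (displayed in the text preceding the statement); applying $\varphi\otimes\mathrm{id}_R$ to $S$ sends $Z\mapsto g^{-1}Z$ and preserves $D$, forcing $g\in\mathbf G(C)$. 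These maps are mutually inverse, the composite $G\to\mathbf G(C)\hookrightarrow\GL_\nu(C)$ is exactly $\varphi\mapsto Y^{-1}\varphi(Y)$, and it is injective since $R=\K[Y,(\det Y)^{-1}]$ and $\varphi$ fixes $\K$; this is the concrete description of $G$ entering the applications (e.g.\ in the rank-one setting of Example~\ref{exa:rank1inhomogeneous}).

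For the dimension, I would count transcendence degrees. Since transcendence degree is additive under tensor products of finitely generated algebras over a field, on one side $\trdeg_\K S=2\,\trdeg_\K R$, while writing $S\cong R\otimes_C D\cong R\otimes_\K(\K\otimes_C D)$ gives $\trdeg_\K S=\trdeg_\K R+\trdeg_\K(\K\otimes_C D)=\trdeg_\K R+\trdeg_C D$ (field base change preserves Krull dimension; and when $R$ is not a domain one argues over its finitely many components, which by Remark~\ref{rmk:PVring} are permuted by $\tau$ and hence all carry the same transcendence degree). Comparing the two expressions yields $\trdeg_\K R=\trdeg_C D$. Finally $\trdeg_C D=\dim_C\mathbf G=\dim_C G$, the first equality because $D$ is a finitely generated $C$-algebra whose spectrum, a group scheme over a field of characteristic zero, is smooth and equidimensional, and the second because $C$ is algebraically closed, so $G=\mathbf G(C)$ is Zariski dense in $\mathbf G$. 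This gives $\trdeg_\K R=\dim_C G$; and since $R$ is generated over $\K$ by the entries of the fundamental matrix $Y$, a nontrivial algebraic relation among them holds exactly when $\dim_C G$ drops below the generic value~$\nu^2$.
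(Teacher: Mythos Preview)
The paper does not give its own proof of this statement: it is quoted without proof from \cite[Thm.~1.13 and Cor.~1.18]{vdPutSingerDifference}, as the attribution in the theorem header indicates. There is therefore nothing in the paper to compare your argument against.

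That said, your sketch is the standard argument from the cited reference and is correct in outline. The structural isomorphism $R\otimes_C D\cong R\otimes_\K R$, obtained from the base-change lemma for simple difference rings with algebraically closed constants, is exactly Lemma~1.11/Theorem~1.13 of \cite{vdPutSingerDifference}; the Hopf structure on $D$ and the bijection $G\leftrightarrow\mathbf G(C)$ are as you describe. Your transcendence-degree count is a reformulation of the torsor statement in \cite[Cor.~1.18]{vdPutSingerDifference}: there one shows that $\mathrm{Spec}\,R$ is a $G_\K$-torsor over $\K$, whence $\trdeg_\K R=\dim G_\K=\dim_C G$; this packages the additivity argument and the component-by-component reduction (your parenthetical about Remark~\ref{rmk:PVring}) more cleanly than the raw transcendence-degree bookkeeping, but the content is the same.
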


\begin{exa}\label{exa:rank1-homogeneousBIS}
We consider an equation of the form $\tau(y)=ay$, as in Example~\ref{exa:rank1-homogeneous}. 
Its Galois group $G$ is represented through its action on a solution $z\in R$. 
Since any element $\varphi$ of $G$ must send $z$ to another solution of $\tau(y)=ay$,
we have $\varphi(z)=c_\varphi z$, for some $c_\varphi\in C$. 
{As noticed in Example~\ref{exa:rank1-homogeneous}, $R=\K[z,z^{-1}]$, hence $z$ is invertible in $R$, which forces $c_\varphi$ to be non-zero.}
Therefore $G$ is an algebraic subgroup of the multiplicative group $C^*$ of the field $C$. 
The solution $z$ is transcendental over $\K$ if and only if $G=C^*$. 
Since the only proper algebraic subgroups of $C^*$ are the groups of roots of unity, 
$z$ is algebraic over $\K$ if and only if $G$ is a group of roots of unity, i.e., if and only if 
there exists a positive integer $N$ such that $c_\varphi^N=1$, for any 
$\varphi\in G$. Therefore, for any $\varphi\in G$, we have $\varphi(z^N)=c_\varphi^Nz^N=z^N$.
As we will see in Theorem~\ref{thm:Galois-correspondence}, the Galois correspondence implies that $z^N\in \K$.
\par
{For further reference, we point out the following fact. 
As explained in Remark~\ref{rmk:PVring}, the Picard-Vessiot ring $R$ is a quotient of 
a polynomial ring $\K[X,X^{-1}]$ by an ideal invariant under the action of $\tau$.
If $z$ is transcendental over $\K$, such an ideal is necessarily $0$ and $R$ is a domain.}
\end{exa}

\begin{exa}\label{exa:rank1inhomogeneousBIS}
Let us go back to Example \ref{exa:rank1inhomogeneous}. We consider the associated Galois group $G$ over~$C$. Any $\varphi\in G$ must map
a matrix of solutions of the functional equation into another matrix of solutions, therefore there exist
$c_\varphi,d_\varphi\in C$ such that $\varphi(z)=c_\varphi z$ and $\varphi(w)=w+d_\varphi z$.
In other words, we must have:
$\varphi\begin{pmatrix}
          z & w \\
          0 & 1
        \end{pmatrix}=\begin{pmatrix}
          z & w \\
          0 & 1
        \end{pmatrix}\begin{pmatrix}
          c_\varphi & d_\varphi \\
          0 & 1
        \end{pmatrix}$.
Therefore, $G$ is a subgroup
of
$\wtilde G\coloneqq  \l\{\begin{pmatrix}
     c & d \\
     0 & 1
   \end{pmatrix}:c,d\in C, c\neq 0\r\}\subset\GL_2(C)$.
According to whether $z$ and $w$ are algebraically dependent or not, either
$G$ will be a proper linear algebraic subgroup of~$\wtilde G$, or $G=\wtilde G$.
\end{exa}

\paragraph*{The total Picard-Vessiot ring}
We have seen in Remark \ref{rmk:PVring} that $R$ is not a domain, therefore we cannot consider its field of fractions.
However, it is a direct sum of domains $R_i$ 
{and}
we can consider its ring~$\L$ of total fractions, which is the direct sum of
the fields of fractions $\L_i$ of the $R_i$'s.
This means that $\L=\L_1\oplus\dots\oplus \L_r$,
where, for any $i=1,\dots,r$, $\L_i$ is a field and $\tau^r$ induces an automorphism of $\L_i$.
Moreover, $\tau$ acts transitively on the $\L_i$'s, i.e., 
$\tau(\L_i)=\L_{i+1}$, with the same notation as in Remark~\ref{rmk:PVring}.
We will call $\L$ the total Picard-Vessiot ring of $\tau(\vec{y})=A\vec{y}$.

The action of the Galois group $\Aut^\tau(R/\K)$ naturally extends from $R$ to $\L$.
See \cite[\S1.3]{vdPutSingerDifference} for details.
For further reference, we recall the following characterization of total Picard-Vessiot rings.

\begin{proposition}[{\cite[Prop.~1.23]{vdPutSingerDifference}}]
\label{prop:totalPVring}
In the notation above, the total Picard-Vessiot ring $\L$ of $\tau(\vec{y})=A\vec{y}$ is uniquely determined, up to
an isomorphism of $\K$-algebras commuting with $\tau$, by the following properties:
\begin{enumerate}
  \item $\L$ has no nilpotent elements and any non-zero divisor of $\L$ is invertible.
  \item $\L^\tau=C$.
  \item The system $\tau(\vec y)=A\vec y$
  has a matrix solution in $\GL_\nu(\L)$.
  \item $\L$ is minimal with respect to the inclusion and the three previous properties.
\end{enumerate}
In particular, any $\tau$-ring satisfying the first three properties above
contains a copy of the Picard-Vessiot ring~$R$ of $\tau(\vec y)=A\vec y$ over $\K$.
\end{proposition}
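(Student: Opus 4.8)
The plan is to follow the architecture of \cite[\S1.1--1.3]{vdPutSingerDifference}: first check that the total ring of fractions $\L=\mathrm{Tot}(R)$ of the Picard--Vessiot ring $R$ is \emph{one} $\tau$-ring with properties (1)--(3); then prove the rigidity statement of the last sentence, namely that \emph{every} $\tau$-ring satisfying (1)--(3) contains a copy of $R$; and finally read off the minimality (4) and the uniqueness from it.

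\emph{Step 1 ($\L$ satisfies (1)--(3)).} Write $R=R_1\oplus\dots\oplus R_r$ and $\L=\L_1\oplus\dots\oplus\L_r$ with $\L_i=\mathrm{Frac}(R_i)$, as in Remark~\ref{rmk:PVring}. Property (1) is immediate: a finite direct sum of fields is reduced, and an element is a non-zero-divisor exactly when each of its components is non-zero, which makes it invertible. Property (3) holds because the fundamental matrix $Y\in\GL_\nu(R)$ from the definition already lies in $\GL_\nu(\L)$. Property (2), $\L^\tau=C$, is the only non-formal point: since $R$ is $\tau$-simple, $\tau$ cycles the idempotents $e_1,\dots,e_r$, so $x\mapsto e_1x$ identifies $\L^\tau$ with $\L_1^{\tau^r}$; and $\L_1$, equipped with $\tau^r$, is the Picard--Vessiot field over $(\K,\tau^r)$ of the iterated system $\tau^r(\vec y)=\bigl(\tau^{r-1}(A)\cdots\tau(A)A\bigr)\vec y$, whose constants stay equal to $C$ precisely because $C$ is algebraically closed (for the applications in this paper $C=\C$, so this hypothesis is harmless).

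\emph{Step 2 (rigidity) and Step 3 (minimality and uniqueness).} Let $B$ be a $\tau$-ring with properties (1)--(3) and fix a fundamental matrix $Z\in\GL_\nu(B)$. The sub-$\tau$-ring $D:=\K[Z,(\det Z)^{-1}]\subseteq B$ is reduced and generated by a fundamental matrix; the structure theory of simple difference rings \cite[Cor.~1.16]{vdPutSingerDifference} then lets one exhibit, along the idempotents of $D$, a $\tau$-simple $\K$-subalgebra of $D$ generated by a fundamental solution of $\tau(\vec y)=A\vec y$, i.e.\ a Picard--Vessiot ring over $\K$; by uniqueness of Picard--Vessiot rings when $C$ is algebraically closed (Proposition~\ref{prop:PVring}) this subalgebra is $\tau$-isomorphic to $R$, which realizes a $\tau$-equivariant $\K$-embedding $R\hookrightarrow B$ --- equivalently, $Z$ and the image of $Y$ differ by a matrix in $\GL_\nu(C)$, using $B^\tau=C$. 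For minimality: given any $\tau$-subring $\L'\subseteq\L$ with (1)--(3), the embedding just produced gives $R\subseteq\L'$; a non-zero-divisor $r$ of $R$ has all components in $R_1\oplus\dots\oplus R_r$ non-zero, hence is a unit of $\L$, hence a non-zero-divisor of $\L'$, hence a unit of $\L'$ by (1); therefore $\L=R[\,r^{-1}:r\text{ a non-zero-divisor of }R\,]\subseteq\L'$ and $\L'=\L$. For uniqueness: if $B$ satisfies (1)--(4), then $R\hookrightarrow B$ by rigidity, and --- this is where the minimality (4) is actually used, together with the block structure and (1) --- every non-zero-divisor of $R$ becomes invertible in $B$, so $\L=\mathrm{Tot}(R)$ embeds $\tau$-equivariantly in $B$ as a subring with (1)--(3); minimality of $B$ forces $B=\L$, and the remaining freedom (the choice of $Y$, living in $\GL_\nu(C)$) changes this embedding only by a ring isomorphism commuting with $\tau$.

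\emph{Main obstacle.} The bookkeeping with idempotents and property (1) is routine; the content is concentrated in two constancy-type facts, each of which genuinely needs $C$ algebraically closed: that total fractions introduce no new constants ($\L^\tau=C$ in Step 1), and that the $\tau$-ring generated by a fundamental matrix in $B$ really contains an honest Picard--Vessiot ring, i.e.\ has a $\tau$-simple piece isomorphic to $R$ (Step 2). These are exactly the points supplied by the preceding development in \cite[\S1.1--1.3]{vdPutSingerDifference} (existence and uniqueness of $R$, the decomposition $R=\bigoplus_i R_i$, and the no-new-constants principle), so a fully self-contained proof would have to redevelop that machinery.
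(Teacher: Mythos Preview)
The paper does not prove this proposition: it is simply cited from \cite[Prop.~1.23]{vdPutSingerDifference}, with no argument given. So there is no ``paper's proof'' to compare against; I can only assess your sketch on its own merits.

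Your architecture (Step~1: $\L=\mathrm{Tot}(R)$ satisfies (1)--(3); Step~2: rigidity; Step~3: minimality and uniqueness) is the right one, and Step~1 is fine. The difficulty is concentrated in Step~2, and there your argument is not correct as written. The actual content of \cite[Prop.~1.23]{vdPutSingerDifference} is that $D:=\K[Z,(\det Z)^{-1}]\subset B$ is \emph{itself} $\tau$-simple (hence a Picard--Vessiot ring, hence isomorphic to $R$ by Proposition~\ref{prop:PVring}). You instead claim to locate a $\tau$-simple \emph{subalgebra} of $D$ ``along the idempotents of $D$'' by invoking \cite[Cor.~1.16]{vdPutSingerDifference}; but that corollary only describes the block decomposition of a ring \emph{already known} to be $\tau$-simple, and says nothing about producing $\tau$-simple pieces inside an arbitrary reduced $\tau$-ring. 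Your ``equivalently, $Z$ and the image of $Y$ differ by a matrix in $\GL_\nu(C)$'' is circular: comparing $Y$ and $Z$ requires a common ambient $\tau$-ring, which is precisely the embedding $R\hookrightarrow B$ you are trying to construct. The genuine argument that $D$ is $\tau$-simple uses that $D$ is reduced, finitely generated over~$\K$, and has $D^\tau=C$ (inherited from $B^\tau=C$); it is short but not formal, and is carried out in \cite[\S1.2--1.3]{vdPutSingerDifference}.

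There is also a gap in the uniqueness part of Step~3. You assert that a non-zero-divisor of $R$ is invertible in an abstract $B$ satisfying (1)--(4), but a non-zero-divisor of a subring need not remain a non-zero-divisor in the ambient ring, so property~(1) of $B$ does not apply directly. One has to use the idempotent decomposition of $R$ together with the minimality~(4) of $B$ more carefully (in fact, once Step~2 is done properly, $D\cong R$ gives $R\subset B$ with the \emph{same} idempotents, and then the argument goes through). You do flag both points in your ``Main obstacle'' paragraph, so the issue is less a blind spot than that the body of the sketch does not match your own diagnosis of where the work lies.
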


\paragraph*{The Galois correspondence}
We consider the set $\cF$ of all $\tau$-stable rings $F\subset\L$, such that $\K\subset F$ and
any element of $F$ is either a zero divisor or a unit in $F$,
and the set $\cG$ of all linear algebraic subgroups of $G$.
For any $H\in\cG$, we set $\L^H=\{f\in \L:\varphi(f)=f \; \text{for all }\varphi\in H\}$ and, for any $F\in\cF$, we set $H_F\coloneqq  \{\varphi\in\cG: \varphi(f)=f
\; \text{for all } f\in F\}$.

\begin{theorem}[Galois correspondence {\cite[Thm.~1.29]{vdPutSingerDifference}}]
\label{thm:Galois-correspondence}
In the notation above, the following two maps are 
each other's inverses:
\begin{equation*}
\begin{array}{ccc}
\cG&\to&\cF\\
H&\mapsto& \L^H
\end{array}
\qquad\hbox{and}\qquad
\begin{array}{ccc}
\cF&\to&\cG\\
F&\mapsto&H_F
\end{array}.
\end{equation*}
In particular, $\L^H=\K$ if and only if $H=G$. 
\end{theorem}

\begin{remark}
{We would like to warn the reader that the normal subgroups of $G$ do not satisfy the 
property that one would expect. See \cite[Cor.~1.30]{vdPutSingerDifference}. 
If $H$ is a normal subgroup of $G$ then the group $\Aut^\tau(\L^H/\K)$ is 
isomorphic to $G/H$. 
Moreover, $H$ is a normal subgroup if and only if for any element 
$f\in\L^H$ there exists $\varphi\in \Aut^\tau(\L^H/\K)$ such that $\varphi(f)\neq f$.}
\end{remark}

We start mentioning the following immediate consequence of the theorem above,
which illustrates how the transcendental nature of a solution to the homogeneous equation transfers to solutions of the inhomogeneous system.
This statement is well known to specialists.

\begin{corollary}
\label{cor:rank1inhomogeneous}
In the notation of Example \ref{exa:rank1inhomogeneousBIS},
if $z$ is transcendental over $\K$ and $w\not\in \K$, then also $w$ is transcendental over $\K$.
\end{corollary}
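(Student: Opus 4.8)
The plan is to argue by contradiction. Assume that $z$ is transcendental over $\K$, that $w\notin\K$, and --- contrary to what we want --- that $w$ is \emph{algebraic} over $\K$. Since the Picard--Vessiot ring is $R=\K[z,z^{-1},w]$ (Example~\ref{exa:rank1inhomogeneous}), and $z$ is transcendental over $\K$ while $w$ is algebraic over $\K\subseteq\K(z)$, one gets $\trdeg_\K R=1$, hence $\dim_C G=1$ by Theorem~\ref{thm:tr.def=dim}. Recall from Example~\ref{exa:rank1inhomogeneousBIS} that every $\varphi\in G$ acts by $\varphi(z)=c_\varphi z$ and $\varphi(w)=w+d_\varphi z$ for uniquely determined $c_\varphi\in C^\ast$ and $d_\varphi\in C$, and that $G$ is an algebraic subgroup of $\widetilde G=\{\varphi:(c_\varphi,d_\varphi)\in C^\ast\times C\}$. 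The strategy is to determine which $1$-dimensional subgroups of $\widetilde G$ can occur, and then to extract from the Galois correspondence a $\K$-linear relation between $w$ and $z$ that forces $w\in\K$, a contradiction.

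First I would examine $U:=\{\varphi\in\widetilde G:c_\varphi=1\}$, the unipotent part of $\widetilde G$, which is isomorphic to $\G_a$ and normal in $\widetilde G$; then $N:=G\cap U$ is a normal algebraic subgroup of $G$, and since $C$ has characteristic zero the only algebraic subgroups of $\G_a$ are $\{\mathrm{id}\}$ and $U$. If $N=U$, i.e.\ $U\subseteq G$, then by Theorem~\ref{thm:Galois-correspondence} the ring $\L^U$ is the total Picard--Vessiot ring of a $\tau$-system whose Galois group is $G/U$, so $\trdeg_\K\L^U=\dim_C(G/U)=0$; but every element of $U$ fixes $z$, so $z\in\L^U$ would be at once transcendental over $\K$, a contradiction. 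Hence $N=\{\mathrm{id}\}$, which means that the projection $\pi\colon\varphi\mapsto c_\varphi$ is injective on $G$; as $\dim_C G=1$, the image $\pi(G)$ is a $1$-dimensional algebraic subgroup of $\G_m$, hence equals $\G_m$, and $\pi$ restricts to an isomorphism $G\xrightarrow{\ \sim\ }\G_m$.

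It remains to make this isomorphism explicit. Writing $\varphi_c\in G$ for the unique element with $c_{\varphi_c}=c$, the assignment $c\mapsto\varphi_c$ is a morphism of algebraic groups, so $d_{\varphi_c}=\phi(c)$ for a Laurent polynomial $\phi$ obeying the cocycle identity $\phi(c_1c_2)=\phi(c_1)+c_1\phi(c_2)$; a short computation (or invoking the conjugacy of the complements of $U$ in the solvable group $\widetilde G$) gives $\phi(c)=e(1-c)$ for some fixed $e\in C$. Consequently $\varphi_c(w+ez)=\bigl(w+e(1-c)z\bigr)+ecz=w+ez$ for all $c$, so $u:=w+ez\in R\subseteq\L$ is fixed by all of $G$, whence $u\in\L^G=\K$ by Theorem~\ref{thm:Galois-correspondence}. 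Thus $w=u-ez$ with $u\in\K$ and $e\in C\subseteq\K$: if $e=0$ then $w=u\in\K$, contradicting $w\notin\K$; if $e\neq0$ then $z=(u-w)/e$ lies in $\K(w)$, which is algebraic over $\K$, contradicting the transcendence of $z$ over $\K$. In either case we reach a contradiction, and therefore $w$ is transcendental over $\K$.

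The transcendence-degree bookkeeping and the cocycle computation are routine; the genuine point --- and the main obstacle --- is the structural analysis of $G$ inside $\widetilde G\cong\G_m\ltimes\G_a$: ruling out $U\subseteq G$ (which is precisely where the transcendence of the homogeneous solution $z$ re-enters, through $\L^U$), and then recognizing that a $\G_m$-like $1$-dimensional subgroup is forced to be a single $U$-conjugate of the diagonal torus. That last fact is what manufactures the invariant $w+ez$, and it is essentially the only place where characteristic zero of the constants plays a role.
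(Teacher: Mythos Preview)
Your proof is correct, but it is vastly more elaborate than the paper's argument, which occupies three lines. The paper simply observes that if $w$ is algebraic over $\K$ and $w\notin\K$, then by the Galois correspondence ($\L^G=\K$) there exists $\varphi\in G$ with $\varphi(w)\neq w$, i.e.\ $d_\varphi\neq 0$; since $\varphi$ is a $\K$-automorphism, $\varphi(w)$ satisfies the same minimal polynomial as $w$ and is therefore algebraic over $\K$, whence $d_\varphi z=\varphi(w)-w$ is algebraic over $\K$, contradicting the transcendence of~$z$. No dimension count, no classification of subgroups of $\widetilde G$, no cocycle computation is needed.

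What your route does that the paper's does not is to pin down the full structure of $G$ under the hypotheses: you effectively prove that $G$ is a $U$-conjugate of the diagonal torus in $\G_m\ltimes\G_a$, and you exhibit an explicit $G$-invariant $w+ez$. This is genuinely more information, and your cocycle argument (or the equivalent observation that all complements of $U$ in $\widetilde G$ are $U$-conjugate) is the right tool if one actually wants that description. But for the bare transcendence statement it is overkill: the paper bypasses all of it by noticing that a \emph{single} Galois automorphism that moves $w$ already exhibits $z$ as a $C$-multiple of the difference of two algebraic elements. In particular, the paper never invokes Theorem~\ref{thm:tr.def=dim}, never computes $\dim_C G$, and never uses the normal-subgroup clause of Theorem~\ref{thm:Galois-correspondence}; it uses only $\L^G=\K$ and the trivial fact that $\K$-automorphisms preserve algebraicity over~$\K$.
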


\begin{proof}
We suppose that $w$ is algebraic over $\K$. Since $w\not\in \K$, there exists $\varphi\in G$ such that
$\varphi(w)=d_\varphi z+w\neq w$, i.e., $d_\varphi\neq 0$. The element $\varphi(w)$ is necessarily algebraic
over $\K$, and therefore $\varphi(w)-w=d_\varphi z$ is also algebraic over $\K$.
\end{proof}

The purpose of the following subsections is to give a proof of an analogous statement for differential
transcendence.


\subsection{Application to differential transcendence}
\label{subsec:DiffTransc}

\paragraph*{The $\partial$-Picard-Vessiot ring}

Now, we suppose that there exists a derivation $\partial$ on $\K$ 
commuting with~$\tau$. 

\begin{exa}\label{exa:Klazar-derivative}
In the situation of Example~\ref{exa:Klazar}, 
for $\tau(f(t))=f\l(\frac{t}{1+t}\r)$, we can take $\partial\coloneqq  t^2\frac{d}{dt}$.
\end{exa}

\begin{proposition}[{\cite{Wibdesc} and
\cite[Prop.~1.16, Rem.~1.18, Cor.~1.19]{diviziohardouinPacific}}]
\label{prop:existence of PV-Wibmer}
For any linear system of the form $\tau(\vec{y})=A\vec{y}$, with $A\in\GL_\nu(\K)$, there exists
a $\K$-algebra $\cR$, equipped with an extension of~$\tau$ and of~$\partial$,
preserving the commutation, such that:
    \begin{enumerate}
    \item there exists $Z \in\GL_\nu(\cR)$ such that $\tau(Z)=AZ$;
    \item $\cR$ is generated over $\K$ by the entries of $Z$, $\frac{1}{\det(Z)}$ and all their derivatives;
    \item $\cR$ is $\tau$-simple.
    \end{enumerate}
Moreover, the total ring of fractions of $\cR$ satisfies the first three properties of Proposition \ref{prop:totalPVring}.
\end{proposition}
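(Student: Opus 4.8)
The plan is to imitate, in the differential setting, the construction of the ordinary Picard--Vessiot ring recalled in Remark~\ref{rmk:PVring}. First I would form
\[
\cS:=\K\{X\}\bigl[\det(X)^{-1}\bigr],
\]
the ring of differential polynomials over $\K$ in a matrix $X=(x_{ij})_{1\le i,j\le\nu}$ of $\nu^2$ differential indeterminates, with the inverse of the order-zero determinant adjoined; it carries the unique derivation extending $\partial$ on $\K$. One then extends $\tau$ to a ring automorphism of $\cS$ by setting $\tau(X):=AX$ on the order-zero variables (legitimate, and invertible, precisely because $A\in\GL_\nu(\K)$), $\tau(\det(X)^{-1}):=\det(AX)^{-1}$, and $\tau(\partial^{k}x_{ij}):=\partial^{k}(\tau x_{ij})$ on the higher-order ones; by construction $\tau$ and $\partial$ commute on $\cS$.

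Next, using Zorn's lemma, choose an ideal $\fm\subset\cS$ maximal among the proper $\tau$-stable ideals and put $\cR:=\cS/\fm$. \emph{Provided} $\partial$ descends to $\cR$ (i.e.\ $\partial(\fm)\subseteq\fm$), the three required properties are immediate: the image $Z$ of $X$ lies in $\GL_\nu(\cR)$ (its determinant is a unit) and satisfies $\tau(Z)=AZ$; $\cR$ is generated over $\K$ by the entries of $Z$, by $\det(Z)^{-1}$ and by all their iterated $\partial$-derivatives; $\cR$ is $\tau$-simple by maximality of $\fm$; and $\tau$, $\partial$ still commute on $\cR$. So everything reduces to checking that $\fm$ is $\partial$-stable.

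This is the point I expect to be the main obstacle, and it is genuinely nontrivial. The natural attempt: the $\partial$-ideal generated by $\fm$ inside $\cS$ is again $\tau$-stable, because $\tau(\fm)=\fm$ together with $\tau\partial=\partial\tau$ force $\tau(\partial^{k}\fm)=\partial^{k}\fm$ for every $k\geq0$, and a ring automorphism preserves the ideal generated by a $\tau$-stable set; hence by maximality of $\fm$ this $\partial$-ideal either equals $\fm$ — so that $\fm$ is $\partial$-stable and we are done — or equals all of $\cS$. Ruling out this second, degenerate alternative is exactly the non-formal content of the results of Wibmer~\cite{Wibdesc} and of Di~Vizio--Hardouin~\cite{diviziohardouinPacific} that we invoke: morally it proceeds by descent, first producing the $\partial$-$\tau$-ring after enlarging the constants to a differentially closed field, where the analogous construction (via a maximal $\tau\partial$-ideal) is unobstructed and yields a $\tau$-simple ring, and then descending back to $\K$ while controlling the $\tau$-constants. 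I would take this step as a black box rather than reprove it.

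Finally, for the ``moreover'' clause, write $\L$ for the total ring of fractions of $\cR$. Property~(1) of Proposition~\ref{prop:totalPVring} is automatic: the nilradical of $\cR$ is a $\tau$-ideal, hence $0$ by $\tau$-simplicity (as $1$ is not nilpotent), so $\L$ is reduced, and every non-zero-divisor of $\L$ is a unit by definition of the total ring of fractions. Property~(3) holds since $Z\in\GL_\nu(\cR)\subseteq\GL_\nu(\L)$ is a fundamental solution matrix. And property~(2), $\L^{\tau}=C$, follows by the same $\tau$-simplicity argument as in the ordinary difference theory (\cite{vdPutSingerDifference}; see also Propositions~\ref{prop:PVring} and~\ref{prop:totalPVring}), once one knows — again a by-product of the descent above — that adjoining the derivatives has not enlarged the $\tau$-constants.
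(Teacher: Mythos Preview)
The paper does not give its own proof of this proposition: it is imported wholesale from the cited references \cite{Wibdesc} and \cite{diviziohardouinPacific}, and the statement is used as a black box. Your sketch is a faithful outline of how those references proceed --- form the differential polynomial ring $\K\{X\}[\det(X)^{-1}]$, extend $\tau$ so that $\tau\partial=\partial\tau$, pass to a maximal $\tau$-ideal, and then confront the genuine difficulty of arranging that this ideal be $\partial$-stable --- and you correctly flag that last point as the nontrivial content supplied by Wibmer's descent argument (existence over a differentially closed extension of constants, then descent back to~$\K$ using that $C$ is algebraically closed). Your treatment of the ``moreover'' clause is also the standard one. So there is nothing to correct; your write-up simply unpacks what the paper takes on citation.
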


We will call the ring $\cR$ the $\partial$-Picard-Vessiot ring of $\tau(\vec{y})=A\vec{y}$ over $\K$, without giving a formal definition.
Applying $\partial^n$ to the system $\tau(\vec{y})=A\vec{y}$ for
any positive integer $n$, we can consider the difference system:
{\begin{equation}\label{eq:prolongation}
\tau(\vec{y})=\l(\begin{array}{cccc}
A & \binom{n}{1}\partial(A) & \cdots & \binom{n}{n}\partial^n(A)\\
0 & A &\ddots  &\vdots  \\
\vdots & \ddots & \ddots & \binom{n}{1}\partial(A) \\
0 & \cdots & 0 & A
\end{array}\r)\vec{y},\quad 
\hbox{with solution~}
\l(\begin{array}{cccc}
Z & \partial(Z) & \cdots & \partial^n(Z)\\
0 & Z &\ddots  &\vdots  \\
\vdots & \ddots~~ & \ddots & \partial(Z)\\
0 & \cdots & 0 & Z
\end{array}\r).
\end{equation}}
The following corollary is hidden in the proof of  
\cite[Prop.~1.16]{diviziohardouinPacific}.
Since $\cR$ is generated by $Z$ and its derivatives, it is a 
fairly natural consequence of Proposition~\ref{prop:totalPVring}. 
{We do not prove it, however we point out that one would mainly need to
prove that the rings generated by $Z$, its derivatives up to a certain order and $Z^{-1}$ are 
$\tau$-simple.}

\begin{corollary}\label{cor:existence of PV-Wibmer}
For any integer $n\geq 0$, the ring $\cR$ contains a copy of the Picard-Vessiot ring of \eqref{eq:prolongation}.
\end{corollary}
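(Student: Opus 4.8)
The plan is to realize the required copy directly inside $\cR$, by applying the last assertion of Proposition~\ref{prop:totalPVring} not to the system $\tau(\vec y)=A\vec y$ itself but to its prolongation~\eqref{eq:prolongation}, exploiting that $\cR$ already contains an explicit fundamental matrix of the latter.

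Concretely, I would first pass to the total ring of fractions $\L$ of $\cR$. By Proposition~\ref{prop:existence of PV-Wibmer}, $\L$ has no nilpotent element, every non-zero divisor of $\L$ is a unit, and $\L^\tau=C$; the first two of these are properties of the $\tau$-ring $\L$ alone and do not refer to any system. It then remains to check that \eqref{eq:prolongation} admits a solution matrix in $\GL_{\nu(n+1)}(\L)$: such a matrix is the block-triangular $\what Z$ assembled from $Z,\partial Z,\dots,\partial^n Z$ that is displayed in \eqref{eq:prolongation}, and its determinant equals, up to a non-zero constant, $\det(Z)^{n+1}$, hence is a unit already in $\cR$ since $\det(Z)^{-1}\in\cR$. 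Thus $\L$ satisfies the first three properties of Proposition~\ref{prop:totalPVring} relative to the system~\eqref{eq:prolongation}, and the ``in particular'' part of that proposition produces, inside $\L$, a copy $R_n$ of the Picard-Vessiot ring of \eqref{eq:prolongation} over $\K$; being a Picard-Vessiot ring, $R_n$ has the form $R_n=\K[Y,\det(Y)^{-1}]$ for some fundamental matrix $Y\in\GL_{\nu(n+1)}(R_n)$ of~\eqref{eq:prolongation}.

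Finally, I would descend $R_n$ from $\L$ into $\cR$. Since $Y$ and $\what Z$ are both fundamental matrices of \eqref{eq:prolongation}, the matrix $D:=\what Z^{-1}Y$ is $\tau$-invariant, hence $D\in\GL_{\nu(n+1)}(\L^\tau)=\GL_{\nu(n+1)}(C)$; as $C\subseteq\K\subseteq\cR$ and $\what Z\in\GL_{\nu(n+1)}(\cR)$, the identity $Y=\what Z\,D$ shows that $Y\in\GL_{\nu(n+1)}(\cR)$ and that $\det(Y)$ is a unit of $\cR$, whence $R_n=\K[Y,\det(Y)^{-1}]\subseteq\cR$. I do not expect a genuine obstacle, as Proposition~\ref{prop:totalPVring} does the heavy lifting; the only points requiring a little care are the invertibility of $\what Z$ over $\cR$ (that is, that $\det(\what Z)$ is, up to a non-zero constant, a power of $\det(Z)$, which relies on $\what Z$ being block-triangular with diagonal blocks conjugate to $Z$) and the elementary remark that two fundamental matrices of \eqref{eq:prolongation} differ by a matrix with entries in $C\subseteq\cR$, which is exactly what makes the passage from $\L$ back to $\cR$ work.
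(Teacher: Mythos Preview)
Your argument is correct and supplies the details that the paper omits: in the text, the corollary is not proved but simply attributed to the proof of \cite[Prop.~1.16]{diviziohardouinPacific}, with the remark that it is ``quite natural'' since $\cR$ is generated by $Z$ and its derivatives. Your route through the total ring of fractions $\L$, invoking the ``in particular'' clause of Proposition~\ref{prop:totalPVring} for the prolonged system, and then descending back to $\cR$ via the constant matrix $D=\what Z^{-1}Y$, is exactly the standard way to make this precise.

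One minor remark: the fundamental matrix displayed in \eqref{eq:prolongation} appears to have a typographical slip (the diagonal blocks should be $Z$, not $\partial^n(Z)$; you can check this already for $n=1$). Your proof implicitly uses the correct matrix, since you rely on $\det(\what Z)$ being, up to sign, $\det(Z)^{n+1}$; with $\partial^n(Z)$ on the diagonal this would fail, as $\det(\partial^n Z)$ need not be a unit in $\cR$. So rather than ``diagonal blocks conjugate to $Z$'', just say the diagonal blocks equal $Z$.
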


Before dealing with the general case of linear $\tau$-difference equations $\tau(y)=ay+f$ of order $1$,
we study the particular cases $f=0$ and $a=1$.

\begin{notation}\label{notation:algebraF}
From now on, $F$ will be a $\K$-algebra with no nilpotent elements, and such that any element is either a zero divisor, or invertible.
We suppose that $F$ is equipped with an extension of $\tau$ and of $\partial$, preserving the commutation, and that $F^\tau=C$.
Of course, it does not need to be the same algebra in all the statements.
\par
Proposition~\ref{prop:totalPVring} thus implies that 
$F$ contains a copy of the (total) Picard-Vessiot ring of
\eqref{eq:prolongation} for any $n\geq 0$. They form an 
ascending chain of subrings of $F$ and their union  
coincides with $\cR$ (see Corollary~\ref{cor:existence of PV-Wibmer}).
\par
We say that an element of $F$ is differentially algebraic over $\K$ if it satisfies an algebraic differential equation (with respect to the derivation $\partial$) with coefficients in $\K$,
and that it is differentially transcendental otherwise.
\end{notation}

\begin{proposition}\label{prop:D-transcendecy-rank1-inhomo}
Let  $f$ be a non-zero element of $\K$, and let $w\in F$ be such that $\tau(w)=w+f$.
Then the following assertions are equivalent:
            \begin{enumerate}
              \item $w$ is differentially algebraic over $\K$.
              \item  There exist a non-negative integer $n$,
            $\a_0,\dots,\a_n\in C$ (not all zero) 
            and $g\in \K$ such that
            $\a_0f+\a_1\partial(f)+\dots+\a_n \partial^n(f)=\tau(g)-g$.
              \item There exist a non-negative integer $n$ and
            $\a_0,\dots,\a_n\in C$ (not all zero)  such that $g\coloneqq  \sum_{i=0}^{n}\a_i\partial^i(w)\in~\!\K$.
            \end{enumerate}
\end{proposition}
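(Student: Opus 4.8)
The plan is to prove the cyclic chain of implications $(3)\Rightarrow(1)$, $(1)\Rightarrow(2)$, $(2)\Rightarrow(3)$, using the difference Galois theory recalled in \S\ref{subsec:Galois} together with the prolongation trick of \eqref{eq:prolongation}. Throughout, the key structural observation is that $w$ satisfies the inhomogeneous equation $\tau(y)=y+f$ with \emph{trivial} homogeneous part ($a=1$), so the relevant homogeneous solution is the constant $z=1$; this makes the associated matrix system, and all its prolongations, \emph{unipotent}, and hence forces the Galois group to be a unipotent (in particular, connected solvable) algebraic subgroup of a group of upper-triangular matrices with $1$'s on the diagonal. That is what will make the whole argument go through.

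First I would dispatch the easy implication $(3)\Rightarrow(1)$: if $g=\sum_{i=0}^n\a_i\partial^i(w)\in\K$ with the $\a_i\in C$ not all zero, then this is literally an algebraic (indeed linear) differential equation for $w$ over $\K$, so $w$ is differentially algebraic. Next, $(2)\Rightarrow(3)$ is a direct computation: from $\a_0f+\dots+\a_n\partial^n(f)=\tau(g)-g$ and $\tau(w)-w=f$ (so that $\tau(\partial^i w)-\partial^i w=\partial^i f$, using $\tau\partial=\partial\tau$), set $G:=\sum_{i=0}^n\a_i\partial^i(w)-g\in F$. Then $\tau(G)-G=\sum_i\a_i\partial^i f-(\tau(g)-g)=0$, so $G\in F^\tau=C\subseteq\K$, and therefore $\sum_i\a_i\partial^i(w)=G+g\in\K$, which is $(3)$ (after absorbing $G$ into $g$, or just noting $G+g\in\K$).

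The heart of the matter is $(1)\Rightarrow(2)$. Assume $w$ is differentially algebraic over $\K$, so there is some $n$ with $w,\partial(w),\dots,\partial^n(w)$ algebraically dependent over $\K$. Work inside the $\partial$-Picard-Vessiot ring $\cR$ (or an $F$ as in Notation~\ref{notation:algebraF}) and consider the prolonged system \eqref{eq:prolongation} for this $n$, whose Picard-Vessiot ring $\cR$ contains by Corollary~\ref{cor:existence of PV-Wibmer}. Here the matrix $A=\begin{pmatrix}1&f\\0&1\end{pmatrix}$, all of whose $\partial$-iterates are strictly upper triangular off the diagonal, so the solution matrix in \eqref{eq:prolongation} is block-upper-triangular with identity diagonal blocks; concretely the new "$w$-type" coordinates are $w,\partial(w),\dots,\partial^n(w)$ together with the constant $1$. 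Let $G_n$ be the Galois group of this prolonged system over $\K$. As in Example~\ref{exa:rank1inhomogeneousBIS}, every $\varphi\in G_n$ fixes the homogeneous solution $1$ and acts on the remaining coordinates by $\varphi(\partial^i w)=\partial^i w+e_i(\varphi)$ with $e_i(\varphi)\in C$; moreover $\varphi\mapsto(e_0(\varphi),\dots,e_n(\varphi))$ is a group homomorphism $G_n\to(C^{n+1},+)=\mathbb{G}_a^{n+1}(C)$, and it is injective because $\cR$ (for this system) is generated over $\K$ by the $\partial^i w$ and the constant $1$. Hence $G_n$ is (isomorphic to) an algebraic subgroup of $\mathbb{G}_a^{n+1}$, i.e.\ a $C$-subspace $V\subseteq C^{n+1}$.

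Now invoke Theorem~\ref{thm:tr.def=dim}: the transcendence degree of the Picard-Vessiot ring of \eqref{eq:prolongation} over $\K$ equals $\dim_C G_n=\dim_C V$. Since $w,\dots,\partial^n(w)$ are algebraically dependent over $\K$ by hypothesis, this transcendence degree is at most $n$, so $\dim_C V\le n<n+1$; therefore $V$ is a \emph{proper} subspace of $C^{n+1}$, so it is cut out by at least one nontrivial linear form: there exist $\a_0,\dots,\a_n\in C$, not all zero, with $\sum_{i=0}^n\a_i e_i(\varphi)=0$ for all $\varphi\in G_n$. Consider $g:=\sum_{i=0}^n\a_i\partial^i(w)\in\cR$. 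For every $\varphi\in G_n$ we get $\varphi(g)=\sum_i\a_i(\partial^i w+e_i(\varphi))=g+\sum_i\a_i e_i(\varphi)=g$, so $g$ is $G_n$-invariant; by the Galois correspondence (Theorem~\ref{thm:Galois-correspondence}), $\L^{G_n}=\K$, hence $g\in\K$. Finally, apply $\tau$: since $\tau(\partial^i w)=\partial^i w+\partial^i f$, we obtain $\tau(g)-g=\sum_{i=0}^n\a_i\partial^i(f)$, which is precisely statement $(2)$ with this $g$ and these $\a_i$. This closes the cycle.

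The main obstacle I anticipate is a subtlety rather than a difficulty: one must be careful that the field of constants $C$ here is only assumed algebraically closed for the plain difference Galois theory, whereas the $\partial$-Picard-Vessiot formalism and Corollary~\ref{cor:existence of PV-Wibmer} are set up (as the paper stresses) under a \emph{weaker} hypothesis on constants. So the clean argument above should be run through the prolongation systems \eqref{eq:prolongation} and the ordinary (non-parameterized) Galois theory of those finite-dimensional systems—exactly as Corollary~\ref{cor:existence of PV-Wibmer} licenses—rather than through the parameterized Galois group directly; the unipotence of $A$ and its $\partial$-iterates is what guarantees that the relevant Galois groups are vector groups $\mathbb{G}_a^{k}$, for which "proper subgroup $\Leftrightarrow$ contained in the kernel of a nonzero linear form" is elementary and requires nothing beyond linear algebra over $C$.
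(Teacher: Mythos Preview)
Your proposal is correct and follows essentially the same approach as the paper: both identify the Galois group of the prolonged system with an algebraic subgroup of the vector group $(C,+)^{n+1}$, invoke Theorem~\ref{thm:tr.def=dim} to equate transcendence degree with dimension, extract a nontrivial linear form from the resulting proper subspace, and then use the Galois correspondence (Theorem~\ref{thm:Galois-correspondence}) to conclude that $g=\sum_i\alpha_i\partial^i(w)\in\K$, from which $(2)$ follows by applying~$\tau$. The only cosmetic differences are that you organize the argument as a cycle $(3)\Rightarrow(1)\Rightarrow(2)\Rightarrow(3)$ and phrase things via the block prolongation matrix~\eqref{eq:prolongation}, whereas the paper runs a single chain of ``iff''s and writes out the equivalent block-diagonal system $\{\tau(y_i)=y_i+\partial^i(f):i=0,\dots,n\}$ directly.
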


\begin{proof}
It follows from Corollary~\ref{cor:existence of PV-Wibmer} that $F$ contains a copy of the Picard-Vessiot ring $R_{f,n}$ of
the system $\l\{\tau(y_i)=y_i+\partial^i(f), i=0,\dots,n\r\}$. 
Notice that 
the latter system
can be written in the form of a linear $\tau$-difference system as follows
(where $\hbox{diagonal}(A_1,A_2,\ldots, A_n)$ indicates a block-diagonal matrix, having the blocks $A_1$, $A_2$, \ldots,~$A_n$ on the diagonal):
\begin{equation}
\tau(\vec{y})=
\hbox{diagonal}\l(\begin{pmatrix}1 & f  \\0 & 1\end{pmatrix},
\begin{pmatrix}1 & \partial(f) \\0 & 1\end{pmatrix},
\dots,
\begin{pmatrix}1 & \partial^n(f) \\0 & 1\end{pmatrix}\r)
\vec{y}.
\end{equation}
For any $i\geq 0$, $\partial^i(w)$ is a solution of $\tau(y_i)=y_i+\partial^i(f)$, so that
$R_{f,n}=\K[w,\partial(w),\dots,\partial^n(w)]\subset F$, as in Example~\ref{exa:rank1inhomogeneous} and Example~\ref{exa:rank1inhomogeneousBIS}.
\par
By definition, the element $w\in F$ is differentially algebraic over $\K$ if and only if there exists
$n\geq 0$ such that $w,\partial(w),\dots,\partial^n(w)$ are algebraically dependent over $\K$, therefore if and only if there
exists $n\geq 0$ such that $\trdeg_\K R_{f,n}\leq n$.
Moreover, for any $\varphi\in G_n\coloneqq  \Aut^\tau(R_{f,n}/\K)$,
we must have $\varphi(\partial^i(w))=\partial^i(w)+d_{\varphi, i}$, for some $d_{\varphi,i}\in C$.
The composition of two automorphisms $\varphi$ and $\psi$
in the Galois group is represented by the sum  $d_{\varphi, i}+d_{\psi, i}$.
This means that we can identify the Galois group to a subgroup of the vector space $(C,+)^{n+1}$.
Theorem \ref{thm:tr.def=dim} implies that $\trdeg_\K R_{f,n}\leq n$
if and only if $\dim_C G_n\leq n$, hence if and only if 
$G_n$ is a proper linear subgroup of $(C,+)^{n+1}$, 
i.e., $G_n$ is contained in a hyperplane of $C^{n+1}$.
Thus, the property $\trdeg_\K R_{f,n}\leq n$ is equivalent to the existence of  $\a_0,\dots,\a_n\in C$ (not all zero) such that
for any $\varphi\in G$, we have $\sum_{i=0}^{n}\a_id_{\varphi,i}=0$.
The last linear relation is equivalent to the fact that $g\coloneqq  \sum_{i=0}^{n}\a_i \partial^i(w)$ is $G$-invariant,
that is $\varphi(\sum_{i=0}^{n}\a_i \partial^i(w))=\sum_{i=0}^{n}\a_i \partial^i(w)$ for any $\varphi\in G$.
Theorem \ref{thm:Galois-correspondence} implies  that the latter condition is equivalent to the fact that $g$ belongs to $\K$
and we obtain:
\begin{equation*}
    \tau(g)-g=\tau\l(\sum_{i=0}^{n}\a_i \partial^i(w)\r)-\sum_{i=0}^{n}\a_i \partial^i(w)=\sum_{i=0}^{n}\a_i \partial^i(f).
\end{equation*}
 The proof is completed.
\end{proof}

The equivalence between the first two assertions of the previous statement is 
an immediate corollary 
of \cite[Prop.~2.1]{Hardouin-Compositio}, applied to the system 
$\l\{\tau(y_i)=y_i+\partial^i(f), i=0,\dots,n\r\}$.
The equivalence with the third assertion 
is contained in the cited proof and explains how, under these assumptions, 
differential algebraicity (i.e., existence of an algebraic differential equation for $w$) 
and D-finiteness (i.e., existence of a \emph{linear} differential equation for $w$) are equivalent. 
It also explains how to recover a 
linear differential equation for the solution~$w$ from the ``telescoper'' $\sum_{i=0}^n a_i \partial^i$ for~$f$ in the second assertion. 
One can find it in this form in \cite[Cor.~6.7]{divizio-hanoi}. 
Let us notice that a similar criterion, but in a much more sophisticated setting, has been proved by Papanikolas in 
\cite{Papanikolas-Inventiones} and applied to the algebraic independence of 
Carlitz logarithms (see Thm.~1.2.6 in \emph{loc.~cit.}).  
Even though they are defined in positive characteristic, 
they present some analogies with the ordinary and exponential generating functions considered in \S\ref{sec:setting}.

\begin{exa}\label{exa:Bernoulli}
We consider the OGF of the family of Bernoulli polynomials,
which satisfies the 
functional equation 
\[\tau (B)=(1+t) \cdot B-\frac{t(1+t)}{(1+t-tx)^2},\]
as proved in Lemma~\ref{lem:Bernoulli}.
Setting $G(x,t)\coloneqq  tB(x,t)$, one obtains the functional equation:
\[
    \tau (G) = G-\left(\frac{t}{1+t-tx}\right)^2 .
\]
We need to prove that $G$ is differentially transcendental over $\C(t)$, for any fixed value of $x\in\C$, to conclude the differential transcendence of $B$. 
We do so by supposing by contradiction that $G$ is differentially algebraic over $\C(t)$.
The argument that follows will appear again in the proof of Theorem~\ref{thm:ADH++} 
(precisely in the proof of Lemma~\ref{lemma:inhomogenouus-ADH++}) 
and is actually used frequently in this kind of problems.
One can apply Proposition~\ref{prop:D-transcendecy-rank1-inhomo}
and show that for any non-negative integer $n$, there do not exist any constants $\a_0,\ldots, \a_n$ (not all zero) and any function $g\in\C(t)$ such that
\begin{equation}
\label{eq:Bernoulli_tel-1}
    \a_0f+\a_1\partial(f)+\dots+\a_n \partial^n(f)=\tau(g)-g,
\end{equation}
with $\partial =  t^2\frac{d}{dt}$ and $f=\left(\frac{t}{1+t-tx}\right)^2$. We easily compute, for all $k\geq 0$,
\begin{equation}
\label{eq:Bernoulli_tel-2}
    \partial^{k}(f) = (k+1)!\left(\frac{t}{1+t-tx}\right)^{k+2}.
\end{equation}
Let us first deal with the case $x\neq 1$. 
By \eqref{eq:Bernoulli_tel-2}, the left-hand side of 
\eqref{eq:Bernoulli_tel-1} has a unique pole, namely, 
at $t_0$ such that $1+t_0-t_0x=0$ (i.e., $t_0=\frac{1}{x-1}$). 
This proves that the right-hand side of \eqref{eq:Bernoulli_tel-1} 
has only one pole at $t_0$, 
and hence either $g$ or $\tau(g)$ has a pole at $t_0$.
To fix ideas, let us assume that $g$ has a pole at $t_0$. Then $\tau^{-1}(t_0)$ is a pole of $\tau(g)$. If $\tau(g)-g$
has a pole at $\tau^{-1}(t_0)$, we have proved that $\tau(g)-g$ has at least two poles, 
a contradiction with~\eqref{eq:Bernoulli_tel-1}. 
If $\tau(g)-g$ has no pole at $\tau^{-1}(t_0)$, then we conclude that $g$ has also a pole at $\tau^{-1}(t_0)$, that cancels the pole of $\tau(g)$
in $\tau(g)-g$. It means that $\tau(g)$ must have a pole at $\tau^{-2}(t_0)$ and we can repeat the argument. 
Since $g$ and $\tau(g)$ are rational functions, they admit a finite number of poles, therefore eventually one shows that 
$\tau(g)-g$ has two poles, contradicting the fact that the left-hand side of \eqref{eq:Bernoulli_tel-1} has only one pole.
\par
In the case $x=1$, the left-hand side of \eqref{eq:Bernoulli_tel-1} becomes a non-zero polynomial with no constant term. 
One first proves that $g\in\C(t)$ must also be a polynomial and finally gets a contradiction, 
showing that it can only be a constant, in contradiction with the fact that the left-hand side is non-zero.
{The key point here is that $\infty$ is the only 
pole of a polynomial and that $\tau$ does not fix $\infty$.}
\end{exa}

Let $a$ be a non-zero element of $\K$. We consider the equation $\tau(y)=ay$ and the Picard-Vessiot ring $R_{a,n}$  over $\K$
of the linear difference system obtained as in \eqref{eq:prolongation} for $A=(a)$.
The following proposition is proved by using the trick of taking logarithmic derivatives, 
as in \cite[Prop.~2.2]{Hardouin-Compositio}.

\begin{proposition}\label{prop:rank1homogeneous-telescopers}
Let $a\in\K$ be non-zero, $F$ be a $\K$-algebra as in Notation \ref{notation:algebraF}
and let $z\in F$ be a non-zero solution of $\tau(y)=ay$.
With the notation introduced above, the following assertions are equivalent:
\begin{enumerate}
  \item The element $z$ is differentially algebraic over $\K$.
  \item There exists a non-negative integer $n$ such that $\trdeg_\K R_{a,n}\leq n$.
  \item There exists a non-negative integer $n$ such that  there exist $\a_0,\dots,\a_n\in C$ (not all zero) and $g\in \K$
satisfying
\begin{equation}\label{eq:differential-telescoper-homogeneous}
\a_0\frac{\partial(a)}{a}+\a_1\partial\l(\frac{\partial(a)}{a}\r)+\dots+\a_n\partial^{n}\l(\frac{\partial(a)}{a}\r)=\tau(g)-g.
\end{equation}
\item  There exists a non-negative integer $n$  and $\alpha_0,\ldots, \alpha_n \in C$ (not all zero) such that $g := \sum_{i=0}^n \alpha_i \partial^i( \partial(z)/z) \in \mathbb{K}$.
\end{enumerate}
\end{proposition}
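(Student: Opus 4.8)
The plan is to mirror the proof of Proposition~\ref{prop:D-transcendecy-rank1-inhomo}, reducing the homogeneous equation $\tau(y)=ay$ to an inhomogeneous one by taking logarithmic derivatives, exactly as in \cite[Prop.~2.2]{Hardouin-Compositio}. First I would invoke Corollary~\ref{cor:existence of PV-Wibmer} to see that $F$ contains a copy of the Picard-Vessiot ring $R_{a,n}$ of the prolongation system \eqref{eq:prolongation} with $A=(a)$. The solution matrix of that prolongation is upper triangular with the single entry $z$ and its derivatives $z, \partial(z),\dots,\partial^n(z)$ appearing, so $R_{a,n}=\K\l[z,z^{-1},\partial(z),\dots,\partial^n(z)\r]$, and $z$ is differentially algebraic over $\K$ if and only if there is some $n$ with $\trdeg_\K R_{a,n}\leq n$. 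This is the equivalence (1)$\Leftrightarrow$(2); the content of (1)$\Rightarrow$(2) is that if $z,\partial(z),\dots,\partial^N(z)$ are algebraically dependent then the same holds for every larger set of iterated derivatives, which bounds the transcendence degree.

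Next I would carry out the logarithmic-derivative trick to get (2)$\Leftrightarrow$(3). Set $u:=\partial(z)/z$. From $\tau(z)=az$ one computes $\tau(u)=u+\partial(a)/a$, so $u$ is a solution of an \emph{inhomogeneous} rank-one $\tau$-equation with inhomogeneity $b:=\partial(a)/a\in\K$, precisely of the type treated in Proposition~\ref{prop:D-transcendecy-rank1-inhomo}. One checks that $\K\l[z,z^{-1},\partial(z),\dots,\partial^n(z)\r]$ and $\K\l[u,\partial(u),\dots,\partial^{n-1}(u)\r]$ have transcendence degrees over $\K$ differing by exactly~$1$ (the extra generator being $z$ itself, which is transcendental over $\K$ as soon as, say, $\partial(a)/a\ne\tau(g)-g$ for all $g\in\K$; and if $z$ is algebraic over $\K$ then $z$ is trivially differentially algebraic and all three assertions hold). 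Hence $\trdeg_\K R_{a,n}\leq n$ holds for some $n$ if and only if $u$ is differentially algebraic over $\K$, which by Proposition~\ref{prop:D-transcendecy-rank1-inhomo} applied to $f=\partial(a)/a$ is equivalent to the existence of $n\geq 0$, constants $\a_0,\dots,\a_n\in C$ not all zero, and $g\in\K$ with
\[
\a_0\frac{\partial(a)}{a}+\a_1\partial\l(\frac{\partial(a)}{a}\r)+\dots+\a_n\partial^{n}\l(\frac{\partial(a)}{a}\r)=\tau(g)-g,
\]
which is exactly \eqref{eq:differential-telescoper-homogeneous}.

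The main obstacle I expect is the careful bookkeeping of transcendence degrees in the shift $R_{a,n}\leftrightarrow R_{b,n-1}$, i.e.\ showing that adjoining $z$ (as opposed to $u=\partial(z)/z$ and its derivatives) adds exactly one to the transcendence degree and never accidentally more or less. This requires separating out the degenerate case where $z$ is algebraic over $\K$ (handled directly: an algebraic element is differentially algebraic, and one must confirm (3) holds then, which follows since $\partial(z)/z\in\K$ forces $\partial(a)/a=\tau(\partial(z)/z)-\partial(z)/z$, giving the telescoping relation with $n=0$, $\a_0=1$), and then, in the generic case where $z$ is transcendental, arguing via the Galois groups — just as in the proof of Proposition~\ref{prop:D-transcendecy-rank1-inhomo} — that the Galois group of $R_{a,n}$ over $\K$ sits inside $\GL_1(C)\times (C,+)^n$ and has dimension $n+1$ minus the number of independent telescoping relations, so the drop in dimension matches the drop in transcendence degree predicted by Theorem~\ref{thm:tr.def=dim}. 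A clean way to avoid re-deriving the Galois-theoretic part is simply to quote Proposition~\ref{prop:D-transcendecy-rank1-inhomo} for $u$ and then add the single transcendental generator $z$ by hand, which keeps the argument short.
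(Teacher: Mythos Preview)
Your approach is essentially the paper's: identify $R_{a,n}$ with $\K[z,z^{-1},\partial(z),\dots,\partial^n(z)]$, get (1)$\Leftrightarrow$(2) from Theorem~\ref{thm:tr.def=dim}, then reduce (1)$\Leftrightarrow$(3) to Proposition~\ref{prop:D-transcendecy-rank1-inhomo} via the logarithmic derivative $u=\partial(z)/z$. The paper's execution is shorter because it bypasses your transcendence-degree bookkeeping and the algebraic/transcendental case split entirely: it simply observes that $z$ is differentially algebraic over $\K$ if and only if $u=\partial(z)/z$ is (a general fact, since each $\partial^k(z)$ lies in $z\cdot\K[u,\partial(u),\dots,\partial^{k-1}(u)]$ and conversely each $\partial^k(u)$ lies in $\K(z,\partial(z),\dots,\partial^{k+1}(z))$), and then applies Proposition~\ref{prop:D-transcendecy-rank1-inhomo} directly to $u$ with $f=\partial(a)/a$. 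This is exactly the ``clean way'' you mention in your last sentence; the paper just commits to it from the start. Your case split also contains a small unjustified step: when $z$ is algebraic over $\K$ you assert $\partial(z)/z\in\K$, which is true but needs an argument (e.g., the Galois group of $\tau(y)=ay$ is then finite cyclic, so $z^m\in\K$ for some $m$, whence $\partial(z)/z=\tfrac{1}{m}\,\partial(z^m)/z^m\in\K$). Since the case split is unnecessary, this gap is harmless.
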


\begin{proof}
We have $R_{a,n}=\K[z,\partial(z),\dots,\partial^{n}(z), z^{-1}]\subset F$, up to an automorphism commuting with~$\tau$.
The first equivalence follows immediately from the definition of differential algebraicity and
Theorem \ref{thm:tr.def=dim}.
We notice that $z$ is differentially algebraic over $\K$ if and only if $w\coloneqq  \frac{\partial(z)}{z}\in F$
is differentially algebraic over $\K$.
Since we have 
$\tau(w)
=
\frac{\tau(\partial(z))}{\tau(z)}
=
\frac{\partial(\tau(z))}{\tau(z)}
=
\frac{\partial(az)}{az}
=
w+\frac{\partial(a)}{a}$, 
we conclude by applying Proposition~\ref{prop:D-transcendecy-rank1-inhomo}.
\end{proof}

\begin{exa}\label{exa:Bell-homogeneous}
Let us consider the setting of Examples \ref{exa:Klazar} and \ref{exa:Klazar-derivative}, with
$\tau(f(t))=f\l(\frac{t}{1+t}\r)$ and 
$\partial=t^2\frac{d}{dt}$.
The function $z(t)\coloneqq  \Ga\l(\frac{1}{t}\r)^{-1}$, 
which lives in the algebra
of analytic functions over $\C^*\cup\{\infty\}$,
is a solution of the equation $\tau(y)=ty$, that is 
the homogeneous equation associated to Klazar's example~\eqref{eq:Bell-intro}. 
Since $\frac{\partial(t)}{t} = t$
and
$\partial^n(t) = n! t^{n+1}$,
one deduces that 
for any $\alpha_0, \ldots, \a_n\in \C$ (not all zero),
the linear combination
\begin{equation*}
\a_0t+\a_1\partial\l(t\r)+\dots+\a_n\partial^{n}\l(t\r)
=
\a_0t+ \a_1 t^2+\cdots+\a_n \, n! t^{n+1}
\end{equation*}
cannot be written as $g\l(\frac{t}{1+t}\r) - g(t)$, for any $g\in \C(t)$. This is easily  proved by reasoning on the poles of the potential rational functions~$g$,
as in Example~\ref{exa:Bernoulli}.
We deduce from Proposition~\ref{prop:rank1homogeneous-telescopers} that 
$z(t)\coloneqq  \Ga\l(\frac{1}{t}\r)^{-1}$ is D-transcendental over $\K\coloneqq  \C(t)$, 
which reproves Hölder's {theorem~\cite{Holder1887,Bank-Kaufman-1978-Holder,HardouinSinger}}
on the D-transcendence of the gamma function.
We also deduce from Proposition~\ref{prop:rank1homogeneous-telescopers} that any non-zero solution of
$\tau(y)=ty$ in any algebra $F$ as above is differentially transcendental\footnote{After we submitted to the arXiv a \href{https://arxiv.org/abs/2012.15292v1}{first version} of the present paper, Michael F.\ Singer has brought to our attention the fact that in 2010, in a private letter to Charlotte Hardouin, he had given a Galoisian proof of Klazar's theorem on the OGF of Bell numbers, based on some calculations
very similar to this example and 
\cite[Prop.~3.9]{HardouinSinger}.} over $\C(t)$.
\end{exa}

\paragraph*{Differential transcendence: the equation $\tau(y)=ay+f$}

We want to prove a generalization of Corollary \ref{cor:rank1inhomogeneous} to differential
transcendence (see Theorem~\ref{thm:mainTHM} below).
Let $f$ and $a$ be non-zero elements of $\K$, and
let us consider the difference equation
$\tau(y)=ay+f$.
It can be transformed in a linear system as in the examples above:
$\tau(\vec{y})=A\vec{y}$, with $A\coloneqq  \begin{pmatrix}a & f \\0 & 1\end{pmatrix}$.
The following statement generalizes 
\cite[Item 1 of Prop.~3.8]{HardouinSinger} to the case of an algebraically closed field of constants.
We remind that in \cite{HardouinSinger} the authors assume that the field of constants is 
differentially closed (although it is not difficult to prove a descent 
to any algebraically closed field, for the readers familiar with this kind of reasoning).
The advantage of the statement below, compared to \cite{HardouinSinger}, 
is that one can look for the solutions of the inhomogeneous equation and 
the associated homogeneous equation in two different algebras.
As we have already observed in \S\ref{subsec:difficulties}, this is particularly useful 
in our setting.
We will illustrate further the situation in Remark~\ref{rmk:Gamma} below. 

\begin{theorem}\label{thm:mainTHM}
Let us consider an equation of the form $\tau(y)=ay+f$, with $a,f\in \K$, such that $a\neq 0,1$ and $f\neq 0$.
Let $F/\K$ be a field extension, 
{with an extension of $\tau$ and $\partial$ to $F$ preserving the
commutativity},
such that there exists $w\in F\setminus \K$
satisfying the equation $\tau(w)=aw+f$.
Moreover, let $F_a$ be a $\K$-algebra as in Notation \ref{notation:algebraF},
such that there exists $z\in F_a$ satisfying the equation $\tau(z)=az$.
If $z$ is differentially transcendental over $\K$, then $w$ is differentially transcendental over $\K$.
\end{theorem}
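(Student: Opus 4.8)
The plan is to reduce the non-unipotent inhomogeneous equation $\tau(y)=ay+f$ to the unipotent situation of Proposition~\ref{prop:D-transcendecy-rank1-inhomo} via the classical variation-of-constants substitution $w=zu$, and then to \emph{descend} the resulting telescoping obstruction from the $\tau\partial$-field generated by $z$ back to $\K$. I argue by contradiction, assuming $w$ differentially algebraic over $\K$. First I would enlarge $F$ so that it contains an invertible solution $z$ of $\tau(y)=az$: take a $\partial$-Picard--Vessiot ring of $\tau(y)=ay$ over $F$ (Proposition~\ref{prop:existence of PV-Wibmer}) and replace $F$ by its total ring of fractions. By Proposition~\ref{prop:rank1homogeneous-telescopers} the differential transcendence of a solution of $\tau(y)=ay$ over $\K$ is an intrinsic property of the pair $(\K,a)$, so this new $z$ is again differentially transcendental over $\K$; in particular $\partial(a)/a\neq 0$ (otherwise the telescoping criterion of Proposition~\ref{prop:rank1homogeneous-telescopers} holds trivially), hence $a\notin C$ and $\tau(y)=ay$ has no solution in $\K^{\times}$ (such a $q$ would give $z/q\in C$, i.e.\ $z\in\K$). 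Set $\mathbb{L}:=\K\langle z\rangle\subseteq F$: since $z$ is differentially transcendental, $\mathbb{L}$ is purely transcendental over $\K$, it is stable under $\tau$ and $\partial$, and one checks $\mathbb{L}^{\tau}=C$. Writing $u:=w/z$, one has $\tau(u)=u+\phi$ with $\phi:=f/(az)\in\mathbb{L}^{\times}$.

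A transcendence-degree count then shows that $u$ is differentially algebraic over $\mathbb{L}$: indeed $\mathbb{L}\langle u\rangle=\mathbb{L}\langle w\rangle$ has transcendence degree over $\mathbb{L}$ at most $\trdeg_{\K}\K\langle w\rangle<\infty$. So Proposition~\ref{prop:D-transcendecy-rank1-inhomo}, applied over $\mathbb{L}$, produces an integer $N\geq 0$, constants $\alpha_0,\dots,\alpha_N\in C$ not all zero (say $\alpha_N\neq 0$), and $g\in\mathbb{L}$ with $\sum_{i=0}^{N}\alpha_i\partial^i(\phi)=\tau(g)-g$. Setting $\ell_j:=\partial^j(\partial(z)/z)$ — so that $\tau(\ell_j)=\ell_j+\partial^j(\partial(a)/a)$, and $\mathbb{L}=\mathbb{M}(z)$ with $\mathbb{M}:=\K(\ell_0,\ell_1,\dots)$ and $z$ transcendental over $\mathbb{M}$ — an explicit computation of $\partial^i(f/(az))$ gives $\sum_i\alpha_i\partial^i(\phi)=z^{-1}\Psi$ with $\Psi\in\K[\ell_0,\dots,\ell_{N-1}]$ nonzero, of degree exactly $1$ in $\ell_{N-1}$ with coefficient a nonzero element of $\K$.

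It remains to descend the identity $z^{-1}\Psi=\tau(g)-g$ to $\K$. Analyzing the poles of $g$ in the variable $z$ over $\mathbb{M}$ (using that $a$, being non-constant, is not a root of unity, so $z\mapsto az$ acts non-periodically) one sees that $g$ may be taken of the form $\lambda z^{-1}$ with $\tau(\lambda)=a\lambda+a\Psi$; comparing top-degree homogeneous parts in the $\ell_j$ and using again that $\tau(y)=ay$ has no solution in $\K^{\times}$ forces $\lambda\in\K[\ell_0,\dots,\ell_M]$. When $N\geq 1$, applying the formal partial derivative $\partial/\partial\ell_{N-1}$ — which commutes with $\tau$ on $\mathbb{M}$ — yields $\tau(H)-aH=ac$ with $H:=\partial\lambda/\partial\ell_{N-1}$ and $c\in\K^{\times}$, and the same top-degree argument gives $H\in\K$. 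Either way one obtains $h\in\K$ with $\tau(h)=ah+\gamma f$ for some $\gamma\in C^{\times}$; then $w-h/\gamma$ solves $\tau(y)=ay$, so equals $c'z$ with $c'\in C$, and $c'\neq 0$ because $w\notin\K$. Hence $\K\langle z\rangle=\K\langle w\rangle$ and $z$ is differentially algebraic over $\K$, contradicting the hypothesis; this contradiction will prove the theorem.

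The main obstacle is precisely this last descent: pinning down the denominator of $g$ (a pole analysis in $z$ in the spirit of Abramov-type reductions) and then eliminating the auxiliary variables $\ell_j$. A secondary technical point is checking that $\mathbb{L}=\K\langle z\rangle$ meets the hypotheses needed to invoke Proposition~\ref{prop:D-transcendecy-rank1-inhomo}, in particular $\mathbb{L}^{\tau}=C$.
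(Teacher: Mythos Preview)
Your approach is genuinely different from the paper's. Instead of performing a variation-of-constants reduction and then a delicate descent from $\mathbb{L}=\K\langle z\rangle$ back to $\K$, the paper works directly inside the (ordinary, non-parameterized) Picard--Vessiot ring $R_n$ of the prolongation system~\eqref{eq:prolongation} for $A=\begin{pmatrix}a&f\\0&1\end{pmatrix}$, and proves the contrapositive. The key observation is Galois-theoretic: since $w\notin\K$, one can choose $\varphi\in\Aut^\tau(R_n/\K)$ with $\varphi(w)\neq w$, and then $\tilde z:=w-\varphi(w)$ is a \emph{nonzero} solution of the homogeneous equation $\tau(y)=ay$ that is automatically differentially algebraic over $\K$ (both $w$ and $\varphi(w)$ satisfy algebraic differential equations over $\K$ of the same order $\kappa$, so their difference satisfies one of order at most $2(\kappa+1)$, by a result of Boshernitzan--Rubel). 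Proposition~\ref{prop:rank1homogeneous-telescopers}, applied to $\tilde z\in R_n$, then produces the telescoping relation for $\partial(a)/a$ \emph{directly over $\K$}, and a second application of the same proposition transfers differential algebraicity to the given solution $z\in F_a$. No base change to $\mathbb{L}$ and no descent are required.

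Your route can plausibly be completed, but the parts you flag as obstacles are genuine work rather than formalities. Verifying $\mathbb{L}^\tau=C$ amounts to showing that the $\partial$-Picard--Vessiot extension for $\tau(y)=ay$ introduces no new $\tau$-constants; this is available but not immediate from the statements you invoke. The descent step---first reducing $g\in\mathbb{M}(z)$ to $\lambda z^{-1}$ by a pole analysis in $z$, then forcing $\lambda$ to be polynomial in the $\ell_j$'s, and finally extracting an element of $\K$ by differentiating with respect to $\ell_{N-1}$---requires several careful arguments. For instance, $\Psi$ has \emph{total} degree $N$ in the $\ell_j$'s (with top part $(-1)^N\alpha_N(f/a)\ell_0^N$), not total degree $1$, so your ``top-degree homogeneous part'' comparison must track that term rather than the $\ell_{N-1}$-linear one; and the pole argument in $z$ needs the non-periodicity of $z\mapsto az$ on $\overline{\mathbb{M}}$, which in turn uses that the $\tau$-orbit of any putative pole produces infinitely many distinct poles. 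What your approach would buy, if carried out, is a more explicit, constructive passage from the differential relation on $w$ to the one on $z$; what the paper's approach buys is brevity and the complete avoidance of base change: the Galois group performs the elimination for you, since $w-\varphi(w)$ already lives over $\K$ and already solves the homogeneous equation.
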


Proposition \ref{prop:rank1homogeneous-telescopers} and Theorem \ref{thm:mainTHM}
imply directly the following corollary:

\begin{corollary}\label{cor:mainTHM}
In the notation of the theorem above,
if for any $n\geq 0$, any $\a_0,\dots,\a_n\in C$ (not all zero) and any $g\in \K$,
we have
\begin{equation}
\a_0\frac{\partial(a)}{a}+\a_1\partial\l(\frac{\partial(a)}{a}\r)+\dots+\a_n\partial^{n}\l(\frac{\partial(a)}{a}\r)\neq \tau(g)-g,
\end{equation}
then $w$ is differentially transcendental over $\K$.
\end{corollary}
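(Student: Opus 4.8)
The plan is to deduce the statement directly from Proposition~\ref{prop:rank1homogeneous-telescopers} and Theorem~\ref{thm:mainTHM}, as the surrounding text announces; no new machinery is needed, and the only thing to check is that the two results get applied to the same data.

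First I would recognize the displayed hypothesis of the corollary as being word for word the negation of assertion~(3) of Proposition~\ref{prop:rank1homogeneous-telescopers}: it says that for no integer $n\geq 0$, no $\a_0,\dots,\a_n\in C$ not all zero and no $g\in\K$ can one have $\a_0\frac{\partial(a)}{a}+\a_1\partial\!\left(\frac{\partial(a)}{a}\right)+\dots+\a_n\partial^{n}\!\left(\frac{\partial(a)}{a}\right)=\tau(g)-g$. Proposition~\ref{prop:rank1homogeneous-telescopers} applies here to the non-zero element $a\in\K$ (we have $a\neq 0$, in fact $a\neq 0,1$, from the hypotheses of Theorem~\ref{thm:mainTHM} that we are importing), to the $\K$-algebra $F_a$ as in Notation~\ref{notation:algebraF} that is part of those hypotheses, and to the nonzero solution $z\in F_a$ of $\tau(z)=az$ supplied there. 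Since items~(1), (2) and~(3) of that proposition are equivalent, the failure of~(3) is equivalent to the failure of~(1); in other words, $z$ is differentially transcendental over $\K$.

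Then I would feed this into Theorem~\ref{thm:mainTHM}, invoked with the very same $a$, $f$, $F$, $w$, $F_a$ and $z$: its hypotheses $a\neq 0,1$, $f\neq 0$, $w\in F\setminus\K$ with $\tau(w)=aw+f$, and $z\in F_a$ with $\tau(z)=az$ are exactly ``the notation of the theorem above'', and we have just verified the remaining hypothesis that $z$ is differentially transcendental over $\K$. The conclusion of Theorem~\ref{thm:mainTHM} is then precisely that $w$ is differentially transcendental over $\K$, which finishes the proof. At this level there is no real obstacle; the substantive content --- transferring differential transcendence from a solution $z$ of the homogeneous equation $\tau(y)=ay$ in $F_a$ to a solution $w$ of $\tau(y)=ay+f$ in the a priori unrelated field extension $F$ --- has been isolated in, and is the hard part of, Theorem~\ref{thm:mainTHM} itself.
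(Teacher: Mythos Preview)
Your proof is correct and is exactly the approach the paper intends: the corollary is stated without proof in the paper, preceded only by the sentence that Proposition~\ref{prop:rank1homogeneous-telescopers} and Theorem~\ref{thm:mainTHM} ``imply directly'' it, and your argument spells out precisely that implication.
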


\begin{remark}\label{rmk:Gamma}
Corollary~\ref{cor:mainTHM} allows us to conclude immediately that the generating function of the Bell numbers is differentially transcendental over $\C(t)$,
as a consequence of Hölder's theorem. See Example \ref{exa:Bell-homogeneous}.
If we want to apply Theorem \ref{thm:mainTHM}, it is enough to take $F=\C((t))$ and $F_a$  the field of meromorphic functions at $\infty$.
Notice that there is no common natural extension of the fields $F$ and $F_a$. 
\end{remark}

We will rather prove the following statement, which is obviously equivalent to Theorem~\ref{thm:mainTHM}:

\begin{proposition}\label{prop:D-transcendence-Klazar}
In the notation of Theorem \ref{thm:mainTHM}, if $w\in F\setminus \K$ is differentially algebraic over~$\K$, then
$z\in F_a$ is differentially algebraic over $\K$.
\end{proposition}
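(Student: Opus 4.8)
The plan is to reduce the differential algebraicity of $w$ (solution of the inhomogeneous equation $\tau(w)=aw+f$) to a statement purely about $a$ and $f$, then to show that this forces differential algebraicity of $z$ (solution of $\tau(z)=az$), by exploiting Proposition~\ref{prop:D-transcendecy-rank1-inhomo} and Proposition~\ref{prop:rank1homogeneous-telescopers}. The first step is to pass from $w$ to a solution of a \emph{pure} inhomogeneous equation of the shape $\tau(u)=u+(\text{something})$, so that Proposition~\ref{prop:D-transcendecy-rank1-inhomo} applies directly. The natural move is to divide $w$ by $z$: if we had a copy of $z$ available in the same algebra as $w$, then $u:=w/z$ would satisfy $\tau(u)=u+f/(az)=u+(\tau(z))^{-1}f$. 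But the whole point of the statement is that $z$ need not live in $F$; so instead I would work inside the $\partial$-Picard-Vessiot machinery. By Proposition~\ref{prop:existence of PV-Wibmer} applied to the system $\tau(\vec y)=A\vec y$ with $A=\begin{pmatrix}a&f\\0&1\end{pmatrix}$, there is a $\partial$-Picard-Vessiot ring $\cR$ containing a fundamental matrix $\begin{pmatrix}\zeta&w\\0&1\end{pmatrix}$ with $\tau(\zeta)=a\zeta$, and (using Notation~\ref{notation:algebraF}) $\cR$ embeds in both the relevant total rings of fractions; crucially $\cR^\tau=C$. Inside $\cR$ the element $\zeta$ is a nonzero solution of the homogeneous equation, so it suffices to prove the statement with $z$ replaced by $\zeta$, since differential algebraicity of \emph{one} nonzero solution of $\tau(y)=ay$ over $\K$ is equivalent to that of any other (two such solutions differ by a multiplicative $\tau$-constant, i.e.\ by an element of $C$, and $C$ is a field of constants for $\partial$ as well by the commutation hypothesis — or one simply invokes Proposition~\ref{prop:rank1homogeneous-telescopers}, whose condition (3) only involves $a$).

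The second step is the core reduction. Set $u:=w/\zeta\in\cR[\zeta^{-1}]$; then $\tau(u)=u+g_0$ where $g_0:=f/(a\zeta)=\tau(\zeta)^{-1}f$. Now suppose $w$ is differentially algebraic over $\K$. Then $u=w/\zeta$ is differentially algebraic over the field $\K(\zeta,\partial\zeta,\dots)$, but that is not quite what Proposition~\ref{prop:D-transcendecy-rank1-inhomo} wants — we need differential algebraicity over $\K$ itself. Here is where the assumption that $z$ (equivalently $\zeta$) is differentially \emph{transcendental} over $\K$, which we are trying to contradict, is leveraged: assume for contradiction that $\zeta$ is differentially transcendental over $\K$, while $w$ is differentially algebraic over $\K$. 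Then $w,\partial w,\dots,\partial^m w$ satisfy a polynomial relation over $\K$ for some $m$, hence $\trdeg_\K\K(w,\partial w,\dots,\partial^m w)\le m$. On the other hand $\zeta$ differentially transcendental means $\zeta,\partial\zeta,\dots,\partial^n\zeta$ are algebraically independent over $\K$ for every $n$. I would then argue at the level of the $\partial$-Picard-Vessiot ring $\cR$ and its Galois group: by Corollary~\ref{cor:existence of PV-Wibmer}, $\cR$ contains the Picard-Vessiot ring of the prolonged system \eqref{eq:prolongation}, whose matrix solution has $\zeta,\partial\zeta,\dots,\partial^n\zeta$ (and $w,\partial w,\dots,\partial^n w$) among its entries. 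The Galois group element $\varphi$ acts by $\varphi(\partial^i\zeta)=c_{\varphi,i}\,\text{(combinations of }\partial^j\zeta)$ — more precisely, from the shape of the prolonged matrix, $\varphi(\partial^i\zeta)$ is a fixed $C$-linear combination of $\zeta,\partial\zeta,\dots,\partial^i\zeta$ — and $\varphi(\partial^i w)=\partial^i w+(\text{$C$-linear combination of }\zeta,\dots,\partial^i\zeta)$. Differential transcendence of $\zeta$ forces the Galois group of the homogeneous prolonged system to be as large as possible (full torus times full unipotent part on the $\zeta$-block, by Theorem~\ref{thm:tr.def=dim}); I would use this largeness to manufacture, for each $n$, an automorphism $\varphi$ with $\varphi(w)-w$ equal to a prescribed nonzero multiple of $\zeta$, contradicting differential algebraicity of $w$ exactly as in the proof of Corollary~\ref{cor:rank1inhomogeneous} but now in the $\partial$-differential setting applied to all the $\partial^i w$ simultaneously.

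The cleaner route, which I would actually write up, packages the above through the two propositions already proved. Assume $w$ differentially algebraic over $\K$. By the criterion in Proposition~\ref{prop:D-transcendecy-rank1-inhomo}, applied to $u=w/\zeta$ satisfying $\tau(u)=u+\tau(\zeta)^{-1}f$, there exist $n\ge 0$, constants $\a_0,\dots,\a_n\in C$ not all zero, and $h$ in the appropriate field such that $\sum_i\a_i\partial^i\big(\tau(\zeta)^{-1}f\big)=\tau(h)-h$; expanding $\partial^i(\tau(\zeta)^{-1}f)$ by Leibniz produces terms involving $\tau(\zeta)^{-1}$ and $\tau(\partial^j\zeta/\zeta)$ times elements of $\K$. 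The key observation is that $\partial^j\zeta/\zeta$ can be written in terms of iterated $\partial$ of $w_0:=\partial\zeta/\zeta$, and $\tau(w_0)=w_0+\partial(a)/a$. So, after clearing $\tau(\zeta)^{-1}$ (which itself satisfies $\tau(\tau(\zeta)^{-1})=a^{-1}\tau(\zeta)^{-1}$, a \emph{homogeneous} rank-one equation), the telescoping relation for $u$ translates into a telescoping relation of exactly the type appearing in Proposition~\ref{prop:rank1homogeneous-telescopers}(3) for the homogeneous equation $\tau(y)=ay$: namely a nontrivial $C$-linear relation $\sum_i\b_i\partial^i(\partial(a)/a)=\tau(g)-g$ with $g\in\K$. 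By Proposition~\ref{prop:rank1homogeneous-telescopers} this is equivalent to $z$ being differentially algebraic over $\K$, which is the desired conclusion. The main obstacle, and the step I expect to require real care, is precisely this bookkeeping translation: handling the $\tau(\zeta)^{-1}$ factor and the Leibniz expansion so that the resulting relation lands \emph{in $\K$} (not merely in some larger differential field) with genuinely nonzero constant coefficients — in other words, making sure that the nontriviality of $(\a_i)$ survives the manipulation and is not killed by the change from $u$ to $w_0$. This is where one must use that $\cR^\tau=C$ and that $C$ is algebraically closed (so Theorem~\ref{thm:tr.def=dim} and the Galois correspondence apply cleanly), and it is the analogue, in the differential world, of the one-line argument in Corollary~\ref{cor:rank1inhomogeneous}.
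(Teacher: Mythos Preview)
Your ``cleaner route'' has a genuine gap. Proposition~\ref{prop:D-transcendecy-rank1-inhomo} requires the inhomogeneous term to lie in $\K$, but your $g_0=\tau(\zeta)^{-1}f$ does not: it lives in a differential extension of $\K$ generated by $\zeta$. You acknowledge this as ``the main obstacle'' but do not resolve it. Applying the proposition over an enlarged base field $\K_1=\K(\zeta,\partial\zeta,\dots)$ would yield a telescoping relation $\sum_i\alpha_i\partial^i(g_0)=\tau(h)-h$ with $h\in\K_1$, whereas Proposition~\ref{prop:rank1homogeneous-telescopers}(3) demands a relation $\sum_i\beta_i\partial^i(\partial a/a)=\tau(g)-g$ with $g\in\K$. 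There is no mechanism for ``clearing $\tau(\zeta)^{-1}$'' that brings the relation down to~$\K$ while preserving nontriviality of the constants; the Leibniz expansion only tangles things further with powers of $\zeta^{-1}$ and derivatives of $\zeta$.

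Your middle paragraph is actually much closer to what works, and is essentially the paper's argument, but you have the logic twisted and you overcomplicate the Galois step. The paper does not assume $\zeta$ differentially transcendental and then seek a contradiction via ``largeness'' of the homogeneous Galois group. Instead it argues directly: since $w\notin\K$, the Galois correspondence furnishes some $\varphi\in\Aut^\tau(R_n/\K)$ with $\varphi(w)\neq w$; then $\widetilde z:=w-\varphi(w)$ is a nonzero solution of $\tau(y)=ay$. The crucial step you are missing is the closure of D-algebraic functions under field operations: if $w$ satisfies an algebraic differential equation of order~$\kappa$ over $\K$, then so does $\varphi(w)$ (because $\varphi$ is a $\K$-automorphism), and hence their difference $\widetilde z$ satisfies one of order at most $2(\kappa+1)$ --- the paper cites \cite{BoRu86} for this. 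Now apply Proposition~\ref{prop:rank1homogeneous-telescopers} to $\widetilde z$: since condition~(3) there involves only $a$ and not the particular solution, the resulting relation $\sum_i\alpha_i\partial^i(\partial a/a)=\tau(g)-g$ with $g\in\K$ transfers verbatim to conclude that $z\in F_a$ is differentially algebraic over~$\K$. No division by $\zeta$, no bookkeeping, no need to work over an enlarged base.
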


\begin{proof}
Let $R_n$ be the Picard-Vessiot ring over $\K$ of the system obtained from $\tau(\vec{y})=
    \begin{pmatrix}a & f \\0 & 1\end{pmatrix}
    \vec{y}$ by $n$-fold derivation
    as in \eqref{eq:prolongation},
and $\cR$ be its $\partial$-Picard-Vessiot ring over $F$.
It follows from Proposition~\ref{prop:existence of PV-Wibmer} and Proposition~\ref{prop:totalPVring}
that we can suppose without loss of generality that $R_n\subset \cR$, because $\K\subset F$, by assumption.
Moreover, 
\begin{equation*}
   R_n=\K[\tilde z,\partial(\tilde z),\dots,\partial^n(\tilde z),w,\partial(w),\dots,\partial^n(w),\tilde{z}^{-1}],
\end{equation*}
for some $\tilde z$ satisfying $\tau(\tilde z)=a\tilde z$.

Let us assume that $w$ is differentially algebraic over $\K$.
This means that $w$ satisfies an algebraic differential equation 
of order $\kappa$ with coefficients in $\K$.
Let $\varphi\in\Aut^\tau(R_\kappa/\K)$ and let us consider $\varphi(w)\in R_\kappa$
such that $\tau(\varphi(w))=a\varphi(w)+f$. The derivatives of $\varphi(w)$ may not belong to 
$R_\kappa$, since $\varphi$ commutes only with $\tau$ and not necessarily with $\partial$, 
however the $\K$-algebra 
\begin{equation*}
   \widetilde R_\kappa=\K[\tilde z,\partial(\tilde z),\dots,\partial^\kappa(\tilde z),\varphi(w),\partial(\varphi(w)),\dots,\partial^\kappa(\varphi(w)),z^{-1}]\subset\cR
\end{equation*}
is another Picard-Vessiot ring of the difference system obtained by $\kappa$-fold iteration. 
Hence $R_\kappa$ and $\widetilde R_\kappa$ are isomorphic as {difference $\K$-algebras
(meaning that the isomorphism of $\K$-algebras commutes with $\tau$)}
and, in particular, they have the same transcendence degree over $\K$. 
We deduce that $\varphi(w)\in\cR$ is solution of an algebraic differential equation of order $\kappa$,
with coefficients in $\K$. Finally, $w-\varphi(w)$ satisfies an algebraic differential equation of order at most $2(\kappa+1)$ (see for instance~\cite[Thm.~2.2]{BoRu86}).
Since $\bar z\coloneqq  w-\varphi(w)\in\cR$ is a solution of $\tau(y)=ay$, we deduce by Proposition~\ref{prop:rank1homogeneous-telescopers},
applied to $\bar z\in\cR$, that there exist
$\a_0,\dots,\a_n\in C$ (not all zero) and $g\in\K$  such that $\sum_{i=0}^{n}\a_i\partial^i\l(\frac{\partial(a)}{a}\r)=\tau(g)-g$.
We conclude, by applying again Proposition~\ref{prop:rank1homogeneous-telescopers} to $z\in F_a$,   
that $z$ is itself differentially algebraic over $\K$.
\end{proof}

\begin{remark}
{The previous proof would be easier if we were able to find an automorphism 
$\varphi$ which commutes with both $\tau$ and $\partial$. 
This is the starting point of the paper \cite{HardouinSinger}, where a more sophisticated 
Galois theory to deal with differential transcendence is developed. 
The idea is to consider as Galois group the group of automorphisms of the 
$\partial$-Picard-Vessiot ring introduced in Proposition~\ref{prop:existence of PV-Wibmer}, 
which commutes with $\tau$ \emph{and} $\partial$. 
Unfortunately, under the assumption that $C$ is algebraically closed, such a group can be reduced to the identity.
To avoid this problem one has either to enlarge drastically $C$, as in \cite{HardouinSinger}, 
or to adopt a group scheme point of view, as for instance in \cite{diviziohardouinPacific}.
Here, we have decided to avoid 
more abstract theories, since we only deal with order $1$, which can be handled more easily.}
\end{remark}

\section{Main result: Strong differential transcendence 
for first-order difference equations}
\label{sec:main}

We consider a field $\K_0\coloneqq  C(t)$, where $C$ is an algebraically closed field of characteristic zero, equipped with an
Archimedean norm $|\cdot|$.
Typically, we will choose $C$ to be the algebraic closure $\ol\Q$ of $\Q$ or $\C$, with the usual norm.
\par
We will look for solutions in $\F\coloneqq  C((t))$ and we will establish their differential transcendence over the field $\K\coloneqq  C(\{t\})$,
where the convergence is considered with respect to $|\cdot|$.
We {recall}
(\cref{exa:Klazar-derivative}) that the derivation $\partial\coloneqq  t^2\frac{d}{dt}$ commutes with $\tau:f(t)\mapsto f(t/(1+t))$ and that establishing the
differential transcendence with respect to $\partial$ is equivalent to establishing it with respect to $\frac{d}{dt}$.
\par
The next theorem generalizes \cite[Thm.~2]{Nishioka_note_1984}, where the author proves a similar statement 
for the differential transcendence over $\K_0$. 
It should be the
first step of a generalization of \cite[Thm.~1.2]{Adamczewski-Dreyfus-Hardouin} 
to higher-order difference equations (in the case called $S_\infty$ in \emph{loc.~cit}).

\begin{theorem}\label{thm:ADH++}
Let $a,f\in\K_0$, with 
$a\neq 0$, 
and let $w\in\F\setminus\K_0$ satisfy the difference equation $\tau(w)=aw+f$.
Then $w$ is differentially transcendental over $\K$.
\end{theorem}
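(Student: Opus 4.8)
The plan is to argue by contradiction: assume that $w$ is differentially algebraic over $\K=C(\{t\})$ and show that this forces $w\in C(t)$, contradicting the hypothesis $w\notin\K_0$. The whole argument rests on the material of \S\ref{subsec:DiffTransc} (Propositions~\ref{prop:D-transcendecy-rank1-inhomo} and~\ref{prop:rank1homogeneous-telescopers}, Theorem~\ref{thm:mainTHM}) together with one genuinely new, purely analytic, ingredient, which I will call the \emph{rationality lemma}: if $v\in C(\{t\})$ satisfies $\tau(v)=\alpha v+\beta$ with $\alpha\in C(t)\setminus\{0\}$ and $\beta\in C(t)$, then $v\in C(t)$. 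To prove it I would pass to the coordinate $s=1/t$, in which $\tau$ becomes the unit translation $s\mapsto s+1$ and $v$ becomes a function $\tilde v$ meromorphic near $s=\infty$ with $\tilde v(s+1)=\tilde\alpha(s)\tilde v(s)+\tilde\beta(s)$, $\tilde\alpha,\tilde\beta\in C(s)$. Since every point of $\mathbb{C}$ is carried into a neighbourhood of $\infty$ by some integer translation, the functional equation propagates $\tilde v$ to a single-valued meromorphic function on all of $\mathbb{C}$, whose poles lie on the finitely many $\mathbb{Z}$-orbits of the zeros of $\tilde\alpha$ and the poles of $\tilde\alpha,\tilde\beta$; being moreover meromorphic at $s=\infty$, $\tilde v$ is meromorphic on $\mathbb{P}^{1}$, hence rational. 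This is exactly where the field $C(\{t\})$, as opposed to an arbitrary $\partial$-extension of $C(t)$, enters. Applied with $\alpha=a,\ \beta=f$ it already yields $w\notin\K$.

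\textbf{Extracting a summability relation.} If $a=1$, I would apply Proposition~\ref{prop:D-transcendecy-rank1-inhomo} with the $\K$-algebra $F=C((t))$ (a field with $F^\tau=C$, hence as in Notation~\ref{notation:algebraF}): differential algebraicity of $w$ gives $n\geq0$, constants $\alpha_0,\dots,\alpha_n\in C$ not all zero, and $g:=\sum_i\alpha_i\partial^i(w)\in\K$, which satisfies $\tau(g)-g=\sum_i\alpha_i\partial^i(f)\in C(t)$; by the rationality lemma $g\in C(t)$. If $a\neq1$, then since $w\in C((t))\setminus\K$, Theorem~\ref{thm:mainTHM} (equivalently Proposition~\ref{prop:D-transcendence-Klazar}), taking for the homogeneous equation $\tau(z)=az$ the total ring of fractions of its $\partial$-Picard--Vessiot ring over $\K$ as auxiliary algebra, shows that this solution $z$ is D-algebraic over $\K$; Proposition~\ref{prop:rank1homogeneous-telescopers} then produces $g=\sum_i\alpha_i\partial^i(\partial(z)/z)\in\K$ with $\tau(g)-g=\sum_i\alpha_i\partial^i(\partial(a)/a)\in C(t)$, and again the rationality lemma forces $g\in C(t)$. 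In both cases the outcome is the same: a \emph{nonzero} constant-coefficient differential operator $M=\sum_i\alpha_i\partial^i$ sends a fixed rational function $\rho$ (namely $f$, resp.\ $\partial(a)/a$) into a $\tau$-boundary, $M(\rho)=\tau(g)-g$ with $g\in C(t)$.

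\textbf{Collapsing to rationality over $C(t)$.} Here I would carry out an elementary (if slightly delicate) Abramov-style pole analysis, uniform in the order $n$: since $\partial$ commutes with $\tau-1$ and is, together with each factor $\partial-\mu$ ($\mu\in C^\ast$), bijective on $C(t)$, one shows that $M(\rho)\in\mathrm{image}(\tau-1)$ implies $\rho\in\mathrm{image}(\tau-1)$, i.e.\ $\rho=\tau(c)-c$ for some $c\in C(t)$. For $a=1$ this reads $f=\tau(h)-h$ with $h\in C(t)$, so $w-h$ is $\tau$-invariant in $C((t))$, hence constant, and $w\in C(t)$. For $a\neq1$ it reads $\partial(a)/a=\tau(c)-c$; then a solution $\zeta$ of $\partial(\zeta)/\zeta=c$, renormalized by a factor $\mu^{1/t}$, satisfies $\tau(\zeta)=a\zeta$ with $\partial(\zeta)/\zeta\in C(t)$, so $\zeta$ is hyperexponential (in particular D-finite) over $C(t)$; passing to the $\tau\partial$-field $\K(\zeta)$, the quotient $w/\zeta$ obeys $\tau(y)=y+\tilde f$ with $\tilde f\in C(t)$, which is the case $a=1$, and one concludes $w\in C(t)$. (The degenerate subcases — $a$ constant, and $a$ for which $\tau(z)=az$ has a solution in $C(t)$ — fit the same pattern, worked over $\K(a^{1/t})$, resp.\ after dividing by that rational solution.) In all cases $w\in C(t)$, the desired contradiction, so $w$ is differentially transcendental over $\K$.

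\textbf{Where the difficulty lies.} The conceptual novelty is the rationality lemma, but it is short. The technical heart of the proof is the pole bookkeeping of the third step: collapsing a summability relation of \emph{arbitrary} order $n$ to one of order $0$, and then to outright rationality, together with the careful treatment of the multiplicative (homogeneous) equation and of its degenerate solvable cases. This is essentially the $C(t)$-level statement that refines the order-$1$ instance of Theorem~\ref{thm:ADH}, and it has to be re-established here rather than quoted, since the present theorem contains it.
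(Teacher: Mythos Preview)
Your overall architecture mirrors the paper's (handle $a=1$, reduce the homogeneous case to it via the logarithmic derivative, combine via Theorem~\ref{thm:mainTHM}), and your rationality lemma is a correct, clean repackaging of the analytic step the paper weaves into its Lemma~\ref{lemma:inhomogenouus-ADH++}. But two points in your ``collapsing'' step are genuine gaps.

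First, the claim that $\partial$ and each $\partial-\mu$ are bijective on $C(t)$ is false. For $\mu=0$, the kernel of $\partial=t^2\,d/dt$ contains $C$, and $t$ is not in the image (since $t^2g'=t$ forces $g=\log t$); for $\mu\neq0$, $\partial-\mu$ is injective but not surjective on $C(t)$ (in the coordinate $s=1/t$ the equation $-g'-\mu g=1/(s-\alpha)$ leads to an exponential integral). So you cannot push $M^{-1}$ through $\tau-1$ by factoring. What is actually needed, and what the paper does, is the pole-orbit analysis: normalize $f$ (by adding $\tau(r)-r$, $r\in C(t)$) to have at most one pole per $\tau$-orbit, note that $\partial$ preserves pole locations, and argue that $\tau(g)-g$ must exhibit at least two poles on any orbit where it has one; this forces $f$ down to a Laurent-polynomial case, finished by a direct coefficient recursion. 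Your ``Abramov-style'' slogan points at the right idea, but the factorization shortcut does not work. Second, your reduction of $a\neq1$ to $a=1$ is broken: your $\zeta$ is hyperexponential, not rational, so $\tilde f=f/(a\zeta)\notin C(t)$ and your $a=1$ statement (which needs the inhomogeneity in $\K_0$) does not apply. The paper's Lemma~\ref{lemma:homogenouus-ADH++} instead shows, via a residue argument on $b=\partial(z)/z\in C(t)$, that D-algebraicity of any solution of $\tau(y)=ay$ forces a \emph{rational} solution $z_0\in C(t)$; only then can one divide by $z_0$ and legitimately land in the $a=1$ case over $C(t)$. That residue step (going from $\partial(a)/a=\tau(c)-c$ with $c\in C(t)$ to $z_0\in C(t)$) is absent from your sketch.
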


A simple rational change of variable shows the following corollary: 

\begin{corollary}\label{cor:ADH++}
The theorem above holds if we replace $\tau$ with the endomorphism associated 
to any homography with only one fixed point $t_0$, 
$\K$ with the field of germs of meromorphic functions at $t_0$ 
and $F$ with the field of 
formal Laurent series based at $t_0$. 
\end{corollary}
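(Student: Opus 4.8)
The plan is to deduce the corollary from Theorem~\ref{thm:ADH++} by a M\"obius change of variable, which is the kind of reduction already alluded to after Theorem~\ref{thm:main-intro}. A homography with a single fixed point $t_0$ is parabolic, and I would begin by recording the following sharp form of the conjugacy of parabolic homographies: there exists a homography $\psi$ with $\psi(t_0)=0$ and $\psi\circ h\circ\psi^{-1}=\tau$, where $\tau\colon t\mapsto t/(t+1)$ is the standard map of the paper. Indeed, pick any homography $\psi_0$ sending $t_0$ to $0$; then $\psi_0\circ h\circ\psi_0^{-1}$ is a parabolic homography with fixed point $0$, hence equals $t\mapsto t/(ct+1)$ for some $c\in C\setminus\{0\}$; post-composing $\psi_0$ with the dilation $t\mapsto ct$ (which still sends $t_0$ to $0$) turns that map into $\tau$ and yields the desired $\psi$.

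Next I would transport the whole setup through the substitution $u=\psi(t)$. Since $\psi(t_0)=0\neq\infty$, the homography $\psi$ is holomorphic at $t_0$ with nonvanishing derivative there, so $g(t)\mapsto g(\psi^{-1}(u))$ defines an isomorphism $\Psi$ carrying the field of germs of meromorphic functions at $t_0$ onto $C(\{u\})$, the field of formal Laurent power series based at $t_0$ onto $C((u))$, and $C(t)$ onto the field $C(u)$ of rational functions in $u$. The normalization $\psi\circ h\circ\psi^{-1}=\tau$ says precisely that $\Psi$ intertwines the endomorphism $g\mapsto g\circ h$ with $\tau$. Hence, if $w$ belongs to the field of formal Laurent power series based at $t_0$, is not rational, and satisfies $w\circ h=aw+f$ with $a,f\in C(t)\setminus\{0\}$, then $\widetilde w:=\Psi(w)\in C((u))\setminus C(u)$ satisfies $\tau(\widetilde w)=\widetilde a\,\widetilde w+\widetilde f$ with $\widetilde a:=\Psi(a)$ and $\widetilde f:=\Psi(f)$ in $C(u)\setminus\{0\}$, so Theorem~\ref{thm:ADH++} shows that $\widetilde w$ is differentially transcendental over $C(\{u\})$.

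It remains to push this conclusion back to $w$. As $\psi$ and $\psi^{-1}$ are rational, the chain rule expresses the successive $t$-derivatives of $w$ as polynomials in the $u$-derivatives of $\widetilde w$ with coefficients in $C(u)$, and conversely; combined with the isomorphism $\Psi$ on the base fields, this shows that $w$ is differentially algebraic over the germ field at $t_0$ if and only if $\widetilde w$ is differentially algebraic over $C(\{u\})$. Since the latter fails, $w$ is differentially transcendental over the germ field at $t_0$, as claimed.

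I do not expect any real obstacle beyond this bookkeeping. The one hypothesis that genuinely must be used is that $h$ has exactly one fixed point: a homography with two fixed points is instead conjugate to a dilation $t\mapsto qt$ (the case $S_q$ in the terminology of~\cite{Adamczewski-Dreyfus-Hardouin}), to which Theorem~\ref{thm:ADH++} does not apply. One should also take $\psi$ regular at $t_0$, which is automatic once $\psi(t_0)=0$, so that passing to $u=\psi(t)$ really does preserve the notions of meromorphic germ and of formal Laurent power series based at the fixed point.
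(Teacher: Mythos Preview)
Your proposal is correct and follows exactly the approach the paper indicates: the paper's entire proof is the single sentence ``A simple rational change of variable shows the following corollary,'' and you have carefully unpacked precisely that change of variable (conjugating the parabolic homography $h$ to the standard $\tau$ via a M\"obius map $\psi$ with $\psi(t_0)=0$) and verified that it transports the rational coefficients, the formal Laurent series, the germ field, and the notion of differential transcendence as required.
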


Theorem~\ref{thm:ADH++} above implicitly says that  
any solution $w$ of $\tau(w)=aw+f$, meromorphic in a punctured neighborhood of $0$, is the Taylor expansion of a rational function. 
We start by proving this fact:

\begin{lemma}\label{lemma:exclude-K}
Let $a,f\in\K_0$ with $a\neq 0$,
and let $w\in\K$ satisfy the difference equation $\tau(w)=aw+f$.
Then $w\in\K_0$.
\end{lemma}

\begin{proof}
Notice that if $\tau(w)=aw+f$, then $\tau^2(w)=\tau(a)aw+\l(\tau(a)f+\tau(f)\r)$, with 
$\tau(a)a,\tau(a)f+\tau(f)\in\K_0$. By induction, one proves that 
for any $n\geq 2$, $w$ satisfies a functional equation of the form $\tau^n(w)=a_nw+f_n$, 
with $a_n\coloneqq  \tau^{n-1}(a)\cdots\tau(a)a\in\K_0$ and $f_n\in\K_0$
{defined recursively by $f_n=\tau(a_{n-1})f+\tau(f_{n-1})$}
\par
By assumption, $w$ is analytic in an open punctured disk of radius $r$ centered at $0$.
Let $t_0\in C\setminus\{-\frac{1}{k},~k\in\Z_{\geq 1}\}$ be outside such a disk, i.e., $|t_0|\geq r$.
Then $\tau^n(t_0)=\frac{t_0}{1+nt_0}$, for any $n\geq 1$. Therefore 
{$\tau^n(t_0)$} 
tends to $0$ as $n$ tends to $+\infty$. 
This means that there exists $n$ such that $|\tau^n(t_0)|<r$, so that the functional equation 
$\tau^n(w)=a_nw+f_n$ allows to continue $w$ to a meromorphic function in a neighborhood of $t_0$. 
If $t_0=-\frac{1}{k}$, for some positive integer $k$, then $\tau^{-n}\left(-\frac{1}{k}\right)=-\frac{1}{n+k}$, which also tends to $0$. 
Applying $\tau^{-1}$ recursively to $\tau(w)=aw+f$, 
we obtain a functional equation that allows to continue $w$ in a neighborhood of $t_0$. 
Summarizing, $w$ is actually meromorphic on the 
{whole of $C$}. 
In particular $w$ is meromorphic in a neighborhood of $-1$, with $\tau(-1)=\infty$, 
therefore the initial functional equation $\tau(w)=aw+f$ allows to continue 
$w$ to a meromorphic function in a neighborhood of $\infty$. 
Finally $w$ can be continued to a meromorphic function on $C\cup\{\infty\}$, 
hence it is actually a rational function. 
\end{proof}

The following corollary explains the strategy of the proof:

\begin{corollary}\label{cor:exclude-K}
Theorem~\ref{thm:ADH++} is equivalent to the following statement:
\begin{enumerate}[label=({\Alph{*}}),ref=(\textit{\Alph{*}})]\setcounter{enumi}{19}
    \item\label{property-T}Let $a,f\in\K_0$ with $a\neq 0$,
and let $w\in\F\setminus\K$ satisfy the difference equation $\tau(w)=aw+f$.
Then $w$ is differentially transcendental over $\K$.   
\end{enumerate}
\end{corollary}

\begin{proof}
If $w$ satisfies the hypotheses of \ref{property-T}, then Theorem~\ref{thm:ADH++} implies that $w$ is 
differentially transcendental over $\K$. Therefore \ref{property-T} is a consequence of Theorem~\ref{thm:ADH++}.
\par 
On the other hand, let $w\in\F\setminus\K_0$ as in Theorem~\ref{thm:ADH++}. If $w\in\K$, then $w\in\K_0$, thanks to Lemma~\ref{lemma:exclude-K}, 
which contradicts the hypotheses of Theorem~\ref{thm:ADH++}, hence $w\in\F\setminus\K$.
Statement \ref{property-T} implies that $w$ is differentially transcendental over $\K$.
This proves that \ref{property-T} implies Theorem~\ref{thm:ADH++}. 
\end{proof}

Thanks to the statement \ref{property-T}, the strategy of the proof is becoming clear: if the equation
$\tau(y)=ay$ has a differentially transcendental solution, one applies Theorem~\ref{thm:mainTHM};
otherwise we need to look closer to the structure of the
space of solutions of $\tau(y)=ay$.
The following couple of lemmas are the main steps in the proof of Theorem~\ref{thm:ADH++}.

\begin{lemma}
\label{lemma:constat-a-PV}
Let 
{$a\in C\setminus\{0,1,\hbox{roots of unity}\}$}
and let $F_a$ be a $\K$-algebra as in Notation
\ref{notation:algebraF} containing a solution $z$ of $\tau(y)=ay$, minimal for the inclusion.
Then $z$ satisfies a differential equation of the form $\partial(z)=cz$, 
{for some $c\in C$},
and $F_a$ is a field of the form $F_a=\K(z)$, 
with $z$ transcendental over $\K$.
\end{lemma}

\begin{remark}
In Lemma~\ref{lemma:constat-a-PV} 
we may of course take $F_a=\F$,
but such a result would not be enough to prove Theorem~\ref{thm:ADH++}. 
Indeed, generically one cannot expect that both the functional equations $\tau(w)=aw+b$ and $\tau(z)=az$   
have a solution in $\F$. Most examples, such as the generating function of the Bell numbers, actually do not.
\par
We point out that if $a=1$, then $F_a=\K$, since $z\in C$. 
\end{remark}

\begin{proof}[Proof of Lemma~\ref{lemma:constat-a-PV}.]
{It follows from}
Proposition~\ref{prop:totalPVring} that $F_a$ is generated by $z$ over $\K$ and that $F_a$ 
is unique up to a $\K$-algebra isomorphism commuting with $\tau$. 
Since $\partial$ and $\tau$ commute, we have  $\tau(\partial(z))=a\partial(z)$. Therefore $z$ 
and $\partial(z)$ are solutions to the same functional equation and $\partial(z)/z=c\in C$.
\par
{We want to show that $z$ is transcendental over $\K$, by contradiction. 
Following Example~\ref{exa:rank1-homogeneousBIS}, if $z$ is algebraic over $\K$ then 
there exists a positive integer $N$ such that $z^N\in\K$. It follows from 
Lemma~\ref{lemma:exclude-K} that we actually have $z^N\in\K_0$, with $\tau(z^N)=a^Nz^N$.
Since $a\not\in\{0,1,\hbox{roots of unity}\}$, iterating the functional equation $\tau(z^N)=a^Nz^N$ one shows that
if $z^N$ has a non-zero pole (resp.\ zero), then 
it has an infinite number of poles (resp.\ zeros).
Therefore $z^N$ has a pole or a zero only at $0$ and it must be a monomial. 
If $z^N$ is a monomial, then $a=\tau(z^N)z^{-N}$ is an integer power of $(1+t)$ and cannot be a constant, unless 
$z^N$ is a constant itself, i.e., $N=0$ and $a=1$. We have found a contradiction, hence $z$ is transcendental.
Thanks to Example~\ref{exa:rank1-homogeneousBIS}, 
we conclude that the Picard-Vessiot ring of $\tau(z)=az$ is a domain and hence that $F_a=\K(z)$ is a field.}
\end{proof}

\begin{exa}\label{exa:exponentials}
{Let $a\in C\setminus\{0,1,\hbox{roots of unity}\}$.
Let us fix a branch of the complex logarithm, so that 
$\exp\left(-\frac{\log a}{t}\right)$ is a solution to $\tau(y)=ay$. 
The field 
$\K\left(\exp\left(-\frac{\log a}{t}\right)\right)$  is a total Picard-Vessiot ring for $\tau(y)=ay$ over $\K$.
Notice that $\exp\left(-\frac{\log a}{t}\right)$ is not meromorphic at $0$, 
hence its expansion does not belong to $\F$.
On can show by a direct substitution that in this case $\tau(y)=ay$ cannot have a solution in $\F$.
In other words, the hypothesis of Theorem~\ref{thm:ADH++} with $f=0$ is never satisfied.}
\end{exa}

The proof of the following lemma is an elaborated version of the argument of \cref{exa:Bernoulli}:

 \begin{lemma}\label{lemma:inhomogenouus-ADH++}
 Let $a\in C$, $f\in \K_0$, $a\neq 0$, and let $w\in\F\setminus \K_0$ satisfy $\tau(w)=a w+f$.
 Then $w$ is differentially transcendental over $\K$.
 \end{lemma}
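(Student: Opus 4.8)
The strategy is to invoke Proposition~\ref{prop:D-transcendecy-rank1-inhomo} with the base field $\K=C(\{t\})$ and the ambient $\K$-algebra $F$ taken to be a suitable ring of functions containing the formal solution $w\in C((t))$ together with all its $\partial$-derivatives. Concretely, suppose for contradiction that $w$ is differentially algebraic over $\K=C(\{t\})$. By Proposition~\ref{prop:D-transcendecy-rank1-inhomo} (applied with the algebra $F$ chosen so that it contains $w$ and $F^\tau=C$, which is legitimate since $C$ is algebraically closed), there exist a non-negative integer $n$, constants $\a_0,\dots,\a_n\in C$ not all zero, and a germ $g\in\K=C(\{t\})$ such that
\begin{equation*}
\a_0 f+\a_1\partial(f)+\dots+\a_n\partial^n(f)=\tau(g)-g.
\end{equation*}
The left-hand side is a rational function in $\K_0=C(t)$: indeed $f\in C(t)$, the derivation $\partial=t^2\frac{d}{dt}$ preserves $C(t)$, so the entire left-hand side lies in $C(t)$. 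Call it $h\in C(t)$. Thus the whole game is reduced to showing that the telescoping equation $\tau(g)-g=h$ has \emph{no} solution $g$ in $C(\{t\})$ unless $h=0$, while on the other hand the left-hand side $h$ is forced to be non-zero for at least one choice of the $\a_i$ --- no wait, the point is subtler: we must rule out the equation for \emph{all} choices of $(\a_i)$.

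\textbf{The rationality reduction.} The key observation is the discrepancy between the two natural function fields: $g$ is only assumed to lie in $C(\{t\})$ (germs of meromorphic functions at $0$), whereas $h\in C(t)$ is genuinely rational. The first substantive step is therefore a ``forced rationality'' lemma: if $g\in C(\{t\})$ satisfies $\tau(g)-g=h$ with $h\in C(t)$, then in fact $g\in C(t)$. This is plausible because $\tau:f(t)\mapsto f(t/(t+1))$ is an automorphism of $C(\{t\})$ and of $C(t)$, and the orbit structure of $\tau$ on $\mathbb{P}^1$ (the homography $t\mapsto t/(t+1)$ has $0$ as its unique fixed point, with all other points lying on infinite orbits accumulating only at $0$) should let one propagate the rationality of $h$ to $g$: roughly, $g$ and $\tau^k(g)$ differ by a rational function for every $k$, and a meromorphic germ at $0$ that is rational up to $\tau$-translation must itself be rational. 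Alternatively and more cleanly, one can apply the already-available first-order special case: here is where Lemma~\ref{lemma:inhomogenouus-ADH++}, stated just above as ``the theorem for $a=1$'', is meant to be proven --- and indeed the structure of the excerpt indicates that the present lemma \emph{is} the $a=1$ case, so the proof must be self-contained rather than citing itself. So the real content is precisely this rationality-propagation argument, carried out by hand on germs of meromorphic functions at $0$.

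\textbf{Carrying it out.} Concretely I would argue as follows. From $\tau(g)-g=h$, iterate to get $\tau^N(g)-g=\sum_{k=0}^{N-1}\tau^k(h)$. Now $\tau^k(t)=\frac{t}{1+kt}$, so $\tau^k(h)$ is the rational function obtained from $h$ by this substitution; each such function is holomorphic at $0$ (it vanishes there to the same order as $h$, since the substitution fixes $0$), and one checks the series $\sum_{k\ge 0}\tau^k(h)$ converges in $C(\{t\})$ --- this requires $h$ to have positive valuation at $0$, i.e.\ $h(0)=0$, which one first arranges or else derives a contradiction. Hence, letting $N\to\infty$ along the convergent sum, $g(t)=g(0)-\sum_{k\ge 0}\tau^k(h)$ is \emph{determined} by $h$; comparing with the case $h=0$ shows that a solution exists in $C(\{t\})$ precisely when this sum is a rational function, and a direct analysis of poles (the poles of $\tau^k(h)$ march off to $0$ as $k\to\infty$ unless $h$ is a polynomial in suitable local coordinate, in which case the sum is again forced to be rational or divergent) forces $h=0$. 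Feeding $h=\a_0 f+\dots+\a_n\partial^n(f)=0$ back, one gets a nontrivial linear $\partial$-relation over $C$ among $f,\partial(f),\dots,\partial^n(f)$, i.e.\ $f$ itself is $\partial$-algebraic of a very special (Liouvillian/rational) type; but then $f\in C(t)\setminus\{0\}$ satisfying such a relation, combined with $\tau(w)=w+f$ and the telescoping, actually gives $w\in C(t)$, contradicting $w\in\F\setminus\K_0$.

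\textbf{Main obstacle.} The delicate point --- and I expect it to be the crux --- is the pole/convergence analysis showing that $\tau(g)-g=h$ with $h\in C(t)$ nonzero is \emph{unsolvable} in $C(\{t\})$, more precisely that it is solvable there only when it is already solvable in $C(t)$. One must handle carefully: (i) the contribution of the pole of $h$ at $t=-1$ (the image of $\infty$ under the homography) and its entire forward $\tau$-orbit $\{t:1+kt=0\}$, whose accumulation point is $0$ --- this is exactly why a genuine meromorphic germ at $0$ cannot absorb such a sum; (ii) the possibility that $h$ has a pole at $0$ itself (handled by first reducing to $h(0)=0$, since otherwise $\tau(g)-g$ cannot match a $1/t$-type singularity as $\tau$ fixes $0$); and (iii) the edge case where $h$ is (the $\tau$-pullback of) a polynomial, where the telescoping sum is a finite-type object and one argues directly. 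This is the same flavour of ``reasoning on the poles of the potential rational functions $g$'' that the authors use in Example~\ref{exa:Bell-homogeneous}, but it must now be done for general $h$ and over $C(\{t\})$ rather than $C(t)$; the extra subtlety over $C(\{t\})$ is that one cannot simply count poles globally but must exploit the accumulation of the $\tau$-orbit at the fixed point $0$.
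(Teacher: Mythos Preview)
Your overall strategy---invoke Proposition~\ref{prop:D-transcendecy-rank1-inhomo} and then analyze the telescoper equation $\tau(g)-g=h$ with $h\in C(t)$ and $g\in C(\{t\})$ via the orbit structure of $\tau$---is the paper's strategy too, and you correctly single out the crux: $\tau$-orbits in $C^*$ accumulate only at $0$, which is incompatible with $g$ being merely meromorphic there. But the execution has real gaps.

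First, the telescoping sum $\sum_{k\ge 0}\tau^k(h)$ does not converge in $C(\{t\})$ for general $h\in C(t)$: if $h$ has a pole at some $t_0\in C^*$, then $\tau^k(h)$ has a pole at $\tau^{-k}(t_0)=t_0/(1-kt_0)\to 0$, so the partial sums acquire poles accumulating at $0$ and have no meromorphic limit. (Also $\tau^N(g)\to g(0)$ fails when $g$ has a pole at~$0$, which you cannot exclude.) This accumulation phenomenon is indeed the heart of the argument, but it does \emph{not} yield $h=0$: it only tells you that $h$---and hence $f$---can have no poles in $C^*$. The equation $\tau(g)-g=h$ is solvable in $C(\{t\})$ for plenty of nonzero $h$ (any $h=\tau(r)-r$ with $r\in C(t)$), so ``forces $h=0$'' is simply wrong as stated.

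What the paper does to make the pole argument go through is a preliminary \emph{normalization} that you omit: one first replaces $w$ by $w+r$ for suitable $r\in C(t)$, so that the new $f$ has at most one pole in each $\tau$-orbit (and stays nonzero since $w\notin\K_0$). Then $h=\sum_i\alpha_i\partial^i(f)$ inherits this one-pole-per-orbit property, and the orbit-accumulation argument becomes a clean dichotomy: either $g$ is analytic on all of $C^*$, or one finds an orbit in which $\tau(g)-g$ has at least two singularities---contradiction. This forces $f$ to be a Laurent polynomial. A second normalization (using $\tau(t^{-m})-t^{-m}\in C[t^{-1}]$) reduces $f$ to a nonzero element of $tC[t]$; a direct coefficient recursion then shows $g=0$, hence $h=0$, hence (since the $\partial^i(f)$ have pairwise distinct $t$-valuations) all $\alpha_i=0$---the desired contradiction.

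Your final step (``$h=0$ \dots\ gives $w\in C(t)$'') is salvageable: $h=0$ implies $\sum_i\alpha_i\partial^i(w)\in C$, a constant-coefficient linear ODE in the variable $s=-1/t$ whose only solutions in $C((1/s))$ are polynomials in $s$, hence rational in $t$. But as written it is an unsupported assertion, and the paper sidesteps it entirely by reaching the contradiction at the level of the $\alpha_i$ after normalization.
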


\begin{proof}
Before starting the actual proof we show that we can make some extra assumptions without loss of generality:
\begin{enumerate}
    \item\label{itone}
{If $a$ is a root of unity, we can assume that $a=1$. In fact, if $a$ is a root of unity of order $N$, 
we can proceed as in the proof of Lemma~\ref{lemma:exclude-K} and consider the iterated functional equation
$\tau^N(w)=a^Nw+f_N=w+f_N$. 
Recall that $\tau^N(t)=\frac{t}{1+Nt}$, by~\eqref{eq:iter}.
If we set $\widetilde t:=Nt$, then $\tau^N(\widetilde t)=\frac{\widetilde t}{1+\widetilde t}$, 
therefore we have an equation of the form $\tau(w)=w+f_N$ in the variable $\widetilde t$. 
Notice that neither the iteration of the initial functional equation nor the change of variable 
that we have performed change the nature of the solutions $w$.}
    \item\label{ittwo}
We may assume that $f$ does not have a pole at $\infty$. Indeed, we can replace $w$ by $\tau^n(w)$, which satisfies the 
functional equation $\tau(\tau^n(w))=a\tau^n(w)+\tau^n(f)$.
If $f$ has a pole at $\infty$, then $\tau^n(f)=f\left(\frac{t}{1+nt}\r)$ has a pole at $t=-1/n$, and by  conveniently choosing~$n$, it does not have a pole at $\infty$. 
    \item\label{itthree}
{Finally, we can assume that $f$ has only one pole in each $\tau$-orbit (i.e., in each set of the form $\{\tau^n(\alpha):n\in \mathbb Z\}$, for $\alpha\in C$), that does not contain~$0$. 
}
Indeed, notice that we can replace $w$ by $w+r$ for any $r\in\K_0$, and change the equation accordingly, 
since
this will change neither the hypotheses  nor the conclusion of the lemma.
Indeed, we have:
    \[
    \tau(w+r)=a(w+r)+(f+\tau(r)-ar).
    \]
We remark that the element $w+r$ of $\F$ cannot be in $C$, since $w\not\in\K_0$, therefore $f+\tau(r)-ar$ cannot be $0$.
With this in mind, one sees that for any $\a\in C$, 
$\a\neq 0,1$, and any $m\in\Z$ we have:
\begin{equation*}
   \tau\l(\frac{1}{(t-\a)^m}\r)-\frac{a}{(t-\a)^m}=\frac{(1+t)^m}{((1-\a)t-\a)^m}-\frac{a}{(t-\a)^m}.
\end{equation*}
Notice that $\tau\l(\frac{\a}{1-\a}\r)=\a$, hence the poles of the right-hand side of the expression above are on the same $\tau$-orbit. 
Therefore, replacing $w$ by $w+r$, where $r$ is a rational function,  
we can ``shift the poles $\neq 0,1$ of $f$ along their $\tau$-orbit".
The poles of the form $\frac{1}{n}$, for any integer $n\geq 2$, can all be moved 
to the pole $1$, since $\tau(\frac{1}{n})=\frac{1}{n-1}$.  
We conclude that, adding a convenient rational function to $w$,
we can suppose that $f$ has only one pole in each $\tau$-orbit, 
that does not contain $0$. 
We point out that the change of unknown function that we performed does not change the fact that $f$ 
does not have a pole at $\infty$.
\end{enumerate}
{Finally, if $f=0$ and $a\neq 1$, the functional equation $\tau(y)=ay$ cannot have a solution 
$w\in\F$, as noticed in Example~\ref{exa:exponentials}. On the other hand, if $a=1$, the solution $w$ is a constant, 
hence $w\in\K_0$.}
\par 
{From now on we suppose that $f\neq 0$. Under the assumptions \ref{itone}, \ref{ittwo} and \ref{itthree} above,
we now assume by contradiction that $w\in\F\setminus\K_0$ is differentially algebraic over $\K$.}
Let us consider the total ring of fractions $\F_a$ of the $\partial$-Picard-Vessiot ring of $\tau(y)=ay$ over the field 
$\F$ (see Proposition~\ref{prop:existence of PV-Wibmer}). 
Then $\F_a$ contains a copy of the total Picard-Vessiot ring $F_a$ of $\tau(y)=ay$ over $\K$, which is a field, by 
{Lemma~\ref{lemma:constat-a-PV} above.}
Since $w$ is differentially algebraic over $\K$, 
it is differentially algebraic over $F_a$ and  
we can consider the functional equation 
$\tau\left(\frac{w}{z}\right)=\frac{w}{z}+\frac{f}{z}$, where $z\in F_a$ satisfies $\tau(z)=az$. 
It follows from
Proposition~\ref{prop:rank1homogeneous-telescopers} that there exist an integer $n\geq 0$,  
and elements $\alpha_0,\dots,\alpha_n\in C$ with $\alpha_n\neq 0$, and $g\in F_a$, such that 
    \[
    \a_0\frac{f}{z}+\alpha_1\partial\left(\frac{f}{z}\right)+\dots+\alpha_n\partial^n\left(\frac{f}{z}\right)=
    \tau\left(\frac{g}{z}\right)-\frac{g}{z}. 
    \]
Since $\partial(z)=cz$ for some $c\in C$, we have $\partial\left(\frac{f}{z}\right)=\frac{\partial(f)}{z}-c\frac{f}{z}$, 
with $c\neq 0$ if $a\neq 1$ and $c=0$ otherwise. 
Calculating recursively $\partial^i\left(\frac{f}{z}\right)$, 
one proves that there exist $\beta_0,\dots,\beta_n\in C$, with $\beta_n=\alpha_n\neq 0$, such that 
    \[
    \beta_0f+\beta_1\partial(f)+\dots+\beta_n\partial^n(f)=\tau(g)-ag.
    \]
If $a=1$, then $g\in F_a=\K$. 
If $a\neq 1$, then $z$ is transcendental over $\K$ and $g\in F_a=\K(z)\subset\K((z))$, therefore we can write 
$g$ as a Laurent series $g=\sum_ig_iz^i\in\K((z))$. 
Plugging $g$ in the identity above, we find:
    \[
    \beta_0f+\beta_1\partial(f)+\dots+\beta_n\partial^n(f)=\sum_i\left(\tau(g_i)a^i-ag_i\right)z^i.
    \]
Since the left-hand side of such an identity is in $\K_0$, the right-hand side must be in $\K_0$ too, 
hence $g=g_0\in\K$.
Summarizing, both for $a=1$ and $a\neq 1$, we have 
$\beta_0f+\beta_1\partial(f)+\dots+\beta_n\partial^n(f)=\tau(g)-ag$, for some $g\in\K$.
Notice that $\beta_0f+\beta_1\partial\l(f\r) +\dots+\beta_n\partial^n\l(f\r)$ has the same non-zero poles as $f$, 
therefore it has at most one pole per non-zero orbit.
\par 
By assumption, $g$ is analytic in a punctured disk around zero.
Let us suppose by contradiction that there exists a singularity
$t_0\in C^\ast$ on the border of the domain of analyticity of $g$, i.e.\ that $g$ is not analytic on $C^\ast$.
We notice that $\tau^{-m}(t_0)=\frac{t_0}{1-mt_0}$ for any $m\in\Z$, $m\geq 0$, 
therefore the orbit of $t_0$ has an accumulation point at~$0$ as $m\to\infty$.
Since $g$ is analytic in the open punctured disk of center $0$ and radius $|t_0|$, 
possibly replacing $t_0$ by another singularity in its orbit,   
we can suppose that, for any positive integer $m$, no $\tau^{-m}(t_0)$ is a singularity of $g$. 
Then $t_0$ is a singularity of $g$ but not of $\tau(g)$, 
while $\tau^{-1}(t_0)$ is a singularity of $\tau(g)$ but not of~$g$, therefore  
$\tau(g)-g$ is forced to have a singularity both at $t_0$ and at $\tau^{-1}(t_0)$, in contradiction with the fact that $f$ has at most one 
singularity in each $\tau$-orbit.
We conclude that the domain of analyticity of $g$ is the whole~$C^\ast$ and that $f$ cannot have any pole other than $0$ and $\infty$.
In other words, $f\in C[t,t^{-1}]$ and $g$ is analytic over~$C^\ast$, with a pole at $0$.
\par
We now notice that for any integer $m\geq 1$, we have
\[
\tau\l(\frac{1}{t^m}\r)-\frac{a}{t^m}=\frac{1-a}{t^m}+
\sum_{j=0}^{m-1}\binom{m}{j}\frac{1}{t^j}.
\]
This means that replacing $w$ by $w+r$ for a convenient choice of $r\in\K_0$,
we can replace $f$ by $f+\tau(r)-ar$ and assume that $f$ is a non-zero polynomial in $t$ without constant coefficient.
We are finally reduced to an $f\in tC[t]$, which implies that $f=0$, since we have supposed that $f$ does not have any pole at $\infty$.
As we have noticed at the beginning, we can never obtain a zero inhomogeneous term in the functional equation by adding a rational function $r$ to $w$, 
therefore we have found a contradiction. This means that $w$ cannot be differentially algebraic over $\K$, unless it is rational.
\end{proof}

We have finished the core of the proof. It only remains to put the pieces together.  

\begin{lemma}\label{lemma:homogenouus-ADH++}
{{(\cite[Prop.~3.9]{HardouinSinger})}
}
Let $a\in\K_0$, $a\neq 0$, and let $F_a$ be a $\K$-algebra as in Notation
 \ref{notation:algebraF} containing a solution $z$ of $\tau(y)=ay$.
Then either $z$ is differentially algebraic over $\K_0$ and there exist $a^*\in C$ and $b\in\K_0$, 
{both non-zero}
such that $a=a^*\frac{\tau(b)}{b}$,
or the solution $z$  (and hence any solution)  of $\tau(y)=ay$ is differentially transcendental over $\K$.
\end{lemma}

{Example~\ref{exa:exponentials} above shows that, if $z$ is differentially algebraic, it is essentially an exponential function. The following example illustrates the case when $z$ is differentially transcendental.}

\begin{exa}
We set $C=\C$ and we go back to Example~\ref{exa:Bell-homogeneous}. 
Since we know that the gamma function is not rational, we
immediately obtain that $\Ga\l(\frac{1}{t}\r)^{-1}$ 
is strongly differentially transcendental, being a solution of  
$\tau(y)=ty$.
Notice that the statement makes sense since $\Ga\l(\frac{1}{t}\r)^{-1}$ 
is analytic in any 
punctured disk around $0$, hence we can consider the $\K$-field of functions $\K\l(\Ga\l(\frac{1}{t}\r)^{-1}\r)$.
\end{exa}

\begin{proof}[Proof of Lemma~\ref{lemma:homogenouus-ADH++}]
If $z$ is differentially algebraic over $\K$, so is $\partial(z)/z$, which is solution of $\tau(y)=y+\partial(a)/a$.
It follows from Lemma~\ref{lemma:inhomogenouus-ADH++}
that $\partial(z)/z$ is differentially algebraic over $\K$ if and only if
$v\coloneqq  \partial(z)/z\in\K_0$. We have $\partial(a)/a=\tau(v)-v$.
Since both $a$ and $v$ are rational functions, we make the change of variable $s=\frac{1}{t}$, so that 
$\tau(s)=s+1$ and $\partial=-\frac{d}{ds}$.
As we have remarked in Example~\ref{exa:Bernoulli}, if $v$, as a function of the variable $s$, 
has a pole in $C$, then $\tau(v)-v$ must have at least two poles in the same orbit.
In particular, if $v$ has a pole of order greater than $1$, then $\tau(v)-v$ has at least two poles 
of order greater than $1$, while $\frac{\partial(a)}{a}$ has none. Therefore $v$ does not have any finite 
pole of order greater than $1$. Since all the residues of $\frac{\partial(a)}{a}$ are integers, 
the same argument shows that $v$ cannot have any simple pole with a residue in $C\setminus\Z$.
We conclude that $v$ is the sum of a polynomial $p\in C[t]$ and of some terms of the form 
$\frac{m}{s-a}$, with $m\in\Z$ and $a\in C$. Then $\tau(p)-p\in C[t]$, while $\frac{\partial(a)}{a}$ 
is the logarithmic derivative of a rational function. Therefore $\tau(p)-p=0$ and $p\in C$.
Since $v$ is determined up to a constant, we can suppose that $p=0$. 
Hence there exists a rational function $b$ such that $v=\frac{\partial(b)}{b}$. 
We conclude that $\tau\l(\frac{\partial(z)}{z}-\frac{\partial(b)}{b}\r)=\frac{\partial(z)}{z}-\frac{\partial(b)}{b}$, for some 
$b\in\K_0$, hence that there exists $a^*\in C$ such that $a=a^*\frac{\tau(b)}{b}$.
\end{proof}

\paragraph*{Proof of Theorem \ref{thm:ADH++}}
Let $F_a$ be the total ring of fractions of the $\partial$-Picard-Vessiot ring of $\tau(y)=ay$ over $\K$, constructed in
Proposition~\ref{prop:existence of PV-Wibmer} and let $z\in F_a$ be a solution of $\tau(y)=ay$.
If $z$ is differentially transcendental over $\K$, then~$w$ is differentially transcendental over $\K$
because of Theorem \ref{thm:mainTHM}, therefore there is nothing more to prove.
If $z\in F_a$ is differentially algebraic over $\K$ then Lemma~\ref{lemma:homogenouus-ADH++} 
implies that there exist $a^*\in C$ and $b\in\K_0$ suh that $a=a^*\frac{\tau(b)}{b}$. 
Therefore $w/b\in\F$
is differentially algebraic over $\K$ and $\tau(w/b)=a^*(w/b)+f/(ab)$. 
Without loss of generality, we can write $w$ for $w/b$, $a$ for $a^*$ and $f$ for $f/(ab)$, 
so that $\tau(w)=aw+f$, with $a\in C\setminus\{0\}$ and $f\in\K_0$. 
This is the situation of Lemma~\ref{lemma:inhomogenouus-ADH++},
which allows to conclude that $w\in\K_0$.
\qed

\section*{References}

\newcommand{\etalchar}[1]{$^{#1}$}
\def\cprime{$'$}

\renewcommand{\notesname}{Remarks for the referees\\
\bfseries N.B. Correction 17 corresponds to the 17-th remark of the report of the first referee. 
The two remarks of the second referee are commented without numbering them.}


\end{document}